%
%
%
%
%

\documentclass[twoside,leqno,symbols-for-thanks]{rmi-adapt}
\usepackage[colorlinks,linkcolor=black,citecolor=black,urlcolor=black, linktocpage=true,dvips]{hyperref}
\usepackage{srcltx}
\usepackage[english]{babel}
\numberwithin{equation}{section}
\setinitialpage{1}
\oddsidemargin=40pt
\evensidemargin=40pt
\topmargin=30pt


\title[Brownian motion on treebolic space]
{Brownian motion on treebolic space:\\
escape to infinity}


\author[A. Bendikov, L. Saloff-Coste, M. Salvatori and W. Woess]
{Alexander Bendikov, Laurent Saloff-Coste,\\ Maura Salvatori and Wolfgang Woess}

\address[bendikov@math.uni.wroc.pl]{{\sc Alexander Bendikov}: 
Institute of Mathematics, Wroclaw University, \\
Pl. Grundwaldzki 2/4, 50-384 Wroclaw, Poland.}

\address[lsc@paris.math.cornell.edu]{{\sc Laurent Saloff-Coste}: 
Department of Mathematics, Cornell University, \\ 
Ithaca, NY 14853, USA.}

\address[maura.salvatori@unimi.it]{{\sc Maura Salvatori}: 
Dipartimento di Matematica, Universit\`a di Milano, \\
Via Saldini 50, 20133 Milano, Italy.}

\address[woess@TUGraz.at]{{\sc Wolfgang Woess}: 
Institut f\"ur Mathematische Strukturtheorie, \\
Technische Universit\"at Graz,
Steyrergasse 30, A-8010 Graz, Austria.}


\amsclassification[60J65, 53C23, 20F65, 05C05]{60J25}


\keywords{tree, hyperbolic plane, horocyclic product, Laplacian, Brownian motion,
rate of escape, central limit theorem, boundary convergence}

%
%

\usepackage{pictex}  

\newcommand\ab{\mathfrak{a}}
\newcommand\Af{\mathcal{A}}
\newcommand\Aff{\operatorname{\sf Aff}}
\newcommand\al{\alpha}
\newcommand\AND{\quad\text{and}\quad}
\newcommand\assim{\stackrel{\text{\rm a.s.}}{\sim}}
\newcommand\asymplaw{\stackrel{\text{\rm in law}}{\asymp}}
\newcommand\Aut{\operatorname{\sf Aut}}
\newcommand\bb{\mathfrak{b}}

\newcommand\bd{\partial}
\newcommand\binfty{\boldsymbol{\infty}}
\newcommand\BS{\operatorname{\sf BS}}

\newcommand\CC{\mathcal C}
\newcommand\cf{\curlywedge}

\newcommand\DC{\mathcal D}
\newcommand\de{\delta}
\newcommand\DL{\mathsf{DL}}
\newcommand\dist{\mathsf{d}}
\newcommand\Dom{\text{\rm Dom}}

\newcommand\ep{\varepsilon}
\newcommand\eqlaw{\stackrel{\text{\rm in law}}{=}}
\newcommand\Ex{\mathsf{E}}
\newcommand\fb{\mathbf f}
\newcommand\ga{\gamma}
\newcommand\geo[1]{\overline{\rl #1}}

\newcommand\gf{\mathfrak{g}}
\newcommand\Hb{{\mathbb H}}

\newcommand\hb{\mathbf h}
\newcommand\hor{\mathfrak{h}}

\newcommand\HT{\operatorname{\sf HT}}

\newcommand\im{\mathfrak{i}\,}
\newcommand\IM{\operatorname{\text{\sl Im}}}
\newcommand\Lap{\Delta}
\newcommand\Lf{\mathsf{L}}
\newcommand\la{\lambda}
\newcommand\LC{\mathcal L}

\newcommand\LH{\operatorname{\sf L}\!\Hb}
\newcommand\LT{\operatorname{\sf LT}}

\newcommand\md{\boldsymbol\delta}
\newcommand\mmu{\boldsymbol{\mu}}
\newcommand\ms{\mathbf m}
\newcommand\N{\mathbb N}
\newcommand\nb{\mathbf{n}}
\newcommand\nn{\mathsf{n}}

\newcommand\of{\mathfrak{o}}
\newcommand\ol{\overline}
\newcommand\om{\varpi}
\newcommand\Prob{\mathsf{Pr}}
\newcommand\pb{\mathbf p}
\newcommand\pp{\mathsf{p}}
\newcommand\qq{\mathsf{q}}
\newcommand\Q{\mathbb{Q}}
\newcommand\R{\mathbb{R}}
\newcommand\RE{\operatorname{\text{\sl Re}}}
\newcommand\rha{\mathsf{a}}

\newcommand\rl{\rule[0pt]{0pt}{7pt}}
\newcommand\RR{\boldsymbol{R}}
\newcommand\scs{\scriptstyle}
\newcommand\Sf{\mathsf{S}}
\newcommand\simlaw{\stackrel{\text{\rm in law}}{\sim}}
\newcommand\Sol{\text{\sf Sol}}
\newcommand\Stab{\text{\sf Stab}}
\newcommand\T{\mathbb T}
\newcommand\tolaw{\stackrel{\text{\rm in law}}{\to}}
\newcommand\ttau{\boldsymbol{\tau}}
\newcommand\uf{\mathfrak{u}}
\newcommand\ul{\underline}
\newcommand\uno{\mathbf{1}}
\newcommand\Var{\mathsf{Var}}
\newcommand\vf{\mathfrak{v}}

\newcommand\wf{\mathfrak{w}}
\newcommand\wh{\widehat}
\newcommand\wt{\widetilde}

\newcommand\XX{\boldsymbol{X}}

\newcommand\zf{\mathfrak{z}}
\newcommand\Z{\mathbb Z}



\newtheoremstyle{mythm}
  {9pt}
  {9pt}
  {\itshape}
  {0pt}
  {\bfseries}
  {}
  { }
  {\thmnumber{(#2)}\thmname{ #1}\thmnote{ #3}}

\newtheoremstyle{mydef}
  {9pt}
  {9pt}
  {\normalfont}
  {0pt}
  {\bfseries}
  {}
  { }
  {\thmnumber{(#2)}\thmname{ #1}\thmnote{ #3}}

\theoremstyle{mythm}
\newtheorem{thm}[equation]{Theorem.}
\newtheorem{pro}[equation]{Proposition.}
\newtheorem{lem}[equation]{Lemma.}
\newtheorem{cor}[equation]{Corollary.}

\theoremstyle{mydef}
\newtheorem{dfn}[equation]{Definition.}

\newtheorem{imp}[equation]{}
\newtheorem{rmk}[equation]{Remark.}


\begin{document}

%
%

\begin{abstract}
Treebolic space is an analog of the {\sf Sol} geometry, namely, it is the
horocylic product of the hyperbolic upper half plane $\Hb$ and the homogeneous tree 
$\T=\T_{\pp}$ with  degree $\pp+1 \ge 3$, the latter seen as a one-complex. Let
$\hor$ be the Busemann function of $\T$ with respect to a fixed boundary point.
Then for real $\qq > 1$ and integer $\pp \ge 2$,  treebolic space $\HT(\qq,\pp)$ 
consists of all pairs $(z=x+\im y,w) \in \Hb \times \T$ with $\hor(w) =  \log_{\qq} y$.
It can also be obtained by glueing together horizontal strips of $\Hb$ in a 
tree-like fashion. We explain the geometry and metric of $\HT$ and exhibit a 
locally compact group of isometries (a horocyclic product of affine groups) that 
acts with compact quotient. When $\qq=\pp$, that group contains the amenable 
Baumslag-Solitar group $\BS(\pp)$ as a co-compact lattice, while when 
$\qq \ne \pp$, it is amenable, but non-unimodular. $\HT(\qq,\pp)$ is a key example
of a strip complex in the sense of \cite{BSSW}.

 Relying on the analysis of strip complexes developed by the same authors 
in \cite{BSSW}, we consider a family of natural Laplacians with ``vertical drift'' 
and describe the associated Brownian motion. The main difficulties come from the singularites
which treebolic space (as any strip complex) has along its bifurcation lines. 

In this first part, we obtain the rate of escape and
a central limit theorem, and describe how Brownian motion converges to the
natural geometric boundary at infinity. Forthcoming work will be dedicated to positive 
harmonic functions. 
\end{abstract}

%
%

\section{Introduction}\label{sec:intro}
Let $\Hb = \{ x + \im y : x \in \R\,,\; y > 0 \}$ be hyperbolic upper half space, and
$\T=\T_{\pp}$ be the homogeneous tree, drawn in such a way that every vertex of
$\T$ has one predecessor and $\pp$ successors.  \emph{Treebolic space} is a Riemannian
2-complex, a \emph{horocyclic product} of  $\Hb$ and $\T$. Let us start with a picture
and an informal description.

Let $1 < \qq\in \R$. Subdivide $\Hb$ into the strips 
$\Sf_k = \{ x+\im y : x \in \R\,,\; \qq^{k-1} \le y \le \qq^{k} \}$, 
where $k \in \Z$. (See Figure~3 further below.)
Each strip is bounded by two horizontal lines of the form
$\Lf_k= \{ x+\im \qq^k : x \in \R \}$, which in hyperbolic geometry are
horocycles with respect to the ``upper'' boundary point  $\binfty$ 
(or rather $\im\infty$).
In treebolic space $\HT(\qq,\pp)$, infinitely many copies of those strips
are glued together in a tree-like fashion: for each $k \in \Z$, the bottom lines of 
$\pp$ copies of $\Sf_{k}$ are identified with each other and with the top line
of a copy of $\Sf_{k-1}$. Thus, every copy of any of the $\Lf_k$ becomes a
\emph{bifurcation line} whose ``side view'' is a vertex $v$ of $\T$ that
can be used to identify the line as $\Lf_v$ (instead of $\Lf_k$). In the same
way, we write $\Sf_v$ for the strip sitting below $\Lf_v$ in our picture.
Each strip is equipped with the standard hyperbolic length element, 
and combining this with the tree metric, one obtains a natural metric on
$\HT(\qq,\pp)$. A more formal description will be given in 
\S \ref{sec:geometry}.

$$
\beginpicture 

\setcoordinatesystem units <0.95mm,0.95mm>

\setplotarea x from -16 to 54, y from 0 to 48

\plot 0 0  40 20  48 36  8 16  0 0  -8 16  -2 28  38 48  36.666 45.333 /

\plot 48 36  54 48  14 28  8 16  2  28  42 48   44.4 43.2 /

\plot 4.8 22.4  -8 16  -14 28  26 48  28.4  43.2 /

\put{$\leftarrow$ copies of $\Sf_{k-1}$} [l] at 46 28
\put{$\leftarrow$ copies of $\Lf_k$} [l] at 50 36
\put{$\leftarrow$ copies of $\Sf_{k}$} [l] at 53 42

\endpicture
$$
\begin{center}
{\sl Figure~1.\/} A finite section of treebolic space, with $\pp=2$.
\footnote{This figure also appears in \cite{BSSW}.}
\end{center}

\medskip


Why is this space interesting$\,$? First of all, it is a key example of a 
\emph{strip complex} in the sense of \cite{BSSW}. Strip complexes are a class
of Riemannian complexes. Laplacians and the associated potential theory 
on Riemannian complexes appear in the book of {\sc Eells and Fuglede}~\cite{EeFu}.
A study of Brownian motion and harmonic functions on Euclidean complexes was 
undertaken by {\sc Brin and Kifer}~\cite{BrKi}. In \cite{BSSW}, the theory of
Laplacians and diffusion on strip complexes, properties of the heat kernel,
etc., were studied in a careful and rigorous way. In this spirit, the present
paper is the first detailed case study of what can be achieved on the basis of  
that theory, which provides a highly non-trivial extension of the very popular subject of
analysis and probability on ``quantum graphs'' (metric graphs) to what one might also 
call ``quantum complexes''. 

Second, treebolic space is a horosphere in the product
space $\Hb \times \T$, where the tree $\T$ is viewed as a one-dimensional complex
in which each edge is a copy of a suitable compact interval.
In other words, it is the \emph{horocyclic product} of $\Hb$ and $\T$. A first
appearance of such a  horocyclic product was that of two trees with 
(integer) branching numbers $\pp$ and $\qq \ge 2$, respectively. This is the
Diestel-Leader graph $\DL(\pp,\qq)$, which for $\pp \ne \qq$ was proposed 
by {\sc Diestel and Leader}~\cite{DiLe} as a candidate example to answer
the following question of {\sc Woess}~\cite{SoWo}: is there a vertex-transitive
graph which is not quasi-isometric with a Cayley graph? It was only quite
recently that {\sc Eskin, Fisher and Whyte} 
finally showed, as
part of impressive work ~\cite{EFW1}, \cite{EFW2}, \cite{EFW3}, 
that $\DL(\pp,\qq)$ is indeed such an example.
On the other hand, in case of equal branching numbers of the
two trees, $\DL(\pp,\pp)$ is a Cayley graph of the lamplighter group
$\Z(\pp) \wr \Z$. This geometric realisation of the latter lead to
a good understanding of random walks, spectra and boundary theory of
those groups, the DL-graphs, and of horocyclic products of more than 2 trees,
see the work of {\sc Bertacchi, Bartholdi, Brofferio, Neuhauser, Woess}
\cite{Ber}, \cite{Wlamp}, \cite{BaWo}, \cite{BrWo1}, \cite{BrWo2}, \cite{BNW}.

Besides $\DL(\pp,\qq)$, treebolic space has another, more classical
sister structure. This is $\Sol(\pp,\qq)$, the horocyclic product of 
two hyperbolic planes
with curvatures $-\pp^2$ and $-\qq^2$, respectively, where $\pp, \qq > 0$. 
Besides being a $3$-dimensional Riemannian manifold, $\Sol(\pp,\qq)$ 
can be seen
as a Lie group, which is the semidirect product of $\R$ with $\R^2$
induced by the action $(x,y) \mapsto (e^{\pp z}x,e^{-\qq z}y)$, $z \in \R$.
$\Sol(1,1)$ is one of Thurston's eight model geometries in dimension 3.
The Brownian motion generated by the Laplace-Beltrami operator on $\Sol(\pp,\qq)$ 
is studied in detail in a sister
paper to the present one, by {\sc Brofferio, Salvatori and Woess}~\cite{BSW}.

The analogy between $\DL(\pp,\qq)$ and $\Sol(\pp,\qq)$ becomes also apparent
in \cite{EFW1},  \cite{EFW2}, \cite{EFW3}, where the quasi-isometry classes of these
graphs, resp. manifolds are determined. Coming back to treebolic space, we mention
that like the above sister structures, it is neither Gromov hyperbolic nor Cat(0).
We shall explain below that the amenable Baumslag-Solitar group
$\BS(\pp) = \langle a,b \mid ab = b^{\pp}a \rangle$ acts on $\HT(\pp,\pp)$
by isometries and with compact quotient. This fact has been exploited by
{\sc Farb and Mosher~\cite{FaMo}} (without describing the space as
a horocyclic product) 
in order to determine the quasi-isometry types 
of the Baumslag-Solitar groups. On the other hand, we shall see that
for $\pp \ne \qq$, no discrete group can act in such a way on our space. 

\smallskip

In the present paper, in \S \ref{sec:geometry} we first exhibit more details 
about the geometry of treebolic
space and its metric $\dist(\cdot,\cdot)=\dist_{\HT}(\cdot,\cdot)$ and explain its 
isometry group, which is (up to the obvious reflections
with respect to vertical hyperplanes) obtained as a ``horocyclic'' product
of the group $\Aff(\Hb,\qq)$ of all affine mappings $z \mapsto \qq^k z + b$ 
($z \in \Hb$, $k \in \Z$, $b\in \R$) and the affine group of the tree
$\T_{\pp}$, that is, the group of all automorphisms (self-isometries) 
of the tree that fix a given boundary point. 

We next, in \S \ref{sec:Laplacians}, turn our attention to the Laplace operator 
on $\HT$, whose rigorous construction as an essentially self-adjoint
diffusion operator bears a serious challenge in view of the singularities
which our structure has along the bifurcation lines. This challenge was faced in the 
general setting of strip complexes in \cite{BSSW}. As a matter of fact, we consider a 
family of Laplacians $\Lap_{\alpha,\beta}$ 
with two ``vertical drift'' parameters $\alpha$ and $\beta$.
When looking at Bownian motion (BM), that is, the diffusion 
$(X_t)_{t \ge 0}$ on $\HT$ whose infinitesimal
generator is $\Lap_{\alpha,\beta}\,$, it is hyperbolic BM 
with linear drift parameter $\alpha$
in the interior of each strip. On the other hand, 
$\beta$ is responsible for the random choice of the 
strip into which BM should make its next infinitesimal step when the current
position is on one of the bifurcation lines. The overall drift relies on both
in terms of the number $\rha = \beta \pp \,\qq^{\alpha-1}$.
The drift is $0$ if and only if $\rha =1$, while BM has an ``upwards'' (resp. ``downwards'')
drift when $\rha > 1$ (resp. $<1$).
The Laplacian and Brownian motion admit natural projections on $\T$ and $\Hb$,
as well as on $\R$. The projection onto $\R$ associates with each point its
height: it is the Busemann function with respect to the boundary
point at infininty of $\Hb$ (as well as of the tree). The projected
Brownian motion $(Z_t)_{t \ge 0}$ on $\Hb$ is in general not ordinary hyberbolic BM with drift
parameter $\alpha$, except when $\beta = 1/\pp$. 
That is, it evolves like hyperbolic BM with drift in the interior of each
of the strips $\Sf_k$ into which $\Hb$ has been ``sliced'', while it receives
an additional vertical ``kick'' (absent only when $\beta=1/\pp$) on each of the lines $L_k$.

The projection $(W_t)_{t \ge 0}$ on $\T$ is a typical example of BM on a
metric graph (the tree). The study of the corresponding Laplace operators
is by now well established, and more straightforward than the higher dimensional
version on strip complexes  that we are dealing with here.
See e.g. {\sc Cattaneo}~\cite{Cat}, {\sc Keller and Lenz}~\cite{KeLe},
{\sc Bendikov and Saloff-Coste}~\cite{BeSa}.
Analogously, the projection $(Y_t)_{t \ge 0}$ on $\R$ evolves like ordinary BM with 
drift as long as it does not visit any integer. When it visits an integer, BM receives
an additional random ``kick'' in the positive or negative direction.

The main goal of this paper is to describe how Brownian motion on $\HT$
evolves spatially. Main tool for this study is the sequence $\bigl(\tau(n)\bigr)$ 
of the stopping times of the successive visits of $(X_t)$ in the 
bifurcation lines $\Lf_v$, 
$v \in V(\T)$ (the vertex set of the tree). The increments 
$\tau(n) - \tau(n-1)$ are i.i.d. for $n \ge 2$, have exponential moments
and an explicitly computable Laplace transform, see \S \ref{sec:rw}. That section contains 
further basic preliminary results. In particular, we study
the distribution of the location of the process at time $\tau(1)$, which is
the law governing the process $(X_{\tau(n)})$. This is quite subtle, because the 
singularities of our structure require care when trying to implement methods 
that appear to be ``obvious'' in the classical smooth setting.  

The state space of the 
induced Markov process $(X_{\tau(n)})_{n \ge 0}$ is the disjoint union of
all bifurcation lines. 
The projection $(Z_{\tau(n)})_{n \ge 0}$ of that process on
$\Hb$ can be interpreted as a random walk on the group $\Aff(\Hb,\qq)$. 
It can be treated via the methods of the work of {\sc Grincevicius} \cite{Gr1},
\cite{Gr2}.
At the same time, the projection $(W_{\tau(n)})_{n \ge 0}$ is a nearest
neighbour random walk on the (vertex set of the) tree whose transition
probabilities are invariant under the action of the affine group of $\T$.
It can also be considered as a random walk on that group. Random walks
of this type were studied in detail by {\sc Cartwright, Kaimanovich and
Woess}~\cite{CKW}. The synthesis of those results on the two affine
groups of $\Hb$ and of $\T$ is crucial for our study.

In \S \ref{sec:boundary}, we consider the natural geometric
boundary at infinity of $\HT$. Since $\HT$ is naturally embedded in the direct
product $\Hb \times \T$, its natural compactification is its closure
in $\wh \Hb \times \wh \T$. The boundary of $\HT$ is the set of points added 
in this way. Here,  $\wh\T$ is the well-known end 
compactification of the tree, while $\wh \Hb$ is the classical compactification
of hyperbolic plane (the closed unit disk in the disk model of $\Hb$,
or equivalently -- in the upper half plane situation -- the upper half plane
together with its bottom line $\R$ and the ``upper'' boundary point at
infinity).  We show that in the topology of that compactification,
Brownian motion on $\HT$ converges almost surely to a limit random variable
that lives on the boundary. In general, we can get quite good information about
the law of that limit random variable, but it can be given explicitly only
in special cases regarding the choice of 
the parameters $\alpha$, $\beta$. 
Convergence to the boundary goes hand in hand with computation of 
the linear \emph{rate of escape} $\ell(\alpha,\beta)$, that is,  
$$
\dist_{\HT}(X_t,X_0) / t \to  \ell(\alpha,\beta) \quad\text{almost surely, as}
\; t \to \infty\,.
$$
It is the same as the rate of escape of $(Y_t)$ on $\R$. A basic tool for 
boundary convergence and rate of escape is the the notion of \emph{regular sequences}
of {\sc Kaimanovich}~\cite{Kai}. 

Next, in \S \ref{sec:CLT}, we derive a central limit theorem, concerning convergence in 
law of
$$
\Bigl(\dist_{\HT}(X_t,X_0) - t\,\ell(\alpha,\beta)\Bigr)\Big/\sqrt{t}\,.
$$
When $\ell(\alpha,\beta) > 0$, the limit law is centred 
normal distribution, and we also explain how to compute its variance
$\sigma^2(\alpha,\beta) > 0$. When $\ell(\alpha,\beta) = 0$, the result as well as the limit
distribution are somewhat more complicated. 

The interplay of BM with the boundary provides the bridge to the
potential theoretic part of our work, that will be laid out in forthcoming work
\cite{BSSW3}. 

In concluding the Introduction, we want to underline how similar the geometric 
features as well
as the properties of Brownian motion (resp. random walks) and the associated
harmonic functions are on DL-graphs \& lampligher groups, the $\Sol$-manifold 
(resp. \hbox{-group}) and treebolic space. In spite of the different techniques needed
for each of the three, the realisation of those analogies, as
well as the detailed study undertaken here, have become possible via
the geometric interpretation of those stuctures as horocyclic products. 

On the other hand, as already indicated, the elaboration and use of the 
analytic and probabilistic tools
for this study are quite subtle in view of the singularities of $\HT$ at the 
bifurcation lines, thus providing a first concrete implementation of the 
analysis on strip complexes developed in \cite{BSSW}.

\section{Geometry and isometries of treebolic space}\label{sec:geometry}
We start by describing the relevant features of the homogeneous tree 
$\T=\T_{\pp}$.
Here, we consider $\T$ as a one-complex, where each edge is a copy of the
unit interval $[0\,,\,1]$. The discrete graph metric 
$\dist_{\T}(v_1,v_2)$ on the vertex 
set ($0$-skeleton) $V(\T)$ of $\T$ is the length (number of edges) on the shortest
path between $v_1$ and $v_2$. This metric has an obvious ``linear'' extension to 
the one-skeleton.

We partition the vertex set into countably many sets $H_k\,$, $k \in \Z$, such
that each $H_k$ is countably infinite, and every vertex $v \in H_k$ has
precisely one neighbour $v^-$ (the \emph{predecessor} of $v$) in $H_{k-1}$ and
$\pp$ neighbours in $H_{k+1}$ (the \emph{successors} of $v$), each of with 
has $v$ as its predecessor. See Figure~2. The sets $H_k$ are called \emph{horocycles}.
For $v \in H_k$, we define $\hor(v) = k$. There is also a horocycle $H_t$
for any  real $t$: if $k = \lceil t \rceil$ and $v \in V(\T)$ with
$\hor(v) = k$, then the metric edge $[v^-,v]$ meets $H_t$ precisely in the 
point $w$ which is at distance $k-t$ from $v$, and we set $\hor(w)=t$.

In addition to this basic description, we shall need futher details.
A \emph{geodesic path}, resp. \emph{geodesic ray}, resp. \emph{infinite
geodesic} in $\T$ is the image of an isometric embedding $t \mapsto w_t \in \T$ of 
a finite  interval $[a\,,b]$, resp. one-sided infinite interval $[a\,,\,\infty)$, 
resp. $\R$, that is, $\dist(w_s,w_t) = |t-s|$ for all $s, t$. 
An \emph{end} of $\T$ is an equivalence class of rays, where
two rays $(w_t)$ and $(\bar w_t)$ are equivalent if they coincide up to 
finite initial pieces, i.e., there are $s, t_0 \in \R$ such that
$\bar w_t = w_{s+t}$ for all $t \ge t_0$.
We write $\bd \T$ for the space of ends, and $\wh \T = \T \cup \bd \T$. 
For all $\eta, \zeta \in \wh \T$ there is a unique geodesic $\geo{\eta\,\zeta}$ 
that connects the two. In particular, if $w \in \T$ and $\xi \in \bd \T$ then 
$\geo{w\,\xi}$ is the ray that starts at $w$ and represents $\xi$.
Furthermore, if $\xi, \eta \in \bd \T$ ($\xi \ne \eta$) then
$\geo{\eta\,\xi}$ is the infinite geodesic whose two halves (split at any of
its points) are rays that respresent $\eta$ and $\xi$, respectively.
For $v,w \in \T$, $v \ne w$, we define the cone
$\wh \T(v,w) = \{ \zeta \in \wh \T : w \in \geo{v\,\zeta} \}$.
For $\xi \in \bd\T$, the collection of all cones containing $\xi$ is a 
neighbourhood basis of  $\xi$, while a neighbourhood basis of $w \in \T$
is given by all open balls in the tree metric. Thus, we obtain a topology wich 
makes $\wh \T$ a compact Hausdorff space
with the vertex set $V(\T)$ as a discrete subset and $\bd \T$
a totally disconnected, compact subset. \\  
$$
\beginpicture 

\setcoordinatesystem units <.7mm,1mm>

\setplotarea x from -10 to 104, y from -84 to -3

\arrow <6pt> [.2,.67] from 0 0 to 80 -80

\plot 32 -32 62 -2 /

 \plot 16 -16 30 -2 /

 \plot 48 -16 34 -2 /

 \plot 8 -8 14 -2 /

 \plot 24 -8 18 -2 /

 \plot 40 -8 46 -2 /

 \plot 56 -8 50 -2 /

 \plot 4 -4 6 -2 /

 \plot 12 -4 10 -2 /

 \plot 20 -4 22 -2 /

 \plot 28 -4 26 -2 /

 \plot 36 -4 38 -2 /

 \plot 44 -4 42 -2 /

 \plot 52 -4 54 -2 /

 \plot 60 -4 58 -2 /

 \plot 99 -29 64 -64 /

 \plot 66 -2 96 -32 /

 \plot 70 -2 68 -4 /

 \plot 74 -2 76 -4 /

 \plot 78 -2 72 -8 /

 \plot 82 -2 88 -8 /

 \plot 86 -2 84 -4 /

 \plot 90 -2 92 -4 /

 \plot 94 -2 80 -16 /


\setdots <3pt>
\putrule from -4.8 -4 to 102 -4
\putrule from -4.5 -8 to 102 -8
\putrule from -1.3 -16 to 102 -16
\putrule from -1.0 -32 to 102 -32
\putrule from  2.2 -42 to 102 -42
\putrule from -1.0 -64 to 102 -64
\setdashes <3pt>
\linethickness =.7pt
\putrule from -3 6 to 102 6
\setlinear

\put {$\vdots$} at 32 3
\put {$\vdots$} at 64 3

\put {$\dots$} [l] at 103 -6
\put {$\dots$} [l] at 103 -48

\put {$H_{-3}$} [l] at -12.5 -64
\put {$H_{-2.3}$} [l] at -12.5 -42
\put {$H_{-2}$} [l] at -12.5 -32
\put {$H_{-1}$} [l] at -12.5 -16
\put {$H_0$} [l] at -12.5 -8
\put {$H_1$} [l] at -12.5 -4

\put {$\vdots$} at -10 3
\put {$\vdots$} [B] at -10 -70
\put {$\om$} at 82 -82

\put {$\bd^*\T$} [l] at -14 6

\put {\scriptsize $\bullet$} at 8 -8
\put {$o$} [rt] at 7.6 -8.8
\put {\scriptsize $\bullet$} at 96 -32
\put {$v$} [rt] at 100.4 -33
\put {\scriptsize $\bullet$} at 64 -64
\put {$v^-$} [rt] at 63.8 -64.6
\endpicture
$$

\vspace{.1cm}

\begin{center}
{\sl Figure~2.} The ``upper half plane'' drawing of $\T_2$ 
(top down)\footnote{This figure also appears in \cite{BSSW}.}
 
\end{center}

\medskip
We choose and fix a reference vertex (root) $o \in H_0$. The geodesic ray whose
vertices consist of the root and all its ancestors (= iterated predecessors)
defines a reference end $\om \in \bd \T$, the lower boundary point in Figure~2.
We set $\bd^* \T = \bd \T \setminus \{\om\}$, the upper boundary in Figure~2. 
For $w_1,w_2 \in \wh \T \setminus \{ \om \}$, their confluent (or maximal 
common ancestor)
$b = w_1 \cf w_2$ with respect to $\om$ is defined by
$\geo{w_1\,\om} \cap \geo{w_2\,\om} = \geo{b\,\om}$.
The function  $\hor: \T \to \R$ defined above is the 
\emph{Busemann function} of $\T$ with respect to $\om$, 
which can be written as
\begin{equation}\label{eq:hor}
\hor(w) = \dist(w,w \cf o) - \dist(o,w \cf o).
\end{equation}
In addition, we note that
\begin{equation}\label{eq:tree-conf}
\dist_{\T}(v,w) = \dist_{\T}(v\,,v\cf w) + \dist_{\T}(v\cf w,w)
= \hor(v)+\hor(w) - 2\hor(v\cf w).
\end{equation}
There is a natural
Lebesgue measure $dw$ on $\T$, which on each edge is a copy of
standard Lebesgue measure on the unit interval. 

The natural compactification $\wh \Hb$ of the hyperbolic plane $\Hb$ is the closed 
unit disk, when we
use the Poincar\'e disk model. In our upper half plane model, $\wh \Hb$ is the closed
upper half plane together with the the point at infinity $\binfty$. The
corresponding boundary  $\partial \Hb$ consists of $\binfty$ together with the
lower boundary line $\partial^*\Hb = \R$. The Busemann function on $\Hb$
with respect to $\binfty$ is $z \mapsto \log (\IM z)$, where $\IM z$ is the imaginary part 
of $z$. For $z, z' \in \wh\Hb\setminus \{\binfty\}$, we can define 
the hyperbolic analogue $z \wedge z'$ of the confluent: 
$z \wedge z = z$, and when $z \ne z'$, then $z \wedge z'$ is the point
on the (hyperbolic) geodesic $\geo{z\,z'}$ with maximal imaginary part.
Recall that $\geo{z\,z'}$ is part of a circle centred on $\R$ which is orthogonal
to that boundary line. The function $(z,z') \mapsto z \wedge z'$
is continuous from 
$(\wh \Hb \setminus \{\binfty\}) \times (\wh \Hb \setminus \{\binfty\})$
to $\wh \Hb \setminus \{\binfty\}$. (The analogous property holds for the
tree.)
Similarly to \eqref{eq:tree-conf}, we have for $z, z' \in \Hb$
\begin{equation}\label{eq:hyp-conf}
\begin{gathered}
\dist_{\Hb}(z,z') = \dist_{\Hb}(z\,,z\wedge z') + \dist_{\Hb}(z\wedge z', z')
\AND\\ 
\Big|\dist_{\Hb}(z,z')  \,-\, \Bigl(2 \log (\IM\, z \wedge z') - \log(\IM z)
- \log (\IM z')\Bigr) \Big| \le \log 4.
\end{gathered}
\end{equation}
We are not sure whether the last inequality appears in the literature
very often; its proof is an amusing exercise of handling the hyperbolic metric.

In the same way as $\T$ is subdivided horizontally by the horocycles $H_k$, 
$k \in \Z$, we subdivide $\Hb$ into the horizontal strips $\Sf_k$ delimited by 
the lines $L_k$ consisting of all $x + \im y \in \Hb$ with $y=\qq^k$, 
$k \in \Z$, see Figure~3. 
Note that all $\Sf_k$ are hyperbolically isometric.   

As outlined in introduction and abstract, treebolic space with parameters $\qq$ and
$\pp$ is 
\begin{equation}\label{eq:treebolicdef}
\HT(\qq,\pp) = \{ \zf = (z,w) \in \Hb \times \T_{\pp} : 
\hor(w) = \log_{\qq}(\IM z) \}\,.
\end{equation}
$$
\beginpicture 

\setcoordinatesystem units <1.4mm,1.00mm>

\setplotarea x from -40 to 40, y from 0 to 80

\arrow <6pt> [.2,.67] from 0 0 to 0 77.5

\plot -40 4  40 4 /

\plot -40 8  40 8 /

\plot -40 16  40 16 /

\plot -40 32  40 32 /

\plot -40 64  40 64 /

\put {$\im$} [rb] at -0.3 8.6
\put {$\scs \bullet$} at 0 8

\put {$y = \qq^{-1}$} [r] at -40 4
\put {$y = 1$} [r] at -43 8
\put {$y = \qq$} [r] at -43 16
\put {$y = \qq^2$} [r] at -42 32
\put {$y = \qq^3$} [r] at -42 64

\put {$\binfty$} [t] at 0 80

\put {$\R$} [r] at -45 0

\put {$\Sf_1$} at 30 12
\put {$\Sf_2$} at 30 24
\put {$\Sf_3$} at 30 48

\setdashes <3pt>
\linethickness =.7pt
\putrule from -43.4 0 to 40.4 0 
\setlinear

\endpicture
$$
\vspace{.1cm}

\begin{center}
{\sl Figure~3.} Hyperbolic upper half plane $\Hb$ subdivided in 
isometric strips\,\footnote{This figure also appears in \cite{BSSW}.}
\end{center}

\bigskip

Thus, Figures~2 and~3 are the 
``side'' and ``front'' views of $\HT$, that is, the images of $\HT$
under the projections $\pi_{\Hb}: (z,w) \mapsto z$ and $\pi_{\T}: (z,w) \mapsto w$, 
respectively. 
For a vertex $v \in V(\T)$, let 
\begin{equation}\label{eq:line}
\Lf_v = \{ \zf = (z,v) \in \HT : \IM z = \qq^{\hor(v)} \} 
= L_{\hor(v)} \times \{v\}\,.
\end{equation}
Then $\Lf_v$ and $\Lf_{v^-}$ are the upper and lower lines (respectively) 
in $\HT$ that delimit the strip 
\begin{equation}\label{eq:strip}
\Sf_v = \{ \zf = (z,w) \in \HT : w \in [v^-,v] \}\,.
\end{equation}
Here, $w  \in [v^-,v]$ is an element of the edge $[v^-,v]$ of $\T$, which
is (recall) a copy of the unit interval. For $\zf = (z,w) \in \HT$, we shall
sometimes write $\RE \zf = \RE z$ for the real part of $z$.

For each end $\xi \in \bd^*\T$, treebolic space contains the isometric copy
$$
\Hb_{\xi} = \{ \zf = (z,w) \in \HT : w \in \geo{\xi\,\om}\}
$$ 
of $\Hb$, and if $\xi,\eta \in \bd^*\T$ are distinct and $v=\xi \cf \eta$ (a vertex),
then $\Hb_{\xi}$ and $\Hb_{\eta}$ ramify along the line $\Lf_v$, that is,
$\Hb_{\xi} \cap \Hb_{\eta} = \{ (z,w) \in \HT : w \in \geo{v\,\om}\}$.

The metric of $\HT$ is given by the hyperbolic length element in
the interior of each strip. Its natural geodesic continuation is given 
as follows:
consider two points $\zf_1=(z_1,w_1) , \zf_2=(z_2,w_2) \in \HT$. Let $d_{\Hb}(z_1,z_2)$ by the hyperbolic distance
between $z_1$ and $z_2$, and let $v=w_1 \cf w_2\,$. Then 
\begin{equation}\label{eq:metric}
\dist_{\HT}\bigl(\zf_1,\zf_2) = 
\begin{cases} \dist_{\Hb}(z_1,z_2)\,,&\text{if}\; v \in \{ w_1, w_2 \},\\
  \min \{ \dist_{\Hb}(z_1,z)+\dist_{\Hb}(z,z_2) : z \in L_{\hor(v)} \}\,,&\text{if}\; 
        v \notin \{ w_1, w_2 \}.
\end{cases}
\end{equation}
Indeed, in the first case, $\zf_1$ and $\zf_2$ belong to a common copy 
$\Hb_{\xi}$ of $\Hb$. In the second case, $v \in V(\T)$, and there are 
$\xi_1, \xi_2 \in \bd^*\T$  such that $\xi_1 \cf \xi_2 =v$ and 
$\zf_i \in \Hb_{\xi_i}$ lie above the line $\Lf_v$, 
so that it is necessary to pass through some point $\zf = (z, v) \in \Lf_v$ 
on the way from $\zf_1$ to $\zf_2$. See Figure~4.
$$
\beginpicture 

\setcoordinatesystem units <1.1mm,1.1mm>

\setplotarea x from -16 to 54, y from -4 to 30

\plot 0 0  40 0  55 26  15 26  0 0  -18 23  13 23   /

\put{$\Lf_v$} [l] at 41.5 0
\put{\scriptsize $\bullet$} at 34 16
\put{$\zf_2$} [l] at 35 16
\put{\scriptsize $\bullet$} at -2 12
\put{$\zf_1$} [r] at -2.8 12
\put{\scriptsize $\bullet$} at 20.5 0
\put{$\zf$} [t] at 20.5 -1

\circulararc -10 degrees from -2 12 center at -10 -30

\circulararc 19 degrees from 34 16 center at 75 -33

\setdashes <2pt>
\circulararc -24 degrees from 5.4 9.9 center at -10 -30

\endpicture
$$
\begin{center}
{\sl Figure~4.\/} Geodesic connecting $\zf_1$ and $\zf_2$ in $\HT$.
\end{center}

\medskip
\begin{pro}\label{pro:metric} For $\zf_1=(z_1,w_1)$, 
$\zf_2=(z_2,w_2) \in \HT$,
with $\delta = \log(1+\sqrt 2)$,
$$
\begin{aligned}
\dist_{\HT}(\zf_1, \zf_2) \le 
\dist_{\Hb}(z_1,z_2) &+ (\log \qq)\, \dist_{\T}(w_1,w_2) \\
&- \underbrace{(\log \qq) \, |\hor(w_1) - \hor(w_2)|}_{\displaystyle
|\log\IM z_1 - \log\IM z_2|}
\le \dist_{\HT}(\zf_1, \zf_2) +
2\delta\,.
\end{aligned}
$$
\end{pro}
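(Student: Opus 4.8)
The plan is to use the dichotomy in \eqref{eq:metric} to reduce the statement to a single inequality inside $\Hb$. Write $h_i=\hor(w_i)$, so that $\IM z_i=\qq^{h_i}$, put $v=w_1\cf w_2$, and denote by $Q$ the middle quantity in the proposition. In the first case of \eqref{eq:metric}, where $v\in\{w_1,w_2\}$, one of $w_1,w_2$ is an ancestor of the other, the tree geodesic joining them is monotone, whence $\dist_\T(w_1,w_2)=|h_1-h_2|$ and $Q=\dist_\Hb(z_1,z_2)=\dist_\HT(\zf_1,\zf_2)$; both inequalities are then trivial. All the content sits in the second case, where $v$ is a vertex lying strictly between $w_1$ and $w_2$. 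Here \eqref{eq:tree-conf} gives $\dist_\T(w_1,w_2)=(h_1-h_v)+(h_2-h_v)$, and since $\log\IM z_i=(\log\qq)\,h_i$ a short computation turns $Q$ into
\[
Q=\dist_\Hb(z_1,z_2)+2\log\tfrac{m}{Y},\qquad m:=\min(\IM z_1,\IM z_2),\quad Y:=\qq^{\hor(v)},
\]
with $m\ge Y$ because $v$ is an ancestor of both $w_i$. As \eqref{eq:metric} identifies $\dist_\HT(\zf_1,\zf_2)$ with $D:=\min\{\dist_\Hb(z_1,z)+\dist_\Hb(z,z_2):z\in L_{\hor(v)}\}$, it remains to prove the purely hyperbolic assertion $Q-2\delta\le D\le Q$.

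The upper bound $D\le Q$ is the easy half. I would relabel so that $\IM z_2=m$ and take $z_0=\RE z_2+\im Y\in L_{\hor(v)}$, the point vertically below $z_2$. Then $\dist_\Hb(z_0,z_2)=\log(m/Y)$, while the triangle inequality gives $\dist_\Hb(z_1,z_0)\le\dist_\Hb(z_1,z_2)+\log(m/Y)$; the broken path $z_1\to z_0\to z_2$ therefore has length at most $Q$, so $D\le Q$.

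The lower bound is the heart of the matter and the step I expect to be hardest. I would isolate the hyperbolic \emph{excess estimate}
\[
\dist_\Hb(z_1,z)+\dist_\Hb(z,z_2)-\dist_\Hb(z_1,z_2)\;\ge\;2\,\dist_\Hb\bigl(z,\geo{z_1 z_2}\bigr)-2\log 2,
\]
valid for every $z\in\Hb$, where $\geo{z_1 z_2}$ is the geodesic segment. Granting it, I note that $\geo{z_1 z_2}$ stays at height $\ge m$, so any $z\in L_{\hor(v)}$ (at height $Y\le m$) satisfies $\dist_\Hb(z,\geo{z_1 z_2})\ge\log(m/Y)$ by the $1$-Lipschitz property of the logarithmic height function; minimizing over $z$ then gives $D\ge\dist_\Hb(z_1,z_2)+2\log(m/Y)-2\log 2=Q-2\log 2\ge Q-2\delta$, since $\log 2<\delta$.

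To prove the excess estimate I would drop the perpendicular from $z$ to the complete geodesic through $z_1,z_2$, with foot $f$ and $\rho=\dist_\Hb(z,f)$; the hyperbolic theorem of Pythagoras gives $\cosh\dist_\Hb(z_i,z)=\cosh\dist_\Hb(z_i,f)\,\cosh\rho$. Everything then rests on the elementary scalar bounds
\[
d+\rho-\log 2\;\le\;\operatorname{arccosh}(\cosh d\,\cosh\rho)\;\le\;d+\rho\qquad(d,\rho\ge 0),
\]
the right one being $\cosh(d+\rho)\ge\cosh d\cosh\rho$, the left one holding because $F(d,\rho)=\operatorname{arccosh}(\cosh d\cosh\rho)-d-\rho$ decreases in each variable down to its infimum $-\log2$ (the limit as $d,\rho\to\infty$). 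If $f$ lies on $\geo{z_1 z_2}$ then $\dist_\Hb(z,\geo{z_1 z_2})=\rho$ and $\dist_\Hb(z_1,f)+\dist_\Hb(f,z_2)=\dist_\Hb(z_1,z_2)$, and applying the left bound to each of $\dist_\Hb(z_1,z)$ and $\dist_\Hb(z,z_2)$ yields the excess estimate at once; if $f$ falls beyond an endpoint, say beyond $z_2$, then $\dist_\Hb(z,\geo{z_1 z_2})=\dist_\Hb(z,z_2)$ and combining the left bound for $\dist_\Hb(z_1,z)$ with the right bound for $\dist_\Hb(z,z_2)$ gives the same conclusion (in fact with $-\log2$ in place of $-2\log2$). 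The main obstacle is thus the hyperbolic bookkeeping: setting up the perpendicular projection, extracting the sharp constant in the scalar inequality, and verifying that the out-of-segment case is no worse. The value $\delta=\log(1+\sqrt2)$, for which $\sinh\delta=1$, is comfortably larger than the $\log 2$ the argument actually needs, so the inequality as stated follows with room to spare.
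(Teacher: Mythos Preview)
Your argument is correct and follows the same overall architecture as the paper: the first case of \eqref{eq:metric} is trivial; in the second case you rewrite the middle quantity as $\dist_\Hb(z_1,z_2)+2\log(m/Y)$, get the upper bound by exhibiting an explicit broken path through $L_{\hor(v)}$, and get the lower bound by comparing the Gromov product $(z_1\mid z_2)_z=\tfrac12(\dist_\Hb(z_1,z)+\dist_\Hb(z,z_2)-\dist_\Hb(z_1,z_2))$ with $\dist_\Hb(z,\geo{z_1z_2})$, together with the observation that the geodesic segment $\geo{z_1z_2}$ never dips below height $m$.

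The genuine difference is in how that last comparison is obtained. The paper simply invokes Gromov hyperbolicity of $\Hb$: every geodesic triangle is $\delta$-thin with $\delta=\log(1+\sqrt2)$, and from this one quotes the general fact $\dist(z,\geo{z_1z_2})-2\delta\le(z_1\mid z_2)_z\le\dist(z,\geo{z_1z_2})$. You instead prove the needed inequality from scratch via the hyperbolic Pythagorean theorem and the scalar estimate $d+\rho-\log2\le\operatorname{arccosh}(\cosh d\,\cosh\rho)\le d+\rho$, handling the in-segment and off-segment cases for the foot of the perpendicular separately. Your route is more elementary and self-contained (no appeal to the theory of $\delta$-hyperbolic spaces) and actually yields the sharper constant $\log2$ in place of $2\delta$, so the final inequality holds with $2\log2$ in place of $2\delta$; the paper's route is shorter to write down and more conceptual. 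For the upper bound your test path (drop vertically from the lower of $z_1,z_2$) differs cosmetically from the paper's (go down from $z_1$ and back up, then along $\geo{z_1z_2}$), but both give exactly $Q$.
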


\begin{proof}
In the first case of \eqref{eq:metric}, we have $\dist_{\HT}(\zf_1, \zf_2) = 
\dist_{\Hb}(z_1,z_2)$ and $\dist_{\T}(w_1,w_2) = |\hor(w_1) - \hor(w_2)|$.
Therefore, the left hand side inequality of the proposition is indeed an
equality.

\smallskip

We consider the second case of \eqref{eq:metric}. 
We suppose without loss of generality that 
$\IM z_1 \le \IM z_2$ and $\RE z_1 \le \RE z_2$. 
Set $k = \hor(v)$, and let $z \in \Lf_k$ be a point that
realizes the minimum in \eqref{eq:metric}, corresponding to $\zf  = (z,v)$ in
Figure~4. On the vertical ray in $\Hb$ going upwards from 
$z$ to $\binfty$, let $z_1'$ be the point with $\IM z_1' = \IM z_1\,$.
Also, we let $z'$ be the point on $L_k$ with $\RE z' = \RE z_1\,$.
See Figure~5, showing the respective points and geodesic arcs in $\Hb$.

We start with the left hand one of the two proposed inequalities,
and use the minimising property of $z$: the
distance $\dist_{\HT}(\zf_1,\zf_2)$ is bounded above by the length of
any path in $\Hb$ that starts at $z_1\,$, ends at $z_2$ and visits
$\Lf_k$ in between. We choose the following path: we first move vertically
from $z_1$ down to $z'$, then back up to $z_1$, and then along the geodesic 
arc from $z_1$ to $z_2\,$. 
The length of this path is  
$$
2(\log \IM z_1 - \log \IM z') + \dist_{\Hb}(z_1,z_2)\,,
$$
which coincides with the middle term of our double inequality.   

$$
\beginpicture 

\setcoordinatesystem units <.8mm,.8mm>

\setplotarea x from -30 to 46, y from -4 to 45

\plot -30 0  46 0 /

\put{$\Lf_k$} [l] at 47 0
\put{\scriptsize $\bullet$} at 0 0
\put{$z$} [t] at 0 -1.8
\put{\scriptsize $\bullet$} at -26 26
\put{$z_1$} [r] at -27 26
\put{\scriptsize $\bullet$} at 45 27
\put{$z_2$} [l] at 46 27
\put{\scriptsize $\bullet$} at 0 26
\put{$z_1'$} [l] at 1 26
\put{\scriptsize $\bullet$} at -26 0
\put{$z'$} [t] at -26 -1

\circulararc -90 degrees from 0 0 center at 36 -9 

\circulararc 61 degrees from 0 0 center at -35 -9

\circulararc 90 degrees from 45 27 center at 10 -9
\setdashes <2pt>

\plot 0 0  0 40.3 /
\plot -26 0  -26 26 /

\endpicture
$$

\begin{center}
{\sl Figure~5.\/} The geodesic triangle in $\Hb$ formed by the 
projections of $\zf_1\,$, $\zf_2$ and $\zf$.
\end{center}

\medskip

We now consider the right hand one of the two proposed inequalities. 
By \eqref{eq:metric}, 
$\dist_{\HT}(\zf_1,\zf_2) = \dist_{\Hb}(z_1,z) +  \dist_{\Hb}(z_2,z)$.
On the other hand, 
$$
\begin{aligned} 
(\log \qq)\, \dist_{\T}(w_1,w_2) 
&= (\log \IM z_1 - \log \IM z) +  (\log \IM z_2 - \log \IM z)  
\AND\\ 
(\log \qq) \, |\hor(w_1) - \hor(w_2)| &= \log \IM z_2 - \log \IM z_1\,,
\end{aligned}
$$
because the last term was assumed to be $\ge 0$. Thus,
$$
(\log \qq)\, \dist_{\T}(w_1,w_2) 
- (\log \qq) \, |\hor(w_1) - \hor(w_2)| = 2 \dist_{\Hb}(z_1',z)\,.
$$
Thus, the claim of the proposition is equivalent with
$$
\dist_{\Hb}(z_1',z) - 2\delta \le
\frac12\Bigl( \dist_{\Hb}(z_1,z) + \dist_{\Hb}(z_2,z) - \dist_{\Hb}(z_1,z_2)\Bigr)
\le \dist_{\Hb}(z_1',z)\,,
$$ 
and we still need to proof the left hand inequality.
Now recall that $\Hb$ is the classical model of a geodesic metric space which 
is Gromov-hyperbolic: for the given value of $\de \ge 0$, every geodesic
triangle is $\de$-thin. (That is, for any point on one of the three sides,
there is a point at distance at most $\de$ on the union of the
other two sides.) We refer to {\sc Gromov}~\cite{Gro}, 
{\sc Coornaert, Delzant and Papadopoulos}~\cite{CDP} and/or 
{\sc Ghys and de la Harpe}~\cite{GH} for all details regarding
hyperbolic metrics and spaces. 
Now, $\frac12\bigl( \dist(z_1,z) + \dist(z_2,z) - \dist(z_1,z_2)\bigr) =
(z_1|z_2)_z$
is just the so-called \emph{Gromov product} of $z_1$ and $z_2$ with respect to 
the reference point $z$. It is well known that the Gromov product on any
geodesic hyperbolic metric space satisfies
$$
\dist(z, \geo{z_1\,z_2}) -2\de \le (z_1|z_2)_z  
\le \dist(z, \geo{z_1\,z_2})\,,
$$
where $\geo{z_1\,z_2}$ is of course the geodesic arc between
$z_1$ and $z_2\,$. In our situation, we have 
$\dist_{\Hb}(z, \geo{z_1\,z_2}) \ge \dist_{\Hb}(z_1',z)$, and
the desired inequality follows. 
\end{proof}

Proposition \ref{pro:metric} should be compared with the formula of 
\cite[Prop. 3.1]{Ber} for the graph metric of the $\DL$-graphs, which is 
of the same form (without the $\delta$).
Note that the width of a strip $\Sf_v$ is
$\dist_{\HT}(\Lf_{v^-},\Lf_v) = \log \qq$, while its image under the projetction
$\pi_{\T}$ is the (metric) edge $[v^-,v]$, which has length $1$. That is, 
in the construction of $\HT$ from $\T$ and $\Hb$, the tree is
stretched by a factor of $\log \qq$.

Also note that the coordinates $(z,w)$ of $\HT$ used in 
\eqref{eq:treebolicdef} are useful in order to see the nature of $\HT$ as a
horocylic product and for deducing algebraic--geometric properties. However,
by their nature, these coordinates are not independent. The resulting
redundancy can be avoided by yet another description, more suitable for analytic
purposes; see \cite[\S 2.B]{BSSW}. It is not used here in order to avoid abundance of
multiple notation.

\smallskip

The \emph{area element} of $\HT$ is $d\zf = y^{-2} dx\,dy$ for $\zf=(z,w)$
in the interior of every $\Sf_v$, where $z=x+\im y$ and $dx$, $dy$ are 
Lebesgue measure: this is (a copy of) the standard hyperbolic upper half
plane area element. The area of the lines $\Lf_v$ is of course $0$. 

\begin{dfn}\label{dfn:fv}
For a real function $f$ on $\HT$, we write $f_v$ for its restriction to the closed
strip $\Sf_v\,$, 
where $v \in V(\T)$. For its values, we write $f_v(z) = f(z,w)$, 
where $w \in \T$  is the unique element on the edge $[v^-\,,v]$ such that
$(z,w) \in \HT$ (that is, $\hor(w) = \log_{\qq}(\IM z)$).  

Analogous notation is used for the restriction of a function $f$ defined on
$\Omega \subset \HT$ to $\Omega \cap \Sf_v\,$.
\end{dfn}

While we think 
of $f_v$ as a function on $\Sf_v\,$, it is formally a function defined
for complex $z = x+\im y \in \Sf_{\hor(v)} \subset \Hb$.
The integral of $f$ with respect to the area element is given by
\begin{equation}\label{eq:measure}
\int_{\HT} f(\zf) \,d\zf = 
\sum_{v \in V(\T)} \int_{\Sf_{\hor(v)}} f_v(x + \im y)\,y^{-2}\,dx\,dy\,,
\end{equation}
whenever this is well defined in the sense of Lebesgue integration.

Next, we determine the isometry group of $\HT(\qq,\pp)$ and its modular 
function. Recall here that the modular function $\md$ of an arbitrary locally compact group 
$G$ is the continuous homomorphism from the group into the multiplicative 
group $\R_+$ with the property that for left Haar measure $dg$ on $G$, one has
$$
\int_G f(gg_0)\,dg = \md(g_0)^{-1} \int_G f(g)\,dg
$$
for every integrable function $f$ on $G$.

Consider the action on $\Hb$ of the group of affine transformations
\begin{equation}\label{eq:AffRq}
\Aff(\Hb,\qq) = \left\{ g=\Bigl(\!\begin{smallmatrix} \qq^n & b \\[2pt] 0 & 1 \end{smallmatrix}\!\Bigr) :
n \in \Z\,,\ b\in \R \right\}, \quad
\text{where} \;\; gz = \qq^n z + b\,,\; z \in \Hb\,.
\end{equation}
Thus, $g_1g_2 = \bigl(\begin{smallmatrix}\qq^{n_1+n_2} & \,b_1 + \qq^{n_1}b_2\\ 0 & 1
                      \end{smallmatrix}\bigr)$
for $g_i=\bigl(\begin{smallmatrix}\qq^{n_i} &b_i\\ 0 & 1\end{smallmatrix}\bigr)$, $i=1,2$.
This group acts by isometries on $\Hb$ and leaves the set of lines $y = \qq^k$, 
$k \in \Z$, invariant. The full group of isometries of $\Hb$ with the latter property 
is generated by $\Aff(\Hb,\qq)$ and the reflection along the $y$-axis. 
Our group is locally compact, and left Haar measure $dg$ and its modular function 
$\md_{\Hb} = \md_{\Hb,\qq}$ are given 
by
\begin{equation}\label{eq:modularAffRq}
dg = \qq^{-n}\,dn\,db \AND \md_{\Hb}(g) = \qq^{-n}\,,\quad\text{if}\quad
g=\Bigl(\!\begin{smallmatrix}\qq^n& b \\[2pt] 0 & 1 \end{smallmatrix}\!\Bigr)\,.
\end{equation}
Here, $dn$ is counting measure on $\Z$ and $db$ is Lebesgue measure on $\R$. 

Regarding the tree, first note that every isometry is the natural linear extension
of an automorphism, that is, a neighbourhood preserving permutation of the vertex
set $V(\T)$. Also, note that the action of each isometry extends continuously to
$\wh\T$, since isometries send geodesic rays to geodesic rays and preserve their
equivalence. Let $\Aut(\T_{\pp})$ denote the full isometry group of $\T_{\pp}$.
Following {\sc Cartwright, Kaimanovich and Woess}~\cite{CKW},
the \emph{affine group} of $\T_{\pp}$ is
\begin{equation}\label{eq:AffTp}
\Aff(\T_{\pp}) = \{ \ga \in \Aut(\T_{\pp}) : \ga\om=\om \}\,. 
\end{equation}
This is a locally compact, totally disconnected and compactly generated 
group with respect to the topology of pointwise convergence, and it acts
transitively on $V(\T)$.
The name is chosen (1)~because of the analogy with the classical affine group which
is just the group of (orientation preserving) isometries of $\Hb$ that fix the
boundary point $\binfty$, and (2)~because the affine group over any local field whose
residual field has order $\pp$ embeds naturally into $\Aff(\T_{\pp})$, see
\cite{CKW} and below. The elements $\ga$ of $\Aff(\T)$ are also characterized 
by the
property $\ga(v^-) = (\ga v)^-$ for every $v \in V(\T)$, or equivalently, by
$\ga(w_1 \cf w_2) = (\ga w_1) \cf (\ga w_2)$ for all $w_i \in \T$. 
Consequently, the mapping $\Phi: \Aff(\T) \to \Z$ defined by 
$\ga \mapsto \hor(\ga w) - \hor(w)$ is independent of $w \in \T$ and a homomorphism. 
Thus $\ga(H_t) = H_{t+k}$ if $\hor(\ga w) - \hor(w)=k$.
As a matter of fact, this mapping appears in the modular function 
$\md_{\T} = \md_{\T_{\pp}}$ of $\Aff(\T_{\pp})$, see \cite{CKW}:
\begin{equation}\label{eq:modularAffTp}
\md_{\T}(\ga) = \pp^{\Phi(\ga)} \quad \text{where}\quad 
\Phi(\ga)=\hor(\ga w) - \hor(w) \,,\quad\text{if} 
\quad \ga \in \Aff(\T_{\pp})\,,\;w \in \T\,.
\end{equation}

In the following theorem, we collect several rather 
straightforward properties of the isometry group of $\HT(\qq,\pp)$.

\begin{thm}\label{thm:isogroup} The group
$$
\Af = \Af(\qq,\pp) = \{ (g,\ga) \in \Aff(\Hb,\qq) \times \Aff(\T_{\pp}) : 
\log_{\qq} \md_{\Hb}(g) + \log_{\pp} \md_{\T}(\ga) = 0 \} 
$$
acts on $\HT(\qq,\pp)$ by isometries $(g,\ga)(z,w) = (gz,\ga w)$. It is
the semidirect product
$$
\Af = \R \rtimes \Aff(\T) \quad \text{with respect to the action}
\quad b \mapsto \qq^{\Phi(\ga)}\,b\,, \; \ga \in \Aff(\T)\,,\; b \in \R\,.
$$  
The full group
of isometries of $\HT(\qq,\pp)$ is generated by $\Af(\qq,\pp)$ and the reflection
$$
{\mathfrak s}(x+\im y,w) = (-x+\im y,w)\,.
$$
It acts on $\HT(\qq,\pp)$ with compact quotient isomorphic with the circle of length
$\log \qq$, and it leaves the area element of $\HT$ invariant. 

As a closed subgroup of $\Aff(\Hb,\qq) \times \Aff(\T_{\pp})$, the group $\Af$ 
is locally compact, compactly generated and amenable, and its modular function is 
given by
$$
\md_{\Af}(g,\ga)= (\pp/\qq)^{\Phi(\ga)}\,. 
$$
\end{thm}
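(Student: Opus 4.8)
The plan is to verify the assertions in the order stated, treating the determination of the full isometry group as the geometric core and the remaining statements as consequences of the explicit semidirect‑product description.

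First I would check that $\Af$ acts, and acts by isometries. For $g=\bigl(\begin{smallmatrix}\qq^n&b\\0&1\end{smallmatrix}\bigr)$ one has $\IM(gz)=\qq^n\IM z$, so $\log_{\qq}\IM(gz)=n+\log_{\qq}\IM z$, while $\hor(\ga w)=\hor(w)+\Phi(\ga)$. Hence $(gz,\ga w)$ again satisfies the defining relation \eqref{eq:treebolicdef} exactly when $n=\Phi(\ga)$; since $\log_{\qq}\md_{\Hb}(g)=-n$ and $\log_{\pp}\md_{\T}(\ga)=\Phi(\ga)$, the defining condition of $\Af$ is precisely $n=\Phi(\ga)$, so the action is well defined. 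That it is isometric follows by inspecting \eqref{eq:metric}: $g$ preserves $\dist_{\Hb}$ and sends $L_{\hor(v)}$ to $L_{\hor(v)+n}=L_{\hor(\ga v)}$, while $\ga$ preserves confluents, $\ga(w_1\cf w_2)=(\ga w_1)\cf(\ga w_2)$, so both cases of \eqref{eq:metric} are preserved. For the semidirect‑product description I would use that on $\Af$ the exponent $n$ is forced to equal $\Phi(\ga)$, so $(b,\ga)\mapsto\bigl(\bigl(\begin{smallmatrix}\qq^{\Phi(\ga)}&b\\0&1\end{smallmatrix}\bigr),\ga\bigr)$ is a bijection $\R\times\Aff(\T)\to\Af$; multiplying two such elements and using that $\Phi$ is a homomorphism yields the law $(b_1,\ga_1)(b_2,\ga_2)=(b_1+\qq^{\Phi(\ga_1)}b_2,\ga_1\ga_2)$, which is the asserted semidirect product $\R\rtimes\Aff(\T)$.

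The main work is the full isometry group, and this rigidity/gluing step is where I expect the only genuine difficulty. I would argue that any isometry $\psi$ of $\HT$ preserves the singular set $\bigcup_v\Lf_v$ (the locus where more than one strip meets) and hence permutes the bifurcation lines. The local picture at a line $\Lf_v$, namely $\pp$ strips above and a single strip below, is not symmetric under exchanging ``up'' and ``down,'' so $\psi$ cannot reverse the vertical direction; consequently the induced permutation of the lines respects the predecessor relation and defines an automorphism $\ga$ fixing $\om$, that is $\ga\in\Aff(\T)$, with $\hor$ shifted by the constant $\Phi(\ga)$. Restricted to any single strip, $\psi$ is an isometry of $\Hb$ carrying the horocyclic family $\{y=\qq^k\}$ to itself and preserving its order; such an isometry fixes $\binfty$ and therefore has the form $z\mapsto\qq^{\Phi(\ga)}z+b$ or $z\mapsto-\qq^{\Phi(\ga)}\bar z+b$. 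Finally I would exploit the global real coordinate $\RE\zf$: two adjacent strips share a bifurcation line, on which the two affine descriptions of $\psi$ in $\RE\zf$ coming from the two strips must agree, forcing the same translation parameter $b$ and the same sign. By connectedness of $\T$ this propagates to all strips, so $\psi=(g,\ga)$ with $g=\bigl(\begin{smallmatrix}\qq^{\Phi(\ga)}&b\\0&1\end{smallmatrix}\bigr)$, composed at most with $\mathfrak{s}$. The subtle points are the intrinsic characterization of the bifurcation lines and the exclusion of vertical reversal.

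For the remaining assertions I would note that $\Aff(\T)$ is transitive on $V(\T)$ and the $\R$‑factor is transitive on each $\Lf_v$, so $\Af$ is transitive on $\bigcup_v\Lf_v$; together with the vertical translations by $\log\qq$ contained in $\Af$ (any $\ga$ with $\Phi(\ga)=1$ and the matching scaling of $y$), the only orbit invariant left is the fractional height $\log_{\qq}(\IM z)\bmod 1$, so the quotient is the vertical segment across one strip with its two endpoint lines identified, a circle of hyperbolic length $\log\qq$ (unchanged by $\mathfrak{s}$, which fixes $y$). Invariance of the area element is immediate since each generator is a Riemannian isometry preserving $y^{-2}dx\,dy$. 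Local compactness holds because $\Af$ is cut out in $\Aff(\Hb,\qq)\times\Aff(\T)$ by the clopen condition $n=\Phi(\ga)$, hence is closed; compact generation and amenability follow from the extension $1\to\R\to\Af\to\Aff(\T)\to1$, as $\R$ and $\Aff(\T)$ are both compactly generated (the latter as recalled above) and amenable ($\R$ abelian, $\Aff(\T)$ amenable as an end‑stabilizer, cf. \cite{CKW}), and both properties pass to extensions. For the modular function I would compute directly from the structure $\Af=\R\rtimes_{\phi}\Aff(\T)$ with $\phi_{\ga}(b)=\qq^{\Phi(\ga)}b$: with the paper's convention $\int f(gg_0)\,dg=\md(g_0)^{-1}\int f\,dg$, a left Haar measure is $m(\ga)^{-1}\,db\,d\ga$, where $m(\ga)=\qq^{\Phi(\ga)}$ is the module of $\phi_{\ga}$ on Lebesgue measure and $d\ga$ is left Haar on $\Aff(\T)$; evaluating $\int f\bigl((b,\ga)(b_0,\ga_0)\bigr)$ against it gives $\md_{\Af}(g,\ga)=\md_{\T}(\ga)\,m(\ga)^{-1}=\pp^{\Phi(\ga)}\qq^{-\Phi(\ga)}=(\pp/\qq)^{\Phi(\ga)}$, and as a check the same formula applied to $\Aff(\Hb,\qq)=\R\rtimes\Z$ reproduces $\md_{\Hb}=\qq^{-n}$ of \eqref{eq:modularAffRq}.
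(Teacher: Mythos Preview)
Your argument is correct and follows the paper's proof closely in parts (1), (2) and (4); for the modular function you invoke the standard semidirect-product formula, while the paper performs the same computation by hand, but these are equivalent.

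In part (3) there is a genuine, if minor, methodological difference worth noting. To show that the $\Hb$-component $g$ of an isometry is globally defined, you work \emph{strip by strip}: classify the self-isometries of a single horocyclic strip that respect its two boundary horocycles (these are indeed $z\mapsto \pm\qq^n z + b$), and then propagate the parameters $b$ and the sign across adjacent strips via their common bifurcation line, using connectedness of $\T$. The paper instead exploits the \emph{global} copies $\Hb_{\xi}\subset\HT$ indexed by ends $\xi\in\partial^*\T$: since $\psi$ maps $\Hb_\xi$ isometrically onto $\Hb_{\ga\xi}$, one obtains directly a M\"obius map $g_\xi$ of all of $\Hb$ fixing $\binfty$, and any two $g_\xi$, $g_\eta$ agree on the open half-plane below $\Lf_{\xi\cf\eta}$, hence coincide. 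The paper's route avoids having to classify isometries of a strip as such, while yours is more local and arguably closer in spirit to a strip-complex argument; both reach the same conclusion. Similarly, for the exclusion of vertical reversal the paper makes your asymmetry remark concrete by exhibiting that $\Sf_u\cup\Sf_v$ (two vertically adjacent strips) is not isometric to two strips glued along their \emph{bottom} lines, which is what a reversal would force; you might want to spell out a comparable obstruction rather than leave it as an appeal to the ``$\pp$ above, $1$ below'' picture.
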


\begin{proof} (1) Let $(g,\ga) \in \Af$ and $(z,w) \in \HT$, with 
$g=\bigl(\begin{smallmatrix} \qq^n & b \\ 0 & 1 \end{smallmatrix}\bigr)$ and 
$z=x+\im y$. Then $gz = (b+\qq^n x) + \im (\qq^n y)$, and $\hor(\ga w) = \hor(w)+n$.
Thus, $\hor(w) = \log_q(\IM z)$ implies $\hor(\ga w) = \log_q(\IM gz)$, whence
$(gz,\ga w) \in \HT$. From \eqref{eq:metric}, one sees that this is an isometry.
Indeed, let $(z_i,w_i) \in \HT$ ($i=1,2$) and $v = w_1 \cf w_2$.
Then $\ga v = \ga w_1 \cf \ga w_2$. So, if $v \in \{ w_1, w_2 \}$ then
$\ga v \in \{ \ga w_1, \ga w_2 \}$ and 
$$
d\bigl((gz_1,\ga w_1),(gz_2,\ga w_2)\bigr) = d_{\Hb}(gz_1,gz_2) =  
d_{\Hb}(z_1,z_2) = d\bigl((z_1,w_1),(z_2,w_2)\bigr)\,.
$$
If $ v \notin \{ w_1, w_2  \}$ then $v \in V(\T)$ and $\ga \Lf_v = \Lf_{\ga v}$.
If $z_0$ minimizes $d_{\Hb}(z_1,z) + d_{\Hb}(z,z_2)$ among all $z \in L_{\hor(v)}$,
then $gz_0$ minimizes $d_{\Hb}(gz_1,\tilde z) + d_{\Hb}(\tilde z,z_2)$ among all 
$\tilde z \in L_{\hor(\ga v)}$. Thus, 
$d\bigl((gz_1,\ga w_1),(gz_2,\ga w_2)\bigr) = d\bigl((z_1,w_1),(z_2,w_2)\bigr)$
in this case as well. Thus, $\Af$ acts by isometries.

\smallskip

(2) We can identify each element 
$\gf=(g,\ga) \in \Af$ with the pair $[b,\ga] \in \R \times \Aff(\T)$, where 
$g=\bigl(\begin{smallmatrix} \qq^{\Phi(\ga)} & b \\ 0 & 1 \end{smallmatrix}\bigr)$ 
as an affine mapping. It is immediate that with  this identification,
$\Af = \R \rtimes \Aff(\T)$ with the proposed action of $\Aff(\T)$ on $\R$,
namely, the group operation is $[b_1\,,\ga_1][b_2\,,\ga_1] = [b_1 + \qq^{\Phi(\ga_1)}b_2\,, \ga_1\ga_2]$.
 
\smallskip

(3) Let $\gf$ be an isometry of $\HT$. Then it is clear that $\gf$ sends
each line $\Lf_v$ to some other line $\Lf_{\tilde v}$; compare with \cite{FaMo}.
Thus, there is some 
$\ga \in \Aut(\T)$ such that $\gf \Lf_v = \Lf_{\ga v}$ for every $v \in V(\T)$.
We claim that $\ga \in \Aff(\T)$, that is, $\ga v^- = (\ga v)^-$ for all $v \in V(\T)$.

For $v \in V(\T)$, let $\tilde v = \ga v$.
Suppose that $\ga v^- \ne \tilde v^-$. Then $\tilde u_1 = \ga v^-$ must be a successor
of $\tilde v$, that is, $\tilde u_1^- = \tilde v$. Also, since $\pp \ge 2$,
there must be some successor $u$ of $v$ ($u^- = v$) such that $\tilde u_2 = \ga u$
is a successor of $\tilde v$. 
Then $\gf$ maps $\Sf_u \cup \Sf_v$ isometrically to $\Sf_{u_1} \cup \Sf_{u_2}$.
Now, writing $k=\hor(v)$ and $\tilde k=\hor(\ga v)$, we have that
$\Sf_u \cup \Sf_v$ is an isometric copy of $\Sf_{k-1}\cup \Sf_k \subset \Hb$, while 
$\Sf_{u_1} \cup \Sf_{u_2}$ consists of two copies of $\Sf_{k-1}$ glued together along
their bottom line. With the metric \eqref{eq:metric}, these two pieces are 
\emph{not}\ isometric. Thus, it must be $\ga v^- =  (\ga v)^-$, and $\ga \in \Aff(\T)$.
We now also see that $\gf \Sf_v = \Sf_{\ga v}$ for all $v \in V(\T)$. 

Now consider an end $\xi\in \bd^*\T$. It follows from the above that
$\gf \Hb_{\xi} = \Hb_{\ga \xi}$.
Thus, there must be an isometry $g_{\xi}$ of $\Hb$ such that for $(z,w) \in \Hb_{\xi}$,
$\gf(z,w) = (g_{\xi} z, \ga w)$. Then $g_{\xi}$ must be either a M\"obius transformation
or a M\"obius transformation followed by reflection along the $y$-axis.
Since $\gf \Lf_v = \Lf_{\ga v}$ for each $v \in V(\T) \cap \geo{\xi\,\om}$, 
in both of the last cases, that M\"obius transformation is in $\Aff(\Hb,\qq)$. 
Now let $\eta \in \bd^*\T \setminus \{\xi\}$ and set $v = \xi \cf \eta$. 
Then $\Hb_{\xi}$ and $\Hb_{\eta}$ coincide below (and including) the line 
$\Lf_v \subset \HT$, whence $g_{\xi}$ and $g_{\eta}$ coincide below the 
line $L_{\hor(v)}$. But this implies that $g_{\xi} = g_{\eta}=:g$ for all
$\xi, \eta \in \bd^*\T$. Every $(z,w) \in \HT$ lies in $\Hb_{\xi}$ for some
$\xi \in \bd^*\T$. Therefore $\gf(z,w) = (gz,\ga w)$ for all $(z,w) \in \HT$.
This means that either $\gf \in \Af$ (when $g$ itself is a M\"obius transformation),
or ${\mathfrak s}\gf \in \Af$ (when $g$ is a M\"obius transformation followed by 
reflection along the $y$-axis). 

The statement abut the co-compact action and factor space is obvious, and 
it is straightforward that the action of $\Af$, as well as ${\mathfrak s}$,
preserve the area element of $\HT$.

\smallskip

(4) We compute the modular function of $\Af$.
Let $db$ be Lebesgue measure on $\R$ and $d\ga$ left Haar measure on $\Aff(\T)$.
It will be useful to normalise $d\ga$ such that 
\begin{equation}\label{eq:stab}
\int_{\Aff(\T)} \uno_{\Stab(o)}(\ga)\, d\ga =1\,,\quad \text{where}
\quad \Stab(x) = \{ \ga \in \Aff(\T) : \ga o = o\}
\end{equation} is the stabiliser of $o$. It is an open-compact subgroup of
$\Aff(\T)$.
It is a straightforward exercise that in 
the $[b,\ga]$-coordinates of the semidirect product, left Haar measure on $\Af$ is 
given by
\begin{equation}\label{eq:HaarAf}
d\gf = \qq^{-\Phi(\ga)}\, db\,d\ga\,.
\end{equation} 
Now let $\gf_0 = [b_0,\ga_0]$, and let $f \in \CC_c(\HT)$, the space of 
continuous, compactly supported functions. Then, using \eqref{eq:modularAffTp}
$$
\begin{aligned}
\int_{\!\Af}  f(\gf\gf_0) \,d\gf 
&= \int_{\Aff(\T)} \int_{\R} \qq^{-\Phi(\ga)} f[b+\qq^{\Phi(\ga)}b_0, \ga\ga_0]\,db\,d\ga \\
&= \qq^{\Phi(\ga_0)}\! \int_{\R} \int_{\Aff(\T)} \qq^{-\Phi(\ga\ga_0)} 
           f[b, \ga\ga_0]\,d\ga\,db \\
&= \qq^{\Phi(\ga_0)}\! \int_{\R} \md_{\T}(\ga_0)^{-1} \!
        \int_{\Aff(\T)} \!\!\qq^{-\Phi(\ga)} f[b, \ga]\,d\ga\,db 
= (\qq/\pp)^{\Phi(\ga_0)} \!\int_{\!\Af}  f(\gf)\,d\gf\,.
\end{aligned}
$$
This yields $\md_{\Af}(\ga_0) = (\pp/\qq)^{\Phi(\ga_0)}$, as proposed. 
\end{proof}
    
\begin{cor}\label{cor:lattice} When $\pp \ne \qq$, there is no \emph{discrete} 
group that acts on $\HT(\qq,\pp)$ with compact quotient.
\end{cor}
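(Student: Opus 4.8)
The plan is to derive the impossibility from the modular function computed in Theorem~\ref{thm:isogroup}, combined with the classical fact that a locally compact group possessing a discrete cocompact subgroup must be unimodular. First I would set up the reduction. Any group acting on $\HT(\qq,\pp)$ with compact quotient acts by isometries, so it is realised as a discrete subgroup $\Gamma$ of the full isometry group $G$ of $\HT$. Since $\HT$ is a proper metric space on which $G$ acts properly (with compact point stabilisers) and cocompactly, a discrete subgroup $\Gamma\le G$ acts on $\HT$ with compact quotient if and only if $\Gamma$ is cocompact in $G$, i.e.\ $G/\Gamma$ is compact. Thus the discrete group hypothesised in the statement would be a discrete cocompact subgroup of $G$.

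Next I would pass from $G$ to the subgroup $\Af=\Af(\qq,\pp)$. By Theorem~\ref{thm:isogroup}, $G$ is generated by $\Af$ and the reflection ${\mathfrak s}$, and ${\mathfrak s}\notin\Af$ (it reverses the orientation of the strips, which every element of $\Af$ preserves). Hence $\Af$ is the clopen, index-two subgroup of orientation-preserving isometries, and $G=\Af\sqcup{\mathfrak s}\,\Af$. Given a discrete cocompact $\Gamma\le G$, the subgroup $\Af\cap\Gamma$ is again discrete and of index at most two in $\Gamma$; because $\Af$ is open and closed in $G$, the natural map $\Af/(\Af\cap\Gamma)\to G/\Gamma$ identifies $\Af/(\Af\cap\Gamma)$ with a clopen, hence compact, subset of the compact space $G/\Gamma$. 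Therefore $\Af\cap\Gamma$ is a discrete cocompact subgroup of $\Af$.

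Now I would invoke the principle that a locally compact group admitting a discrete cocompact subgroup is unimodular, applied to $\Af$. This contradicts the explicit computation $\md_{\Af}(g,\ga)=(\pp/\qq)^{\Phi(\ga)}$ of Theorem~\ref{thm:isogroup}: since the homomorphism $\Phi:\Aff(\T)\to\Z$ is surjective (there are automorphisms in $\Aff(\T)$ shifting the horocycles by any prescribed integer) and $\pp/\qq\neq 1$ when $\pp\neq\qq$, the image $\{(\pp/\qq)^k:k\in\Z\}$ is a nontrivial subgroup of $\R$, so $\Af$ is \emph{non}-unimodular. The contradiction shows that no such discrete $\Gamma$ can exist, which is exactly the assertion of the corollary.

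The main obstacle is the one genuinely non-formal ingredient, namely the lattice$\Rightarrow$unimodularity fact for $\Af$; everything else is bookkeeping with open/closed finite-index subgroups. I would either cite it as classical or establish it directly along these lines: write $\Af=K\,\Lambda$ with $K$ compact (possible since $\Af/\Lambda$ is compact, where $\Lambda=\Af\cap\Gamma$); since $\Lambda$ is discrete it is unimodular, and one checks $\md_{\Af}|_{\Lambda}\equiv 1$; then for any $\gf\in\Af$ the values $\md_{\Af}(\gf^{\,n})=\md_{\Af}(\gf)^{n}$ lie, for all $n\in\Z$, in the bounded set $\md_{\Af}(K)$ (because the continuous homomorphism $\md_{\Af}$ is bounded on the compact set $K$ and trivial on $\Lambda$), which forces $\md_{\Af}(\gf)=1$. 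This is precisely the step I would be most careful to state cleanly, as it is where the geometric hypothesis of cocompactness is converted into the algebraic constraint contradicted by $\md_{\Af}$.
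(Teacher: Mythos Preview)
Your argument is correct and follows the same route as the paper: both invoke the classical fact that a locally compact group admitting a cocompact lattice must be unimodular, and contradict this with the modular function $\md_{\Af}(g,\ga)=(\pp/\qq)^{\Phi(\ga)}$ from Theorem~\ref{thm:isogroup}. The paper's proof is a single sentence to this effect; your careful reduction from the full isometry group to its index-two subgroup $\Af$ and the sketch of the lattice $\Rightarrow$ unimodular principle make explicit what the paper leaves implicit.
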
 

Indeed, such a group would be a co-compact lattice in the isometry group of
$\HT(\qq,\pp)$, which cannot exist, since the latter group is non-unimodular.
When $\pp=\qq$, the situation is different.

\begin{pro}\label{pro:BS}
The Baumslag-Solitar group $\BS(\pp) = \langle \ab, \bb : \ab\bb=\bb^{\pp}\ab \rangle$
embeds as a co-compact, discrete subgroup into $\Af(\pp,\pp)$.
\end{pro}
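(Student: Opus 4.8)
The plan is to exhibit an explicit embedding $\BS(\pp) \to \Af(\pp,\pp)$ and verify that its image is discrete and co-compact. Recall from Theorem~\ref{thm:isogroup} that $\Af(\pp,\pp) = \R \rtimes \Aff(\T_{\pp})$, where the action is $b \mapsto \pp^{\Phi(\ga)} b$. The key observation is that $\BS(\pp)$ has a very similar semidirect-product structure: it is well known that $\BS(\pp) \cong \Z[1/\pp] \rtimes \Z$, where the generator $\ab$ acts on the normal subgroup $\Z[1/\pp]$ (generated by $\bb$ and its $\ab$-conjugates) by multiplication by $\pp$. Here $\bb$ corresponds to $1 \in \Z[1/\pp]$ and $\ab$ to the shift $1 \in \Z$. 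So the strategy is to map $\Z[1/\pp] \hookrightarrow \R$ as the obvious dense subring, and map $\Z \to \Aff(\T_{\pp})$ by sending $1$ to an element $\ga_0$ with $\Phi(\ga_0) = 1$ that realizes the multiplication-by-$\pp$ action.

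First I would fix a concrete $\ga_0 \in \Aff(\T_{\pp})$ with $\Phi(\ga_0) = 1$; for instance choose $\ga_0$ to be a "hyperbolic-type" automorphism that translates along a fixed bi-infinite geodesic through $\om$ by one step in the direction of increasing $\hor$. Then define $\rho: \BS(\pp) \to \Af(\pp,\pp)$ on generators by $\rho(\bb) = [1,\id]$ and $\rho(\ab) = [0,\ga_0]$, and check it respects the relation $\ab\bb = \bb^{\pp}\ab$. Using the group law $[b_1,\ga_1][b_2,\ga_2] = [b_1 + \pp^{\Phi(\ga_1)} b_2,\, \ga_1\ga_2]$ from part~(2) of the proof of Theorem~\ref{thm:isogroup}, one computes $\rho(\ab)\rho(\bb) = [\pp^{\Phi(\ga_0)} \cdot 1,\ga_0] = [\pp,\ga_0]$ while $\rho(\bb)^{\pp}\rho(\ab) = [\pp,\ga_0]$, so the relation holds and $\rho$ extends to a homomorphism. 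Injectivity follows because $\BS(\pp)$ is torsion-free and the composition with $\Phi$ recovers the quotient map $\BS(\pp) \to \Z$, while the kernel of that quotient, $\Z[1/\pp]$, is mapped injectively into $\R$.

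Next I would verify discreteness. The image $\rho(\BS(\pp))$ sits inside $\R \times \Aff(\T_{\pp})$; a sequence converging to the identity must eventually have trivial $\Aff(\T_{\pp})$-component (since the image of $\Z$ under $n \mapsto \ga_0^n$ is discrete in $\Aff(\T)$, as $\Phi(\ga_0^n) = n$ separates these elements), hence lie in $\rho(\Z[1/\pp]) = \Z[1/\pp] \times \{\id\}$, and then coincide with the identity because on this subgroup convergence in $\R$ together with the constraint of living on a single fibre forces equality. I would make this precise using the open-compact stabiliser $\Stab(o)$ from \eqref{eq:stab}: a neighbourhood of the identity in $\Af$ of the form $(-\ep,\ep) \times \Stab(o)$ meets $\rho(\BS(\pp))$ only in the identity, because any nontrivial element either has $\Phi \ne 0$ (so its tree part lies outside $\Stab(o)$) or lies in $\Z[1/\pp]\setminus\{0\}$ (so its $\R$-coordinate, being a nonzero element of $\Z[1/\pp]$, can be bounded away from $0$ after suitable choice of $\ep$ — here one uses that the relevant elements arising near the identity have bounded denominators).

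Finally, co-compactness follows from combining discreteness with the structure already established: by Theorem~\ref{thm:isogroup} the full isometry group acts co-compactly on $\HT(\pp,\pp)$ with quotient a circle of length $\log\pp$, and $\Af(\pp,\pp)$ is unimodular precisely when $\pp = \qq$ (the modular function $\md_{\Af}(g,\ga) = (\pp/\qq)^{\Phi(\ga)}$ is trivial), so a lattice can exist. Concretely I would exhibit a compact fundamental domain for $\rho(\BS(\pp))$ acting on $\HT(\pp,\pp)$: since $\Z[1/\pp]$ is dense in $\R$ but $\rho(\BS(\pp))$ acts on $\Af$ by left translation, one takes the closure of a fundamental domain built from a unit cell $[0,1)$ in the $\R$-direction, a transversal to $\langle\ga_0\rangle$ in $\Aff(\T)$ (a single copy of $\Stab(o)$, which is compact), and quotients further. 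The main obstacle I expect is precisely the discreteness verification: because $\Z[1/\pp]$ is \emph{dense} in $\R$, one cannot naively separate points by the $\R$-coordinate alone, and the argument genuinely relies on the interlocking of the $\Z$-grading $\Phi$ with the denominators appearing in $\Z[1/\pp]$ — elements close to the identity in $\Af$ must simultaneously have small $\Phi$ (forcing bounded denominators) and small $\R$-coordinate, and only the identity satisfies both.
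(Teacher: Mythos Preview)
Your embedding is \emph{not} discrete, and the gap is exactly where you sense it. Under your map $\rho$, the normal subgroup $\Z[1/\pp]\lhd\BS(\pp)$ (generated by the $\ab$-conjugates of $\bb$) lands in $\Z[1/\pp]\times\{\id\}\subset\R\times\Aff(\T)$: you computed $\rho(\bb)=[1,\id]$, and conjugating gives $\rho(\ab^{-l}\bb\,\ab^{l})=[\pp^{-l},\id]$ for every $l\ge 0$. So the tree component of every element of $\rho(\Z[1/\pp])$ is the identity, regardless of the denominator. Since $\Z[1/\pp]$ is dense in $\R$, your image contains $[\pp^{-l},\id]\to[0,\id]$ as $l\to\infty$, and discreteness fails. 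Your proposed rescue (``small $\Phi$ forces bounded denominators'') does not apply: all of these elements already have $\Phi=0$ and lie in $\Stab(o)$, yet their denominators are unbounded. The value of $\Phi$ records only the $\ab$-exponent $n$ in a normal form $\bb^{r}\ab^{n}$; it says nothing about the denominator of $r\in\Z[1/\pp]$.

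What is missing is that the tree must carry more than the $\Z$-grading: it has to detect the \emph{denominators}. The paper does this by identifying $\partial^{*}\T_{\pp}$ with the $\pp$-adic ring $\Q_{\pp}$ and letting $\BS(\pp)$ act on $\T_{\pp}$ by $\pp$-adic affine maps, then taking the \emph{diagonal} embedding $\gamma\mapsto(\gamma,\gamma)$ into $\Aff(\Hb,\pp)\times\Aff(\T_{\pp})$. Under this embedding an element $b=k/\pp^{l}\in\Z[1/\pp]$ with $\pp\nmid k$ acts on $\T$ as translation by a $\pp$-adic number of norm $\pp^{l}$, hence moves the root $o$ when $l>0$. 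Thus elements that are small in the real direction (large $l$) are far from the identity on the tree side, and the product of the real and $\pp$-adic topologies makes $\Z[1/\pp]$ discrete. In short, you sent $\ab$ into $\Aff(\T)$ but sent $\bb$ only into $\R$; the correct construction sends $\bb$ nontrivially into \emph{both} factors.
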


\begin{proof} It is well-known that 
\begin{equation}\label{eq:matrix}
\BS(\pp) = \left\{ \Bigl(\!\begin{smallmatrix} \pp^n & \,k/\pp^l \\[2pt] 0 & 1 \end{smallmatrix}\Bigr)
: k, l, n \in \Z \right\}\,.
\end{equation}
In this representation, 
$\ab = \bigl(\begin{smallmatrix} \pp  & 0\\[1pt] 0 & 1\end{smallmatrix}\bigr)$
and $\bb = \bigl(\begin{smallmatrix} 1  & 1\\[1pt] 0 &
1\end{smallmatrix}\bigr)$.
Thus, it is immediate that $\BS(\pp)$ is a (non-discrete) subgroup of
$\Aff(\Hb,\pp)$.

\smallskip

We now explain how our group acts on $\T = \T_{\pp}\,,$ compare e.g. with
\cite[\S 4.A]{CKW}. The \emph{ring} $\Q_{\pp}$ of $\pp$-adic numbers 
consists of all Laurent series in powers of $\pp$ of the form
\begin{equation}\label{eq:padic}
\uf = \sum_{k=m}^{\infty} a_k\,\pp^k\,,\quad m \in \Z\,,\;
a_k \in \{0, \dots, \pp-1\}\,.
\end{equation}
If $a_m \ne 0$ in \eqref{eq:padic}, then we set $|\uf|_{\pp} = \pp^{-m}$,
the $\pp$-adic norm of $\uf$. In addition, we get the neutral element
$0$  of $\Q_{\pp}$ when $a_k=0$ for all $k$ in \eqref{eq:padic}.
Of course, $|0|_{\pp} = 0$.
Addition and multiplication in $\Q_{\pp}$ extend the respective operations
on those elements \eqref{eq:padic} which are finite sums (i.e., $a_k =0$ for all
but finitely many $k$), performed within the rational numbers. That is, carries 
to higher positions of coefficients that exceed $\pp-1$ have to be taken care
of.

We have the following for all $\uf, \vf \in \Q_{\pp}$ and $m \in \Z$.
\begin{equation}\label{eq:ultra}
\begin{aligned}
{\rm (i)} \qquad &|\uf|_{\pp} = 0 \iff \uf =0\,,\\ 
{\rm (ii)} \qquad &|\uf+\vf|_{\pp} \le \max\{ |\uf|_{\pp}, |\vf|_{\pp}\} \,,\\ 
{\rm (iii)} \qquad &|\uf\, \vf|_{\pp} \le |\uf|_{\pp}\, |\vf|_{\pp} \,,\\
{\rm (iv)} \qquad &|\pp^m\, \vf|_{\pp} = \pp^{-m} |\vf|_{\pp} \,.
\end{aligned}
\end{equation}
If $\pp$ is prime, then we always have equality in (iii), and $\Q_{\pp}$ is a 
field. Otherwise, it is only a ring. 
By (ii), the norm induces an ultrametric. Any metric ball in $\Q_{\pp}$ is open 
and compact, and $\Q_{\pp}$ is totally disconnected (a Cantor set). Let 
$\ol B(\uf, \pp^{-k})$ be the closed ball with radius $\pp^{-k}$ and centre 
$\uf$.
Each of its points is a centre for that ball.
It is the disjoint union of $\pp$ closed balls with radius $\pp^{-k-1}$.
Now consider
$$
H_m = \{ v = \ol B(\uf, \pp^{-m}) : \uf \in \Q_{\pp}\}\,.
$$
This is going to be the horocycle at level $m$ of our tree, and for
$v = \ol B(\uf, \pp^{-m})$ as a vertex of $\T=\T_{\pp}\,$, its predecessor is
$v^-= \ol B(\uf, \pp^{-m+1})$. This gives us the tree structure. We find that
$\bd^*\T = \Q_{\pp}\,$. 

We see that via the matrix representation \eqref{eq:matrix}, $\BS(\pp)$
acts on $\bd^*\T$ by affine transformations of the ring $\Q_{\pp}\,$.
This action extends to the tree: 
if $\gamma = \bigl(\begin{smallmatrix} \pp^n & k/\pp^l \\ 0 & 1 
\end{smallmatrix}\bigr)$
and $v = \ol B(\uf, \pp^{-m})$ then
$$
\gamma v = \bigl\{ \pp^n \vf + k/\pp^l : \vf \in \ol B(\uf, \pp^{-m}) \bigr\} 
= \ol B(\pp^n \uf + k/\pp^l, \pp^{-m-n})
$$
is a closed ball with radius $\pp^{-m-n}$, so that it is another vertex of our 
tree, which lies in $H_{m+n}\,$: (iv) of \eqref{eq:ultra} 
is crucial here. In this way, $\gamma$ defines an element of
$\Aff(\T_{\pp})\,$.

We can now take the diagonal embedding $\gamma \mapsto (\gamma,\gamma)$ of
$\BS(\pp)$ into $\Aff(\Hb,\pp) \times \Aff(\T_{\pp})$. This embedding is
compatible with the level structure of both $\Hb$ and the tree, so that
$\BS(\pp)$ is embedded into $\Af(\pp,\pp)$. It is easily seen to be 
discrete. It is co-compact because the factor space is compact. Indeed, a 
fundamental domain for the action of $\BS(\pp,\pp)$ is obtained as follows:
In $\Hb$, take the Euclidean (!) rectangle $R$ with vertices $\im$, $\pp+ \im$, 
$\pp + \im \pp$ and $\im\pp$. Then
$\{(z,w) \in \HT : z \in R\,,\; w \in [o^-,o]\}$ is a fundamental domain. 
The reader is invited to
elaborate these last details as an exercise; compare once more with \cite{FaMo}.  
\end{proof}

\begin{rmk}\label{rmk:density-tree} 
In the analogy between tree and hyperbolic upper half plane, $\partial^* \T$ corresponds
to the lower boundary line $\R$ of $\Hb$. In this spirit, the natural analogue of Lebesgue
measure on $\R$ is the measure $\la^*$ on $\partial \T$ which corresponds to (suitably 
normalised) Haar measure on the Abelian group $\Q_p$ under the identification of 
$\partial^*\T$ with $\Q_p\,$.
The basic open-closed sets in $\partial^*\T$ (the ultrametric balls) and their 
measures are 
$$
\partial_v^*\T = \{\xi \in \partial \T : v \in \geo{\varpi\,\xi} \} 
\AND \la^*(\partial_v^*\T) = \pp^{-\hor(v)}\,,\quad v \in V(\T).
$$
\end{rmk}

\section{Laplacians with drift}\label{sec:Laplacians}
We now explain our family of natural Laplace operators 
$\Lap^{\HT}=\Lap_{\al,\beta}^{\HT}$ on
$\HT(\qq,\pp)$ with ``vertical drift'' parameters $\al \in \R$ and $\beta > 0$.
Their rigorous construction is carried out in detail in \cite{BSSW}.
Here, we reproduce the basic facts.

\begin{dfn}\label{def:Cinfty} We let  $\CC^\infty(\HT)$ be the set of 
those continuous functions $f$ on $\HT$ such that, for each $v\in V(\T)$, the 
restriction $f_v$ of $f$ to the strip $\Sf_v$ (as in Definition
\ref{dfn:fv}) 
has continuous derivatives $\partial_x^m\partial_y^n f_v(z)$ of all orders 
in the interior $\Sf^o_v$ which satisfy, for all $R>0$,
$$
\sup\bigl\{|\partial_x^m\partial_y^n f_v(z)|:
z=x+\im y \in \Sf^o_{\hor(v)}\,,\; |\RE z| \le R\bigr\}
<\infty\,.
$$
\end{dfn}

Thus, on each strip $\Sf_v\,$, each partial derivative has a continuous
extension $\partial_x^m\partial_y^n f_v(z)$  
to the strip's boundary. Note that when $w^-=v$, it is in general \emph{not} true that 
$\partial_x^m\partial_y^n f_w = \partial_x^m\partial_y^n f_v$
on $\Lf_v = \Sf_v \cap \Sf_w\,$, unless $m=n=0$. We have the (hyperbolic) gradient
$\nabla f$ given by
$$
\nabla f_v(z) = \bigl(y^2 \partial_x f_v(z)\,,\,y^2 \partial_y f_v(z)\bigr)
$$ 
which is defined without ambiguity in the interior of each strip.
However, on any bifurcation line $\Lf_v\,$, we have to distinguish
between all the one-sided limits of the gradient, obtaining the family
$$
\nabla f_v(z) \AND \nabla f_w(z) \quad \text{for all}\; w \in V(\T)\;
\text{with}\; w^-=v\,,\quad (z,v) \in \Lf_v\,.  
$$ 

Let $\CC^\infty_c(\Omega)$
be the space of those functions in $\CC^\infty(\HT)$ that have compact 
support contained in $\Omega\,$.
We shall write
\begin{equation}\label{eq:LT}
\LT = \bigcup_{v \in V(\T)} \Lf_v \AND \HT^o =  \bigcup_{v \in V(\T)} \Sf_v^o = 
\HT \setminus \LT\,.
\end{equation}
For $\al\in \R\,$, $\beta>0$, we define the measure $\ms_{\al,\beta}$
on $\HT$ by
\begin{equation}\label{eq:mab}
\begin{gathered}
d\ms_{\alpha,\beta}(\zf)= \phi_{\al,\beta}(\zf)\,d\zf\quad\text{with}\\[3pt]
\phi_{\al,\beta}(\zf) = \beta^{\hor(v)}\,y^\alpha \quad \text{for}\quad
\zf=(x+\im y,w)\in \Sf_v \setminus \Lf_{v^-}\,,\quad \text{where }\;v \in V(\T)\,,
\end{gathered}
\end{equation}
that is, $w \in (v^-\,,v]$ and $\log_{\qq} y = \hor(w)$.

\begin{dfn}\label{def:ADHT}   For $f\in \CC^\infty(\HT)$
and $\zf=(x+\im y,w)\in \HT^o$, we set
$$
\Lap_{\al,\beta}f(\zf)= y^{2}(\partial_x^2+\partial_y^2)f(\zf)
+\alpha \,y\, \partial_yf(\zf)\,.
$$
Let $\DC^\infty_{\alpha,\beta,c}$ be the
space of all functions $f \in \CC^\infty_c(\HT)$ with the following
properties. 
\\[5pt]
(i) For any $k$,  the  $k$-th iterate $\Lap_{\al,\beta}^k f$, originally 
defined on $\HT^o,$  admits a continuous extension to all of $\HT$ (which then
belongs to $\CC^\infty_c(\HT)$ and is also denoted $\Lap_{\al,\beta}^k f$).
\\[5pt]
(ii) The function $f$, as well as each of its iterates $\Lap_{\al,\beta}^k f$,
satisfies the \emph{bifurcation conditions}
\begin{equation}\label{eq:bif}
\partial_y f_v=\beta\sum_{w\,:\,w^-=v}
\partial_y f_w \quad
\text{on $L_v$ for each $v \in V(\T)\,$.}
\end{equation}
\end{dfn}
$\Lap_{\al,\beta}$ as a differential operator on $\HT^o$ apparently depends only
on $\al$. Dependence on $\beta$ is through the domain of functions
on which the Laplacian acts, which have to satisfy \eqref{eq:bif}.
In the following propositions, we present some of the essential properties
proved in \cite{BSSW}, where additional details can be found.

\begin{pro}\label{pro:saHT}
The space $\mathcal D^\infty_{\alpha,\beta,c}$ is dense in the Hilbert space
${\mathcal L}^2(\HT,\ms_{\alpha,\beta})$. 

The operator $(\Lap_{\al,\beta},\mathcal D^\infty_{\alpha,\beta,c})$ 
is essentially self-adjoint in ${\mathcal L}^2(\HT,\ms_{\alpha,\beta})$. 
\end{pro}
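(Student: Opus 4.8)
The plan is to prove essential self-adjointness by verifying two things: first, that $(\Lap_{\al,\beta}, \DC^\infty_{\alpha,\beta,c})$ is symmetric on $\LC^2(\HT,\ms_{\alpha,\beta})$, and second, that its deficiency spaces are trivial, equivalently that the range of $\Lap_{\al,\beta} \pm \im\,\id$ (or, exploiting that the operator will turn out to be nonpositive, the range of $\Lap_{\al,\beta} - \la\,\id$ for some $\la > 0$) is dense. The density of $\DC^\infty_{\alpha,\beta,c}$ in $\LC^2(\HT,\ms_{\alpha,\beta})$ is the first half of the proposition and should follow from a standard cutoff-and-mollify argument adapted to the strip structure: approximate an arbitrary $\LC^2$ function by one supported on finitely many strips and bounded real part, then smooth it within each strip while arranging the bifurcation conditions \eqref{eq:bif} to hold; this is where the singular geometry along $\LT$ demands care, and I expect to lean on the corresponding construction in \cite{BSSW}.

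For symmetry, the key computation is integration by parts strip by strip. First I would verify that with the weight $\phi_{\al,\beta}$ from \eqref{eq:mab}, the formal expression $\Lap_{\al,\beta}f = y^2(\partial_x^2 + \partial_y^2)f + \al\, y\,\partial_y f$ is, on the interior $\HT^o$ of each strip, the Laplace--Beltrami-type operator associated to the hyperbolic area element weighted by $y^{\al}$ (the factor $\beta^{\hor(v)}$ being locally constant on the interior and hence inert under interior differentiation). Concretely, on $\Sf_v^o$ one checks $y^\al \Lap_{\al,\beta}f = y^{2+\al}\bigl(\partial_x(\partial_x f) + \partial_y(\partial_y f)\bigr) + \al\, y^{1+\al}\partial_y f = \operatorname{div}\bigl(y^\al \nabla_{\mathrm{eucl}} f\bigr)\cdot y^2$ up to the correct bookkeeping, so that the operator is in divergence form with respect to $\ms_{\alpha,\beta}$. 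Then for $f, g \in \DC^\infty_{\alpha,\beta,c}$ I would write $\int_{\HT} (\Lap_{\al,\beta}f)\, g \, d\ms_{\alpha,\beta}$ as the sum over $v \in V(\T)$ of the corresponding strip integrals via \eqref{eq:measure}, apply Green's identity on each strip, and collect the boundary terms living on the lines $\Lf_v$.

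The heart of the argument — and the step I expect to be the main obstacle — is showing that all these boundary contributions cancel. On each line $\Lf_v$ the boundary term from the strip $\Sf_v$ above it carries a normal derivative $\partial_y f_v$ weighted by $\beta^{\hor(v)}\qq^{\al\,\hor(v)}$, while the $\pp$ strips $\Sf_w$ ($w^- = v$) sitting below contribute terms with $\partial_y f_w$ weighted by $\beta^{\hor(v)+1}\qq^{\al\,\hor(v)}$; the factor $\beta$ discrepancy is exactly what the bifurcation condition \eqref{eq:bif}, namely $\partial_y f_v = \beta \sum_{w:\,w^-=v} \partial_y f_w$, is engineered to absorb. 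Making this precise requires tracking orientations of the normals, the jump in the density $\phi_{\al,\beta}$ across $\Lf_v$, and continuity of $f$ itself across the line, and it is where the interplay of the two drift parameters becomes visible. Once the boundary terms vanish one obtains $\langle \Lap_{\al,\beta}f, g\rangle = \langle f, \Lap_{\al,\beta}g\rangle$, establishing symmetry, and since $\Lap_{\al,\beta}^k f \in \DC^\infty_{\alpha,\beta,c}$ for every $k$ by condition (i) of Definition \ref{def:ADHT}, every $f$ in the domain is an analytic vector (or at least the powers grow controllably), which by Nelson's analytic vector theorem — or the iterated-Laplacian criterion used in \cite{BSSW} — upgrades symmetry to essential self-adjointness. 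I would cite the relevant theorem of \cite{BSSW} for this last functional-analytic step rather than reprove it here.
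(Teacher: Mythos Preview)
The paper does not prove this proposition at all: the sentence introducing it says these are ``essential properties proved in \cite{BSSW}'', and the paragraph immediately following the statement notes that ``the detailed construction of the latter in \cite{BSSW} is carried out via Dirichlet form theory.'' So the paper's proof is a bare citation, and the method it points to is the Dirichlet form route (build a closable symmetric form, take its closure, identify the generator, and show that $\DC^\infty_{\alpha,\beta,c}$ is a core), not the symmetric-operator-plus-deficiency-indices route you sketch.

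Your symmetry computation is correct in spirit and is exactly the algebraic fact underlying the whole construction: the weight $\phi_{\al,\beta}$ is designed so that the jump in $\beta^{\hor(v)}$ across each $\Lf_v$ matches the bifurcation condition \eqref{eq:bif}, and the boundary terms in the strip-by-strip Green identity cancel. That part would go through. The gap is in the step from symmetry to essential self-adjointness. The fact that $\Lap_{\al,\beta}^k f \in \DC^\infty_{\alpha,\beta,c}$ for every $k$ does \emph{not} make $f$ an analytic vector: Nelson's criterion requires $\sum_k \|\Lap_{\al,\beta}^k f\|\,t^k/k! < \infty$ for some $t>0$, and no such growth bound is available here (nor would one expect it for a Laplacian on a non-compact space acting on compactly supported functions). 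There is also no ``iterated-Laplacian criterion'' in \cite{BSSW} of the kind you invoke; that paper proceeds through the Dirichlet form and uses completeness/cutoff arguments of Gaffney type to get essential self-adjointness, which is the standard mechanism on non-compact complete spaces.

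A secondary difficulty you underestimate is the density of $\DC^\infty_{\alpha,\beta,c}$ itself: functions in this space must satisfy the bifurcation condition \eqref{eq:bif} not just for $f$ but for \emph{every} iterate $\Lap_{\al,\beta}^k f$, so a naive mollification that fixes \eqref{eq:bif} once is not enough. Constructing a sufficiently rich supply of such functions is one of the genuine technical contributions of \cite{BSSW}. Since you already plan to cite \cite{BSSW} for the final step, the honest thing is to cite it for the whole proposition, as the paper does, and note that the argument there is via Dirichlet forms rather than via analytic vectors.
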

With a small abuse of notation, we write 
$\bigl(\Lap_{\al,\beta},\Dom(\Lap_{\al,\beta})\bigr)$ 
for its unique self-adjoint extension. Indeed, at a higher level of rigour
in \cite[Def. 2.16]{BSSW}, the differential operator of Definition 
\ref{def:ADHT}(i), defined on $\mathcal D^\infty_{\alpha,\beta,c}\,$, is denoted 
${\mathfrak A}_{\al}\,$, and the notation $\Lap_{\al,\beta}$ is reserved for
the extension. The detailed construction of the latter in \cite{BSSW}
is carried out via Dirichlet form theory.

\begin{pro}\label{pro:sg}
\emph{(a)} The heat semigroup $e^{t\Delta_{\alpha,\beta}}$, $t>0$,
acting on $\LC^2(\HT,\ms_{\alpha,\beta})$ admits a continuous positive 
symmetric transition kernel
$
(0,\infty)\times \HT\times\HT\ni (t,\wf,\zf)\mapsto
\hb_{\alpha,\beta}(t,\wf,\zf)
$
such that for all $f\in \CC_c(\HT)$,
$$
e^{t\Delta_{\al,\beta}}f(\zf)=\int_{\HT}
\hb_{\alpha,\beta}(t,\wf,\zf)\,f(\zf)\,d\ms_{\al,\beta}(\zf)\,.
$$
\emph{(b)} For each fixed $(t,\wf)$, the function 
$\zf\mapsto \hb_{\al,\beta}(t,\wf,\zf)$  is
in $\CC^\infty(\HT)$ and satisfies \eqref{eq:bif}.
\\[5pt]
\emph{(c)} The heat semigroup is conservative, that is, 
$\int_{\HT} \hb_{\al,\beta}(t,\wf,\cdot)\,d\ms_{\al,\beta}=1$.
\\[5pt]
\emph{(d)} It sends $\LC^\infty(\HT)$ into $\CC^\infty(\HT)\cap  \LC^\infty(\HT)$
and $\CC_0(\HT)$, the space of continuous functions vanishing at infinity, into itself.
\end{pro}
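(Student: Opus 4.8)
The plan is to realise $\Lap_{\al,\beta}$ as the generator of the strongly local, regular, symmetric Dirichlet form
$$
\mathcal{E}(f,g) = \int_{\HT} \langle \nabla f, \nabla g\rangle\, d\ms_{\al,\beta}\,,
$$
built from the hyperbolic gradient on the interior of each strip, whose domain is the closure of $\DC^\infty_{\alpha,\beta,c}$ in the norm $\mathcal{E}(f,f) + \|f\|_2^2$; this is the construction of \cite{BSSW} underlying Proposition \ref{pro:saHT}. Since the associated operator is self-adjoint, spectral calculus yields the semigroup $(e^{t\Lap_{\al,\beta}})_{t>0}$, which is symmetric and strongly continuous, and — because $\mathcal{E}$ is Markovian — sub-Markovian. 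This at once gives positivity ($f \ge 0 \Rightarrow e^{t\Lap_{\al,\beta}}f \ge 0$) and the $\LC^\infty$-contraction needed later, and forces symmetry of any kernel representing the semigroup.

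For part (a) the task is to upgrade this abstract semigroup to an integral operator with a \emph{continuous} kernel. First I would establish a local Nash (or Sobolev) inequality for $\mathcal{E}$, giving an on-diagonal bound and hence a bounded measurable density $\hb_{\al,\beta}(t,\cdot,\cdot)$. To make it continuous I would run a parabolic De Giorgi--Nash--Moser argument. Away from $\LT$ the operator is smooth and uniformly locally elliptic on a two-dimensional hyperbolic surface, so interior regularity is classical; the real content is continuity across a bifurcation line $\Lf_v$. Here I would verify that, locally, $\HT$ with the measure $\ms_{\al,\beta}$ is volume doubling and supports a scale-invariant Poincar\'e inequality — a neighbourhood of $\Lf_v$ is just $\pp+1$ half-strips of $\Hb$ glued along one boundary line, a bounded-geometry situation once the locally constant weights $\beta^{\hor(v)}$ are absorbed — so that Moser iteration applies and produces a locally H\"older continuous, symmetric kernel on all of $\HT \times \HT$.

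Parts (b) and (d) then follow by combining this smoothing with the form structure. Fixing $(t,\wf)$, the function $\zf \mapsto \hb_{\al,\beta}(t,\wf,\zf)$ solves $\partial_t u = \Lap_{\al,\beta}u$ weakly; interior elliptic bootstrapping makes each restriction $u_v$ smooth in $\Sf_v^o$ with the derivative bounds of Definition \ref{def:Cinfty}, so that $u \in \CC^\infty(\HT)$. The bifurcation conditions \eqref{eq:bif} are precisely the boundary terms arising when one integrates $\mathcal{E}(u,\varphi)$ by parts against test functions $\varphi$ supported near $\Lf_v$: their vanishing for all admissible $\varphi$ is equivalent to membership in $\Dom(\mathcal{E})$ and is therefore inherited by $\hb_{\al,\beta}(t,\wf,\cdot)$. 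For (d), the inclusion $\LC^\infty \to \CC^\infty \cap \LC^\infty$ combines the $\LC^\infty$-contraction with this smoothing, while the Feller property $\CC_0 \to \CC_0$ follows from continuity of the kernel together with an off-diagonal (escape-of-mass) estimate showing $e^{t\Lap_{\al,\beta}}f \in \CC_0$ for $f \in \CC_c(\HT)$, and the density of $\CC_c(\HT)$ in $\CC_0(\HT)$.

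Conservativeness (c) is the one genuinely global assertion and, I expect, the main obstacle, since $\ms_{\al,\beta}(\HT) = \infty$ and $e^{t\Lap_{\al,\beta}}\uno = \uno$ can fail for diffusions of unbounded geometry. I would prove stochastic completeness via Grigor'yan's volume-growth criterion: using Proposition \ref{pro:metric}, where $\dist_{\HT}$ is comparable to $\dist_{\Hb} + (\log\qq)\,\dist_{\T}$, one computes that the $\ms_{\al,\beta}$-volume $V(r)$ of metric balls grows at most exponentially, so that $\int^{\infty} r/\log V(r)\,dr = \infty$ and the semigroup is conservative. Alternatively — and this is the route I would favour for robustness — one can deduce non-explosion directly from the conservative projected processes on $\Hb$, $\T$ and $\R$ described in the Introduction, since the linear vertical drift cannot carry $(X_t)$ to infinity in finite time.
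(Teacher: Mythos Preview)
The paper does not prove this proposition; it is stated explicitly as a summary of results from \cite{BSSW}, with the remark that ``the detailed construction \dots\ is carried out via Dirichlet form theory.'' Your outline is therefore not competing with a proof in this paper but rather sketching what the cited reference does.

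That said, your sketch is broadly faithful to the strategy of \cite{BSSW}: the Dirichlet form realisation, local doubling plus Poincar\'e on glued half-strips, Moser iteration for H\"older continuity of the kernel across bifurcation lines, and the identification of the bifurcation condition \eqref{eq:bif} with the integration-by-parts boundary term are exactly the ingredients used there. One point worth tightening: the Feller property $\CC_0 \to \CC_0$ in \cite{BSSW} is obtained from a Gaussian-type off-diagonal upper bound on the heat kernel (a consequence of the parabolic Harnack inequality), which is stronger and more direct than the vague ``escape-of-mass estimate'' you invoke. For conservativeness, \cite{BSSW} establishes it for general strip complexes via a volume-growth/completeness argument in the spirit of your Grigor'yan criterion; your alternative route through the projected one-dimensional process is heuristically appealing but would require first knowing that $(X_t)$ is a genuine Hunt process with the stated projections, which in the logical order of the paper is a \emph{consequence} of (c), not a premise for it.
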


The general theory of Markov processes tells us that $\Lap_{\al,\beta}$
is the infinitesimal generator of a Hunt process $(X_t)_{t \ge 0}\,$. This
is our Brownian motion on $\HT$.
It is defined for every starting point $\wf\in\HT$, has infinite life time 
and continuous sample paths.
Its family of distributions $(\mathbb P^{\alpha,\beta}_\wf )_{\wf\in \HT}$
on $\boldsymbol{\Omega}=\mathcal C ([0,\infty]\to \HT)$ is determined by
the one-dimensional distributions
$$
\mathbb P^{\alpha,\beta}_{\wf}[X_t\in U]
= \int_U \hb_{\al,\beta}(t,\wf,\zf) \,d\ms_{\al,\beta}(\zf)
= \int_U \pb_{\al,\beta}(t,\wf,\zf) \,d\zf\,,
$$
where $U$ is any Borel subset of $\HT$ and
$$
\pb_{\al,\beta}(t,\wf,\zf) = \hb_{\al,\beta}(t,\wf,\zf)\,\phi_{\al,\beta}(\zf)
$$
with the function $\phi_{\al,\beta}$ as in \eqref{eq:mab}. We note that this 
transition density with respect to $d\zf$ is invariant under the action of the
group $\Af$ of Theorem \ref{thm:isogroup}:
\begin{equation}\label{eq:groupinvariance}
\pb_{\al,\beta}(t,\gf\wf,\gf\zf) = \pb_{\al,\beta}(t,\wf,\zf) 
\quad\text{for all}\quad t > 0\,,\; \wf, \zf \in \HT \;\text{and}\; 
\gf \in \Af\,.
\end{equation}
We next say a few words about the natural projections of $\HT$.
We have 
$$
\begin{gathered}
\pi^{\Hb}: \HT \to \Hb\,,\;\ \zf = (z,w) \mapsto z\,,\quad
\pi^{\T}: \HT \to \T\,,\;\ \zf = (z,w) \mapsto w\,, \AND
\\
\pi^{\R}: \HT \to \R\,,\;\ \zf=(z,w) \mapsto \log_{\qq} \IM(z). 
\end{gathered}
$$
We also interpret $\pi^{\R}$ as a projection $\Hb \to \R$, where
$z \mapsto \log_{\qq} \IM(z)$, and as a projection $\T \to \R$,
where $w \mapsto \hor(w)$. Thus, the following diagram commutes.
$$
\beginpicture 

\setcoordinatesystem units <.54mm,.54mm>
\setplotarea x from -20 to 20, y from -20 to 23

\arrow <6pt> [.2,.67] from 0 17 to 0 -17
\arrow <6pt> [.2,.67] from 3 17 to 17 3
\arrow <6pt> [.2,.67] from -3 17 to -17 3
\arrow <6pt> [.2,.67] from -17 -3 to  -3 -17
\arrow <6pt> [.2,.67] from  17 -3 to 3 -17

\put {$\HT$} [c] at 0 20
\put {$\Hb$} [c] at -20 0 
\put {$\T$} [c] at 20 0
\put {$\R$} [c] at 0 -20

\put {$\pi^{\Hb}$} [rb] at -9 10
\put {$\pi^{\T}$} [lb] at 10.5 10 
\put {$\pi^{\R}$} [rt] at -10.5 -9.5
\put {$\pi^{\R}$} [lt] at 9 -9.5 
\put {$\pi^{\R}$} [lt] at 0.5 1

\endpicture
$$
The ``sliced'' hyperbolic plane as in Figure~3 can be interpreted as 
$\HT(\qq,1)$, that is, the tree is $\Z$, the bi-infinite line graph.
Everything that has been said above also applies here, so that
we have the operator $\Lap^{\Hb}_{\al,\beta}$ on $\Hb$. 

Analogously, we have a Laplacian $\Lap^{\T}_{\al,\beta}$ on the metric
tree, introduced in the same way as above. However, we should take care
of the slightly different parametrisation, that is, the stretching factor
$\log \qq$ in the construction of $\HT$, while in $\T$, each edge $[v^-,v]$
corresponds to the real interval $[\hor(v)-1\,,\hor(v)]$. The functions that
we consider now depend on one real variable in each open edge.
We write $f_v$ for the restriction of $f: \T \to \R$ to $[v^-,v]$.
We have to redefine the analogue of the measure of \eqref{eq:mab}:

\begin{equation}\label{eq:mabT}
\begin{gathered}
d\ms_{\alpha,\beta}^{\T}(w)= \phi_{\al,\beta}^{\T}(w)\,dw\quad\text{with}\\[3pt]
\phi_{\al,\beta}^{\T}(w) = \beta^{\hor(v)}\,\qq^{(\al-1)\hor(w)}\,\log \qq\quad \text{for}\quad
w \in (v^-,v]\,,
\end{gathered}
\end{equation}
where $v \in V(\T)$ and (recall) $dw$ is
the standard Lebesgue measure in each edge. The space $\CC^\infty(\T)$
is defined as in Definition \ref{def:Cinfty}, considering the edges of
$\T$ as the strips. 
The analogues of the crucial Definition \ref{def:ADHT} plus the bifurcation 
condition 
\eqref{eq:bif} now become the following:
Every $f \in \Dom(\Lap^{\T}_{\al,\beta})
\cap \CC^\infty(\T)$ must satisfy for every $v \in V(\T)$ 
\begin{equation}\label{eq:bif-LapT}
\begin{aligned} f_v'(v)&=\beta\sum_{w\,:\,  w^- = v} f_w'(v)\AND\\
\Lap^{\T}_{\al,\beta}f 
&=\frac{1}{(\log \qq)^2} f'' +  \frac{\alpha-1}{\log \qq} f'\quad 
\text{in the open edge}\;(v^-,v)\,.
\end{aligned}
\end{equation}
Finally, the analogue on the real line is comprised in the above by identifying
$\R$ with the tree with branching number $1$ (degree 2). In this case, the
vertices are the integers, the edges are the intervals $[k-1\,,\,k]$, where
$k \in \Z$, and the Laplacian becomes $\Lap^{\R}_{\al,\beta}\,$. Its definition
as a differential operator in each open interval $(k-1\,,\,k)$ is the same 
as in \eqref{eq:bif-LapT}, while the bifurcation condition becomes 
$f'(k-) = \beta\, f'(k+)$ for all $k \in \Z$.

With these modifications, propositions \ref{pro:saHT} and \ref{pro:sg}  
apply to all those Laplacians. 
We write $\hb_{\al,\beta}^{\Hb}\,$, $\hb_{\al,\beta}^{\T}$ and 
$\hb_{\al,\beta}^{\R}$ for the respective associated transition kernels.

\begin{pro}\label{pro:projections} Let $(X_t)$ be the process on $\HT(\qq,\pp)$ 
whose infinitesimal generator is $\Lap_{\al,\beta}\,.$
Set 
$$
Z_t=\pi^{\Hb}(X_t)\,,\quad W_t=\pi^{\T}(X_t)\,, \AND Y_t=\pi^{\R}(X_t)\,,\quad 
t \ge 0.
$$
\emph{(a)}  The process $(Z_t)$ is a Markov process on $\Hb$ whose 
infinitesimal generator is $\Lap^{\Hb}_{\al,\beta\pp}$. Its
transition kernel with respect to the measure $\ms_{\al,\beta\pp}^{\Hb}$
is $\hb_{\al,\beta\pp}^{\Hb}\,$.
\\[4pt]
\emph{(b)}  The process $(W_t)$ is a Markov process on $\T$ whose 
infinitesimal generator is $\Lap^{\T}_{\al,\beta}$. Its
transition kernel with respect to the measure $\ms_{\al,\beta}^{\T}$
is $\hb_{\al,\beta}^{\T}\,$.
\\[4pt]
\emph{(c)}  The process $(Y_t)$ is a Markov process on $\R$ whose 
infinitesimal generator is $\Lap^{\R}_{\al,\beta\pp}$. Its
transition kernel with respect to the measure $\ms_{\al,\beta\pp}^{\R}$
is $\hb_{\al,\beta\pp}^{\R}\,$.
\end{pro}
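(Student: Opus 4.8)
The plan is to establish, for each of the three projections $\pi = \pi^{\Hb},\pi^{\T},\pi^{\R}$, a semigroup intertwining identity of the form $e^{t\Lap_{\al,\beta}}(f\circ\pi) = (Q_t f)\circ\pi$, where $Q_t$ is the heat semigroup of the claimed generator on the base space. Once this holds, Dynkin's criterion immediately yields that $\pi(X_t)$ is Markov with transition semigroup $(Q_t)$, since $\mathbb{E}^{\al,\beta}_{\wf}[f(\pi(X_t))] = e^{t\Lap_{\al,\beta}}(f\circ\pi)(\wf)$ then depends on $\wf$ only through $\pi(\wf)$. Conceptually, the reason the left-hand side is constant on the fibres of $\pi$ is the group invariance \eqref{eq:groupinvariance}: the subgroup $\Af_0 = \{(g,\ga)\in\Af : \Phi(\ga)=0\}$, consisting of horizontal translations paired with level-preserving tree automorphisms, stabilises each fibre and, by transitivity of $\Aff(\T)$ on horocycles and of the translations on horizontal lines, acts transitively on it; since $d\zf$ and $\pb_{\al,\beta}$ are $\Af$-invariant, the $\Af_0$-invariance of $f\circ\pi$ is inherited by $e^{t\Lap_{\al,\beta}}(f\circ\pi)$.

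First I would verify the generator intertwining on smooth functions, e.g.\ $\Lap_{\al,\beta}(f\circ\pi^{\Hb}) = (\Lap^{\Hb}_{\al,\beta\pp} f)\circ\pi^{\Hb}$, and analogously for $\pi^{\T}$ and $\pi^{\R}$. In the interior of each strip this is immediate, because there $\Lap_{\al,\beta}$ acts as the $\beta$-independent operator $y^2(\partial_x^2+\partial_y^2)+\al y\,\partial_y$, which is literally $\Lap^{\Hb}_{\al,\cdot}$ in the case of $\pi^{\Hb}$ and, after the change of variables $t=\log_{\qq}y$ on radial functions, the stated operators on $\T$ and $\R$. The essential point is the behaviour at the bifurcation lines. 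For $F = f\circ\pi^{\Hb}$ (and likewise $f\circ\pi^{\R}$), the restriction $F_w$ is the \emph{same} function on each of the $\pp$ successor strips $\Sf_w$ with $w^-=v$, so the one-sided normal derivatives collapse and \eqref{eq:bif} reads $\partial_y F_v = \beta\,\pp\,\partial_y F_w$ at $\Lf_v$; comparing with the bifurcation condition for $\HT(\qq,1)=\Hb$ shows that $F$ lies in the relevant domain exactly when $f$ satisfies that condition with parameter $\beta\pp$. This is precisely where the factor $\pp$ in parts (a) and (c) originates. By contrast, for $F = f\circ\pi^{\T}$ the restrictions $F_w$ on the $\pp$ successor edges are genuinely distinct, the sum in \eqref{eq:bif} is not collapsed, and after cancelling the common factor $(\qq^{\hor(v)}\log\qq)^{-1}$ the condition becomes $f_v'(v)=\beta\sum_{w:w^-=v}f_w'(v)$, i.e.\ \eqref{eq:bif-LapT} with the \emph{same} $\beta$ — this is why (b) carries $\beta$ rather than $\beta\pp$.

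With the generator intertwining in hand, I would promote it to the semigroup level by a uniqueness argument: for bounded $f$ in the domain of the base Laplacian, the function $u(t,\cdot)=(Q_tf)\circ\pi$ solves $\partial_t u = \Lap_{\al,\beta}u$ with initial datum $f\circ\pi$ (using that $Q_tf$ stays in the domain, so that the intertwining applies to it), and by conservativeness (Proposition \ref{pro:sg}(c)) and the resulting uniqueness of bounded solutions of the heat equation on $\HT$ it must coincide with $e^{t\Lap_{\al,\beta}}(f\circ\pi)$. The symmetrising measures and kernels are then pinned down: each base differential operator together with its bifurcation condition is essentially self-adjoint on the appropriate $\LC^2$-space — the base analogue of Proposition \ref{pro:saHT}, with the measures \eqref{eq:mab}, \eqref{eq:mabT} taken at parameters $\beta\pp$ (for $\Hb$ and $\R$), respectively $\beta$ (for $\T$) — so its semigroup is represented by the stated kernels $\hb^{\Hb}_{\al,\beta\pp}$, $\hb^{\T}_{\al,\beta}$, $\hb^{\R}_{\al,\beta\pp}$. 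The main obstacle I anticipate is exactly the bifurcation-line bookkeeping: one must check carefully that pullbacks of domain functions remain in the domain even though they are not compactly supported (the fibres of $\pi^{\Hb}$ and $\pi^{\R}$ are infinite), which is where the one-sided normal derivatives and the singular structure of $\HT$ enter and where the count of the $\pp$ successor strips must be done correctly.
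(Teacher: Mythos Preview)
The paper does not give a proof of this proposition; it is stated without argument, implicitly as part of the package of ``essential properties proved in \cite{BSSW}'' to which the surrounding text refers. So there is no paper proof to compare against in detail.

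Your approach is the correct and natural one, and in particular your identification of the mechanism behind the $\beta$ versus $\beta\pp$ dichotomy is exactly right: the pullback along $\pi^{\Hb}$ (or $\pi^{\R}$) is the same on all $\pp$ successor strips $\Sf_w$ with $w^-=v$, so the sum in \eqref{eq:bif} collapses and the effective bifurcation parameter becomes $\beta\pp$, whereas for $\pi^{\T}$ the restrictions to different successor edges are distinct and the bifurcation condition transfers with $\beta$ unchanged. The group-invariance observation via the subgroup $\Af_0$ acting transitively on fibres is also the right conceptual explanation for why the semigroup preserves the class of pulled-back functions. The technical point you flag --- that $f\circ\pi$ is never compactly supported on $\HT$ when the fibres are infinite, so one cannot stay inside $\DC^{\infty}_{\al,\beta,c}$ --- is genuine; it is handled in \cite{BSSW} by working with the Dirichlet form and the self-adjoint extension rather than with the core of compactly supported functions, but your uniqueness-of-bounded-solutions route (via conservativeness from Proposition~\ref{pro:sg}(c)) is an equally valid way around it.
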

 
\begin{dfn}\label{dfn:exit}
For any open domain $\Omega \subset \HT$, we let 
$\tau^{\Omega} = \inf \{ t>0: X_t \in \HT \setminus \Omega\}$ be the 
\emph{first exit time} of $(X_t)$ from $\Omega$,
and if $\tau = \tau^{\Omega} < \infty$ almost surely for the starting point
$X_0 = \wf \in \Omega$, then we write $\mu_{\wf}^{\Omega}$ for the
distribution of $X_{\tau}\,$. \end{dfn}

$\mu_{\wf}^{\Omega}$ is a probability measure which usually
is supported by $\partial \Omega$ (we do not specify the meaning of ``usually'';
for the sets that we are going to consider, this will be true).
We shall use analogous notation on $\Hb$, $\T$ and $\R$. We note that
\begin{equation}\label{eq:inv-meas}
\mu_{\wf}^{\Omega}(B) = \mu_{\gf\wf}^{\gf\Omega}(\gf B) \quad\text{for every}\; 
\gf \in \Af \;\text{and Borel set}\; B \subset \HT \,.
\end{equation}

\begin{dfn}\label{def:harmonic} Let $\Omega \subset \HT$ be open.
A continuous function $f:\Omega  \to \R$ is called \emph{harmonic on $\Omega$}
if for every open, relatively compact set $U$ with $\ol U \subset \Omega$,
$$
f(\zf) = \int f\, d\mu_{\zf}^U \quad \text{for all}\; \zf \in U\,.
$$
\end{dfn}

From the classical analytic viewpoint, this defintion may be unsatisfactory;
``harmonic'' should mean ``annihilated by the Laplacian'' (as a differential operator). 
However, for general open domains in $\HT$, the correct formulation in these terms is 
quite subtle in view of the relative location of the bifurcations. 
 
More details will be stated and used in \cite{BSSW3}.

\section{Brownian motion and the induced random walks}\label{sec:rw}
 
Our basic approach is to study BM on $\HT$ via the random walk resulting from
observing the processes during its successive visits in the set $\LT$ of all 
bifurcation lines.

Thus, we define the stopping times $\tau(n)$, $n \in \N_0\,$, 
\begin{equation}\label{eq:stop}
\tau(0)=0, \quad \tau(n+1) 
= \inf\bigl\{ t > \tau(n) : Y_t \in \Z \setminus \{ Y_{\tau(n)} \}\bigr\}\,.
\end{equation}
They are not only the times of the successive visits of $(Y_t)$ in $\Z$:
by Proposition \ref{pro:projections}, if $X_0$ lies in some open strip $\Sf_v^o$, 
then $\tau(1)$ is the
exit time from that open strip, that is, the instant when $X_t$ first meets a point
on $\Lf_v \cup \Lf_{v^-}\,$.  If $X_{\tau(n)} \in \Lf_v$ for some $v \in V(\T)$
(which holds for all $n \ge 1$, and possibly also for $n=0$), then
$\tau(n+1)$ is the first instant $t > \tau(n)$ when $X_t$ meets one of the
bifurcation lines $\Lf_{v^-}$ or $\Lf_w$ with $w^- = v$.
The $\tau(n)$ are also  the times of 
the successive visits of $(Z_t)$ in the union of all the lines $\Lf_k$
that subdivide $\Hb$, as well as the times of the successive visits of $(W_t)$ in 
the vertex set $V(\T)$ of $\T$. 
Later on, we shall also need the integer random variables $\nb_t$, defined by
\begin{equation}\label{eq:nt}
\tau(\nb_t) \le t < \tau(\nb_t+1)\,,\quad \text{where}\; t \ge 0\,,
\end{equation}
as well as the stopping time
\begin{equation}\label{eq:sig}
\sigma = \inf\bigl\{ t > 0 : Y_t \in \{ -1, 1\}\bigr\}\,, \quad
\text{where} \quad Y_0 =y_0 \in [-1\,,\,1].
\end{equation}
This is the exit time from $[-1\,,\,1]$.
Note that $\sigma = \tau(1)$ when  $y_0 = 0$, but not when $0 < |y_0| < 1$.

\begin{lem}\label{lem:tau-finite} For any starting point in $\R$, 
resp. $[-1\,,\,1]$,
the stopping times $\tau(1)$ and $\sigma$ are almost surely finite.
\end{lem}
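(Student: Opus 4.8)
The plan is to work entirely with the projected process $(Y_t)$, which by Proposition \ref{pro:projections}(c) has generator $\Lap^{\R}_{\al,\beta\pp}$. As a differential operator this is, in the interior of each unit interval $(k-1,k)$, the \emph{constant-coefficient} operator $\tfrac{1}{(\log\qq)^2}\tfrac{d^2}{dx^2}+\tfrac{1-\al}{\log\qq}\tfrac{d}{dx}$, i.e.\ a Brownian motion with constant drift; only the gluing (bifurcation) condition $f'(k-)=\beta\pp\,f'(k+)$ at the integers (the version of \eqref{eq:bif-LapT} with $\beta$ replaced by $\beta\pp$) carries the dependence on $\beta\pp$. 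Both stopping times are first exit times of this one-dimensional diffusion from a \emph{bounded} interval: if $y_0\in(k-1,k)$ then $\tau(1)$ is the exit time from $(k-1,k)$ and no integer is met beforehand; if $y_0=k\in\Z$ then $\tau(1)$ is the exit time from $(k-1,k+1)$; and $\sigma$ is the exit time from $(-1,1)$. In the last two cases the open interval contains exactly one bifurcation point in its interior. It therefore suffices to prove the general statement: the exit time $T$ of $(Y_t)$ from a bounded interval $I$, started at $y_0\in I$, satisfies $\Ex[T]<\infty$.

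First I would construct a bounded \emph{mean-exit-time candidate}: a continuous $u\ge 0$ on $\overline{I}$ that vanishes on $\partial I$, solves $\Lap^{\R}_{\al,\beta\pp}u=-1$ in each open subinterval, and satisfies the bifurcation matching $u'(k-)=\beta\pp\,u'(k+)$ at the interior integer (if present). Because the operator has constant coefficients, the general solution of $\tfrac{1}{(\log\qq)^2}u''+\tfrac{1-\al}{\log\qq}u'=-1$ on each subinterval is explicit (an affine function plus an exponential $\qq^{-(1-\al)x}$, or a quadratic when $\al=1$), so building $u$ reduces to a finite linear system fixing the boundary values, imposing continuity, and imposing the derivative jump at the interior integer.

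The key point is solvability of that system, for which I would invoke a maximum principle for the associated homogeneous problem $\Lap^{\R}_{\al,\beta\pp}u=0$, $u|_{\partial I}=0$, with the same matching at the interior integer $k$. On each subinterval a homogeneous solution has the form $A+B\,\qq^{-(1-\al)x}$ and is hence \emph{monotone}, so a nonzero $u$ would attain its extremum at $k$; but then $u'(k-)$ and $u'(k+)$ have opposite signs, which is incompatible with $u'(k-)=\beta\pp\,u'(k+)$ precisely because $\beta\pp>0$. Thus the only homogeneous solution is $u\equiv 0$, the inhomogeneous problem has a unique bounded solution, and the superharmonicity $\Lap^{\R}_{\al,\beta\pp}u=-1<0$ together with $u|_{\partial I}=0$ forces $u\ge 0$. \textbf{This is exactly where the singular bifurcation structure enters, and I expect it to be the main obstacle:} one must verify both that such $u$ lies in the domain of the generator (so that Dynkin's formula applies up to $T$) and that the positivity $\beta\pp>0$ of the gluing coefficient is what renders the matching condition compatible with the maximum principle.

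Finally, since $u$ is smooth on each edge, satisfies the bifurcation condition, and hence belongs (locally, up to the exit time) to the domain of $\Lap^{\R}_{\al,\beta\pp}$, Dynkin's formula applied to $u$ and the stopped process gives $\Ex\bigl[u(Y_{t\wedge T})\bigr]-u(y_0)=\Ex\bigl[\int_0^{t\wedge T}\Lap^{\R}_{\al,\beta\pp}u(Y_s)\,ds\bigr]=-\Ex[t\wedge T]$. As $u\ge 0$, this yields $\Ex[t\wedge T]\le u(y_0)$, and letting $t\to\infty$ by monotone convergence we obtain $\Ex[T]\le u(y_0)<\infty$; in particular $T<\infty$ almost surely, which covers both $\tau(1)$ and $\sigma$. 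In the generic case $y_0\notin\Z$ of $\tau(1)$ there is no interior bifurcation at all, the function $u$ is then given by the single explicit formula on $(k-1,k)$, and the maximum-principle step can be skipped entirely.
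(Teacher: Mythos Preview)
Your argument is correct and in fact yields the stronger conclusion $\Ex_y[T]<\infty$, but the paper takes a shorter route. Rather than building the mean-exit-time function $u$ with $\Lap^{\R}_{\al,\beta\pp}u=-1$ and invoking Dynkin's formula, the paper considers directly $g(y)=\Prob_y[\sigma<\infty]$ (resp.\ $h(y)=\Prob_y[\tau(1)<\infty]$), observes that it is a weak solution of the \emph{homogeneous} Dirichlet problem $\Lap^{\R}_{\al,\beta\pp}g=0$ with boundary values $1$, upgrades it to a strong solution via \cite[Theorem~5.9]{BSSW}, and then notes that the unique such solution is the constant $1$. The uniqueness step is exactly the maximum-principle argument you spell out for the homogeneous problem, so the two proofs share that core; what differs is that the paper avoids the inhomogeneous construction and Dynkin's formula entirely, at the cost of citing the weak-to-strong regularity result from \cite{BSSW} to justify that the probabilistically defined $g$ actually solves the broken ODE. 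Your approach trades that citation for the (equally delicate) claim that your explicit $u$ lies in the generator's domain, and it delivers finite expectation rather than mere a.s.\ finiteness, which is not needed here but is of independent interest.
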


\begin{proof}
We start with $\sigma$. Consider the function $g(y) = \Prob_y[\sigma < \infty]$
on $[-1\,,1]$. It is a weak solution of the Dirichlet problem 
$\Lap^{\R}_{\al,\beta\pp}g = 0$ on $(-1\,,\,1)$ with boundary values 
$1$ at $\pm 1$. By \cite[Theorem 5.9]{BSSW} (in a simplified version, because
here we are dealing with the infinite line as a metric graph), $g$ is a 
strong solution. Thus, $g$
satisfies the following ``broken'' differential equation, where we
have to use $\Lap^{\R}_{\al,\beta\pp}$.
\begin{equation}\label{eq:broken}
\frac{1}{(\log \qq)^2} g'' +  \frac{\alpha-1}{\log \qq} g' = 0\,,\qquad
g'(0-) =\beta\pp \, g'(0+)\,,
\end{equation}
with $g(\pm1) =1$.
The unique solution is $g \equiv 1$, whence 
$\Prob_y[\sigma< \infty] = g(y) = 1$.

Now let us consider $\tau(1)$. By \eqref{eq:groupinvariance}, the transition 
density of  $(Y_t)$ is invariant under translation by integers. Therefore we 
may suppose that the starting point is in 
$[0\,,\,1)$. If it is $0$ then $\tau(1)=\sigma$, so we restrict to starting points
$y \in  (0\,,\,1)$. Set $h(y) = \Prob_y[\tau(1) < \infty]$.
Then $h$ satisfies the differential equation
$$
\frac{1}{(\log \qq)^2} h'' +  \frac{\alpha-1}{\log \qq} h' = 0
$$
on the interval $(0\,,\,1)$, with boundary values $h(0) = h(1) = 1$.
Again, the unique solution is $h \equiv 1$, whence $\Prob_y[\tau(1)< \infty] =1$.
\end{proof}

We shall need detailed computations regarding the two integer random variables
$$
\tau = \tau(2) - \tau(1) \AND Y = Y_{\tau(2)} - Y_{\tau(1)},
$$
in particular their expected values and variances. Note that
$Y$ takes the values $\pm 1$.

\begin{pro}\label{pro:tau} 
\emph{(a)} The increments $\tau(n)-\tau(n-1)$,
$n \ge 1$, are independent and almost surely finite.
\\[4pt]
\emph{(b)} They are identically distributed for $n\ge 2$, and when $Z_0 \in \LT$,
then also $\tau(1)$ has the same distribution.\\[4pt]
\emph{(c)}
Let $\;  b = \dfrac{(\alpha-1)\log \qq}{2} \;$ and consider the real functions
$\; s(\la) =  b^2 + (\log \qq)^2\,\la\;$ and 
$$
r(\la) = (\beta\,\pp +1) \sum_{n=0}^{\infty} \frac{s(\la)^n}{(2n)!} + 
(\beta\,\pp - 1)\,b\, \sum_{n=0}^{\infty} \frac{s(\la)^n}{(2n+1)!}\,, \quad \la \in \R\,.
$$
Then the random variables $Y$ and $\tau$ defined above are independent,
$$
\Ex(e^{-\la \tau}\uno_{[Y=1]}) =\beta\, \pp \,e^b/r(\la) \AND
\Ex(e^{-\la \tau}\uno_{[Y=-1]}) = e^{-b}/r(\la). 
$$
\emph{(d)} 
In particular, setting $\rha = \beta \pp \,\qq^{\alpha-1}$.
$$
\begin{aligned}
&\Prob[Y = 1] = \frac{\rha}{\rha+1}\,,\quad 
\Prob[Y = -1] = \frac{1}{\rha+1}\,,\\[3pt]
&\Ex(Y) = \frac{\rha-1}{\rha+1}\,,\AND 
\Var(Y) = \frac{4\rha}{(\rha+1)^2}\,.
\end{aligned}
$$
\emph{(e)} The Laplace transform $\la \mapsto \Ex(e^{-\la \tau})
= (\rha+1)e^{-b}\big/r(\la)$
is analytic in a neighbourhood of $0$, so that $\tau$ has finite exponential moment
$\Ex(e^{\la_0 \tau})$ for some $\la_0 > 0$.
\\[4pt]
\emph{(f)} The expectation and variance of $\tau$ are
$$
\Ex(\tau) = r'(0)e^b/(\rha+1) \AND \Var(\tau) = \Ex(\tau)^2 - r''(0)e^b/(\rha+1).
$$ 
\end{pro}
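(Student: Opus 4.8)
The plan is to push everything down to the one-dimensional height process $(Y_t)$ and then solve a ``broken'' eigenvalue equation, exactly in the spirit of the proof of Lemma~\ref{lem:tau-finite}. For (a) and (b), recall from Proposition~\ref{pro:projections}(c) that $(Y_t)$ is Markov on $\R$ with generator $\Lap^{\R}_{\al,\beta\pp}$, and that by \eqref{eq:groupinvariance} its transition density is invariant under integer translations. For $n\ge 1$ the value $Y_{\tau(n)}$ is an integer, so the strong Markov property at $\tau(n)$ makes the increment $\bigl(\tau(n+1)-\tau(n),\,Y_{\tau(n+1)}-Y_{\tau(n)}\bigr)$ independent of the past, and integer-translation invariance makes its law independent of $n$; this gives independence and the common law for $n\ge 2$, and the same argument at $\tau(0)=0$ covers $\tau(1)$ when $X_0\in\LT$ (so $Y_0\in\Z$). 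Finiteness is Lemma~\ref{lem:tau-finite}. In particular $(\tau,Y)$ has the law of $(\sigma,Y_\sigma)$ under $\Prob_0$: started at the integer $0$, the continuity of $Y$ forces the first integer $\ne 0$ it hits to be $\pm 1$, so $\tau(1)=\sigma$ with $\sigma$ as in \eqref{eq:sig}.

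The core is therefore to compute $u_{\pm}(0)$, where
$$
u_{\pm}(y)=\Ex_y\bigl(e^{-\la\sigma}\,\uno_{[Y_\sigma=\pm 1]}\bigr),\qquad y\in[-1,1].
$$
As in Lemma~\ref{lem:tau-finite}, these are \emph{a priori} weak solutions -- and, by \cite[Theorem~5.9]{BSSW}, strong solutions -- of the $\la$-eigenvalue problem for $\Lap^{\R}_{\al,\beta\pp}$ on $(-1,0)\cup(0,1)$: each $u_{\pm}$ satisfies $\tfrac{1}{(\log\qq)^2}u''+\tfrac{1-\al}{\log\qq}u'=\la u$ off the origin, the bifurcation condition $u'(0-)=\beta\pp\,u'(0+)$, and the boundary data $u_+(1)=1,\ u_+(-1)=0$ and $u_-(1)=0,\ u_-(-1)=1$. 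The characteristic roots of this ODE are $-b\pm\sqrt{s(\la)}$, so on each sub-interval the solution is $e^{-by}\bigl(A\cosh(\sqrt{s}\,y)+B\sinh(\sqrt{s}\,y)\bigr)$; imposing continuity at $0$, the bifurcation condition, and the boundary values determines the four constants, and evaluating at $y=0$ yields the two transforms. Using $\sum_n s^n/(2n)!=\cosh\sqrt{s}$ and $\sum_n s^n/(2n+1)!=\sinh(\sqrt{s})/\sqrt{s}$ identifies the common denominator as $r(\la)=(\beta\pp+1)\cosh\sqrt{s}+(\beta\pp-1)\tfrac{b}{\sqrt{s}}\sinh\sqrt{s}$. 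Independence of $Y$ and $\tau$ then falls out because the ratio $\Ex(e^{-\la\tau}\uno_{[Y=1]})/\Ex(e^{-\la\tau}\uno_{[Y=-1]})=\beta\pp\,e^{2b}$ does not depend on $\la$, so each transform factors as $\Prob[Y=\pm1]\cdot\Ex(e^{-\la\tau})$.

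The remaining parts are bookkeeping. For (d) I set $\la=0$; since $\tau<\infty$ a.s., the transforms give the probabilities, and with $s(0)=b^2$ and $e^{2b}=\qq^{1-\al}$ one gets $r(0)=\beta\pp\,e^{b}+e^{-b}=(\rha+1)e^{-b}$ and hence $\Prob[Y=1]=\rha/(\rha+1)$; then $\Ex(Y)=\Prob[Y=1]-\Prob[Y=-1]$ and, since $Y^2=1$, $\Var(Y)=1-\Ex(Y)^2$. Summing the two transforms gives $\Ex(e^{-\la\tau})=(\rha+1)e^{-b}/r(\la)$. For (e), $r$ is an entire function of $\la$ (both series are entire in $s$, and $s$ is affine in $\la$) with $r(0)=(\rha+1)e^{-b}\neq 0$, so this quotient is analytic in a complex neighbourhood of $0$; finiteness at some $\la=-\la_0<0$ is exactly $\Ex(e^{\la_0\tau})<\infty$. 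For (f) I differentiate $\Ex(e^{-\la\tau})=r(0)/r(\la)$ at $\la=0$: the first derivative gives $\Ex(\tau)=r'(0)e^b/(\rha+1)$, and the second, combined with $\Var(\tau)=\Ex(\tau^2)-\Ex(\tau)^2$, gives the stated variance once the cross terms cancel.

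The delicate step is the first half of the core computation: justifying that the probabilistic transforms $u_\pm$ are genuine strong solutions carrying \emph{the correct} interface condition $u'(0-)=\beta\pp\,u'(0+)$ -- here the $\beta\mapsto\beta\pp$ shift coming from Proposition~\ref{pro:projections}(c) must be tracked carefully, and the passage from weak to strong solutions relies on the regularity theory of \cite{BSSW}. Once the broken boundary value problem is correctly set up, everything else is routine ODE algebra and differentiation of a quotient.
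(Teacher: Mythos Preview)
Your proof is correct and follows essentially the same route as the paper: reduce to the one-dimensional process $(Y_t)$, recognise the transforms $u_\pm$ as (weak, hence strong via \cite[Theorem~5.9]{BSSW}) solutions of the broken eigenvalue problem for $\Lap^{\R}_{\al,\beta\pp}$ on $[-1,1]$, solve the ODE explicitly, and read off (d)--(f) by evaluation and differentiation at $\la=0$. You actually supply more detail than the paper does (it declares the ODE computation a ``lengthy, but basic exercise'' and merely records the $\cosh/\sinh$ form of $r(\la)$ as a hint), and you correctly flag the $\beta\mapsto\beta\pp$ shift and the factorisation argument for independence.
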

 
\begin{proof}
(a) and (b) are clear. 

For (c), we fix $\la \ge 0$ and consider again the exit time $\sigma$ 
of the process
$(Y_t)$ from the interval $[-1\,,\,1]$. We let
$f_{\pm 1}(y) = \Ex_y(e^{-\la \sigma} \uno_{[Y_{\sigma}=\pm 1]})$, respectively,
defined for the 
starting point $Y_0= y \in [-1\,,\,1]$. We note that
$\Ex(e^{-\la \tau}\uno_{[Y=\pm 1]}) = f_{\pm 1}(0)$. Each of the two functions 
$f_{\pm 1}$  is a weak, whence 
strong \cite[Theorem 5.9]{BSSW} solution of the Dirichlet problem 
$\Lap^{\R}_{\al,\beta\pp}f_{\pm 1} = \la \cdot f_{\pm 1}$ on the interval 
$[-1\,,\,1]$
with boundary values $0$ and $1$, resp. $1$ and $0$ at the endpoints $-1$ and $1$.
Thus, $f_{-1}$ and $f_1$
satisfy the ``broken'' differential equation
$$
\begin{gathered}
\frac{1}{(\log \qq)^2} f_{\pm 1}'' +  \frac{\alpha-1}{\log \qq} f_{\pm1}' = 
\la \cdot f_{\pm 1}\,,\qquad
f_{\pm 1}'(0-) =\beta\pp \, f_{\pm 1}'(0+)\,,\\[4pt]
f_1(-1) = 0\,, f_1(1) = 1\,, \quad \text{resp.} \quad f_{-1}(-1) =1\,,
f_{-1}(1) =0.
\end{gathered}
$$
The computation of the solutions is a lengthy, but basic exercise that leads to
(c); it may be useful here to note that
$$
r(\la) = \begin{cases}
          (\beta\,\pp +1) \cosh \sqrt{s(\la)} \;+\; 
          (\beta\,\pp - 1)\,b\,\,\sinh\! \sqrt{s(\la)}\Big/\!\sqrt{s(\la)}\,,\; & s(\la) \ge 0\,,\\[6pt]
          (\beta\,\pp +1) \cos \sqrt{-s(\la)} \;+\; 
          (\beta\,\pp - 1)\,b\,\sin\! \sqrt{-s(\la)}\Big/\!\sqrt{-s(\la)}\,,\; & s(\la) \le 0\,.
         \end{cases}
$$ 

Statement (d) is obtained by setting $\la=0$ in (c).

A short computation 
now shows that 
$\Ex(e^{-\la \tau}\uno_{[Y=\pm 1]}) =\Ex(e^{-\la \tau})\Prob([Y=\pm 1]$
for all $\la \ge 0$, which yields independence of $Y$ and $\tau$.

Statement (e)  is obvious from the form of the Laplace transform.

Statement (f) is obtained by
direct computations of the first and second derivatives of the transform.
\end{proof}
We can compute 
\begin{equation}\label{eq:Etau}
\Ex(\tau) =  \begin{cases}
\dfrac{(\log \qq)^2}{2b^2}\,  
\dfrac{(\beta\pp\!-\!1) b \cosh b + [(\beta\pp\!+\!1)b - (\beta\pp\!-\!1)] \sinh b}
{(\beta\pp\!+\!1) \cosh b + (\beta\pp\!-\!1) \sinh b}\,,&\text{if}\; \alpha \ne 1\,,
\\[4pt]
\dfrac{(\log \qq)^2}{2}\,,&\text{if}\; \alpha = 1\,, 
\end{cases}
\end{equation}
However, we omit the lengthy formula for $\Var(\tau)$, wich can be obtained by 
tedious computation but provides no specific insight. 

The following is obtained by completely similar, but simpler computations. 
(Namely, we have to solve the same differential equation as above for computing 
$\Ex_y(e^{-\la \tau(1)})$, but it is not ``broken''.)

\begin{lem}\label{lem:expmom}
For any $y \in \R$, there is $\la = \la(y) > 0$ (depending only on the fractional
part of~$y$) such that for the process $(Y_t)$ starting at $y$
$$
\Ex_y\bigl(e^{\la \tau(1)}\bigr) < \infty\,.
$$
\end{lem}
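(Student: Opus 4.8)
The plan is to reduce everything to the exit time of $(Y_t)$ from a single open unit interval, where the generator $\Lap^{\R}_{\al,\beta\pp}$ is an honest constant-coefficient elliptic operator with no bifurcation (the ``not broken'' remark preceding the lemma). First I would invoke the integer-translation invariance of the transition density coming from \eqref{eq:groupinvariance}: the law of $\tau(1)$ under $\Prob_y$ depends only on the fractional part of $y$, so it suffices to treat $y\in[0,1)$. If $y=0$ (an integer, i.e. fractional part $0$), then $\tau(1)=\sigma$ and the process starts on a line, so the desired exponential moment is already furnished by Proposition~\ref{pro:tau}(e). Hence I would focus on $y\in(0,1)$, where $Y_0\notin\Z$ and, since $(Y_t)$ has continuous paths, $\tau(1)$ is exactly the first exit time of $(Y_t)$ from the open interval $(0,1)$, which lies inside a single open edge of the metric graph $\R=\T_1$.

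Next I would set up the candidate function $u_\la(y)=\Ex_y\bigl(e^{\la\tau(1)}\bigr)$ and derive, by the same weak-to-strong solution argument as in Proposition~\ref{pro:tau}(c) (via \cite[Theorem 5.9]{BSSW}, in its simplified one-edge form), the ordinary differential equation it should satisfy. The only change from that proposition is the sign of the spectral parameter: the moment with $e^{+\la\tau}$ corresponds to $\Lap^{\R}_{\al,\beta\pp}u_\la=-\la\,u_\la$, that is,
$$
\frac{1}{(\log\qq)^2}\,u_\la'' + \frac{1-\al}{\log\qq}\,u_\la' + \la\,u_\la = 0
\quad\text{on}\ (0,1),\qquad u_\la(0)=u_\la(1)=1.
$$
Crucially there is no interior matching condition, because $(0,1)$ meets no integer. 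This is a constant-coefficient linear ODE whose characteristic roots I would compute explicitly, producing a $\cosh/\sinh$ (or $\cos/\sin$) combination exactly as in the closed form for $r(\la)$ in Proposition~\ref{pro:tau}(c). At $\la=0$ the boundary value problem has the solution $u_0\equiv 1>0$, so by continuous dependence of the solution on $\la$ there is a threshold $\la_0>0$ such that for every $\la\in(0,\la_0)$ the solution $u_\la$ exists and stays strictly positive and bounded on $[0,1]$.

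Finally, for such $\la$ I would verify finiteness by a supermartingale plus Fatou argument. Since $\Lap^{\R}_{\al,\beta\pp}u_\la+\la u_\la=0$ in $(0,1)$, the process $M_t=e^{\la(t\wedge\tau(1))}u_\la\bigl(Y_{t\wedge\tau(1)}\bigr)$ is a nonnegative local martingale, hence a supermartingale, so $\Ex_y(M_t)\le M_0=u_\la(y)$. By Lemma~\ref{lem:tau-finite} we have $\tau(1)<\infty$ almost surely, and since $u_\la\equiv 1$ on the boundary $\{0,1\}$, the paths satisfy $M_t\to e^{\la\tau(1)}$ as $t\to\infty$; Fatou's lemma then yields $\Ex_y\bigl(e^{\la\tau(1)}\bigr)\le u_\la(y)<\infty$. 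As $\la_0$ is the same for all $y\in(0,1)$, a single $\la$ works for the whole interval, so $\la$ may be chosen depending only on the fractional part of $y$, as claimed. The only genuine obstacle is the second step: guaranteeing that the ODE solution remains positive and finite for some positive $\la$, i.e.\ that $\la$ stays below the bottom of the Dirichlet spectrum of $-\Lap^{\R}_{\al,\beta\pp}$ on $(0,1)$; this is exactly what the continuity argument from $\la=0$ secures, and it is the reason the exponential moment holds only up to a finite $\la_0$ rather than for all $\la$.
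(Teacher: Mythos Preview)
Your proposal is correct and follows essentially the paper's indicated route: reduce by integer translation to $y\in[0,1)$, handle $y=0$ via Proposition~\ref{pro:tau}(e), and for $y\in(0,1)$ solve the same second-order ODE as in that proposition but on the single edge $(0,1)$, where it is ``not broken''. The one minor difference is in the final step: the paper computes the Laplace transform $\Ex_y(e^{-\la\tau(1)})$ explicitly and then invokes its analyticity near $\la=0$ (exactly as in Proposition~\ref{pro:tau}(e)), whereas you go directly to the sign $+\la$, build the ODE solution $u_\la$ for small $\la>0$, and dominate the expectation by the supermartingale/Fatou argument; both are standard and equivalent ways to pass from the explicit ODE solution to the exponential moment.
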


We now clarify the nature of the induced processes on $\Z$ and on $\T$, 
respectively.

\begin{cor}\label{cor:RW} With  $\rha = \beta \pp \,\qq^{\alpha-1}$
as in Proposition \ref{pro:tau}(d),
\\[4pt]
\emph{(a)} the process 
$\bigl( Y_{\tau(n)} \bigr)_{n \ge 1}$ is a nearest neighbour
random walk on $\Z$ with transition probabilities
$$
p_{\Z}(k,l) = \Prob[Y_{\tau(n+1)} = l\mid Y_{\tau(n)} = k] = 
\begin{cases} \dfrac{\rha}{1+\rha}\,,&\text{if }\; l = k+1\,,\\[10pt]
\dfrac{1}{1+\rha}\,,&\text{if }\; l = k-1\,,\\[6pt]
0\,,&\text{otherwise.}
\end{cases}
$$
\emph{(b)} The process 
$\bigl( W_{\tau(n)} \bigr)_{n \ge 1}$ is a transient nearest neighbour
random walk on (the vertex set of) $\T$ with
transition probabilities
$$
p_{\T}(v,w) = \Prob[W_{\tau(n+1)} = w\mid W_{\tau(n)} =v] = 
\begin{cases} \dfrac{\rha}{(1+\rha)\pp}\,,&\text{if }\; w^-=v\,,\\[10pt]
\dfrac{1}{1+\rha}\,,&\text{if }\; w = v^-\,,\\[6pt]
0\,,&\text{otherwise,}
\end{cases}
$$
where $v,w \in V(\T)$.
\end{cor}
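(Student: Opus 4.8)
The plan is to obtain both statements by applying the strong Markov property of $(X_t)$ at the stopping times $\tau(n)$ and reading the one-step law off Proposition \ref{pro:tau}(d); the only genuinely combinatorial input is the factor $1/\pp$ in part~(b), which I would pin down by symmetry, and the only analytic point is transience.

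First I would fix the common clock. Since $Y_t = \pi^{\R}(X_t) = \hor(W_t)$, the instants $\tau(n)$ are simultaneously the successive visits of $(Y_t)$ to $\Z$ and of $(W_t)$ to $V(\T)$, as already observed after \eqref{eq:stop}; thus $Y_{\tau(n)} \in \Z$ and $W_{\tau(n)} \in V(\T)$ for $n \ge 1$. Between two consecutive such times the coordinate $Y_t$ stays strictly between two neighbouring integers and $W_t$ stays inside a single open edge, so $Y_{\tau(n+1)} - Y_{\tau(n)} = \pm 1$ and $W_{\tau(n+1)}$ is a neighbour of $W_{\tau(n)}$; moreover $W$ moves to a successor exactly when $Y$ increases and to the predecessor exactly when $Y$ decreases, because $\hor(W_t) = Y_t$. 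By the strong Markov property the increments are i.i.d.\ with the law of $Y$ in Proposition \ref{pro:tau}(d), and by the $\Af$-invariance \eqref{eq:groupinvariance} (horizontal translation together with transitivity on $V(\T)$) this law does not depend on the current state. This already yields part~(a): $+1$ with probability $\rha/(1+\rha)$ and $-1$ with probability $1/(1+\rha)$.

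For part~(b) the total up- and down-probabilities are therefore $\rha/(1+\rha)$ and $1/(1+\rha)$, and it remains to split the former equally among the $\pp$ successors of the current vertex $v$. Here I would use that any permutation of the successors of $v$ is realized by some $\ga \in \Aff(\T_{\pp})$ fixing $v$ and $\om$; such a $\ga$ has $\Phi(\ga)=0$, so $(\id,\ga) \in \Af$ and it fixes the starting point $(z,v) \in \Lf_v$. By the invariance \eqref{eq:inv-meas} of the exit distribution under $(\id,\ga)$, the walk enters each successor with the same probability, namely $\tfrac{1}{\pp} \cdot \tfrac{\rha}{1+\rha} = \tfrac{\rha}{(1+\rha)\pp}$, which is the claimed $p_{\T}(v,w)$ for $w^- = v$, while $p_{\T}(v,v^-) = 1/(1+\rha)$.

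The one step requiring more than bookkeeping is transience, and I expect the critical case to be the main obstacle. When $\rha \ne 1$ it is immediate: the height chain $Y_{\tau(n)}$ is a biased nearest-neighbour walk on $\Z$, so $|Y_{\tau(n)}| \to \infty$, and since $\dist_{\T}(o,W_{\tau(n)}) \ge |\hor(W_{\tau(n)})| = |Y_{\tau(n)}|$ by \eqref{eq:tree-conf}, the walk leaves every finite ball. When $\rha = 1$ the height chain is recurrent and transience must come from the branching: writing $D$ for the probability of ever reaching the predecessor and $U$ for that of ever reaching a fixed successor, the tree structure gives $D = \tfrac{1}{2} + \tfrac{1}{2} D^2$, hence $D = 1$, together with a companion quadratic $\pp U^2 - (\pp+1)U + 1 = 0$ forcing $U = 1/\pp < 1$; the return probability $\tfrac{1}{2} U + \tfrac{1}{2} D = (\pp+1)/(2\pp)$ is then $< 1$ for $\pp \ge 2$. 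Alternatively, since $(W_{\tau(n)})$ is invariant under $\Aff(\T_{\pp})$, transience follows directly from the study of such walks in \cite{CKW}.
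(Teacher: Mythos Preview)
Your proof is correct and follows essentially the same route as the paper's: part~(a) is read off Proposition~\ref{pro:tau}(d), part~(b) follows from~(a) by the symmetry that permutes the successors of $v$, and transience is referred to \cite{CKW}. The paper's proof is simply terser---it states the symmetry in one line without naming the group element $(\id,\ga)$, and for transience it only cites \cite{CKW} and \cite{Wlamp} without the explicit computation you give for $\rha=1$. One small point: in your quadratic $\pp U^2-(\pp+1)U+1=0$ you should say explicitly why the root $U=1/\pp$ rather than $U=1$ is selected (e.g.\ as the minimal nonnegative solution of the first-passage recursion), since otherwise the conclusion $U<1$ is not yet forced; your alternative appeal to \cite{CKW} of course covers this.
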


\begin{proof} (a) is immediate from Proposition \ref{pro:tau}(d).
\\[4pt]
(b) is an immediate consequence of (a), because
for any $v \in V(\T)$, we must have $p_{\T}(v,v^-) = p_{\Z}(k,k-1)$, while
$p_{\T}(v,w)$ must be the same for all successors $w$ of $v$, with sum
$p_{\Z}(k,k+1)$. It is well-known and easy to prove that this random walk on 
$V(\T)$ is transient (visits any finite set only finitely often a.s.), compare with \cite{CKW} or 
\cite{Wlamp}.
\end{proof}

The transition kernel of the induced processes on $\HT$, resp. $\Hb$, cannot 
be computed as explicitly. 
We need to consider the non-compact set
\begin{equation}\label{eq:Omv}
\Omega_v = \{ (z,w) \in \HT : w \in N(v)^o\} \subset \HT\,,
\end{equation}
where $N(v)$ is the ``neighbourhood star'' in $\T$ at $v \in V(\T)$.
That is, $N(v)$ is the union of all edges ($\equiv$ intervals !) of $\T$ which
have $v$ as one endpoint. It is a compact metric subtree of $\T$, whose boundary
$\partial N(v)$ consists of all neighbours of $v$ in $V(\T)$.
 We write
$\partial^+ N(v) = \partial N(v) \setminus \{ v^-\}$ (the forward neighbours
of $v$).

For any starting point $\wf \in \Omega_v\,$, the exit time $\tau^{\Omega_v}$ 
is almost surely finite by \eqref{eq:groupinvariance} and Lemma 
\ref{lem:tau-finite}.
Thus, we have the probability measure $\mu_{\wf}^{\Omega_v}\,$ on the boundary of
$\Omega_v$ in $\HT$, 
$$
\partial \Omega_v = \bigcup_{w \in \partial N(v)} \Lf_w\,.
$$
For $\wf \in \Lf_v\,$, this is the transition probability of the Markov
chain $(X_{\tau(n)})$ on $\LT\,$: for any $\wf \in \Lf_v$ ($v \in V(\T)$)  
and Borel set $B \subset \partial \Omega_v\,$,
\begin{equation}\label{eq:transprob}
\Prob[X_{\tau(n+1)} \in B \mid X_{\tau(n)} = \wf] =  
\mu_{\wf}^{\Omega_v}(B)\,.
\end{equation}

\begin{lem}\label{lem:measure}  For any $\wf \in \Omega_v\,$,
the measure $\mu_{\wf}^{\Omega_v}$ is supported by whole of the boundary of
$\Omega_v$ in $\HT$, 
$$
\partial \Omega_v = \Lf_{v^-} \cup \bigcup_{w \in V(\T): w^- = v} \Lf_w\,.
$$
In particular, the process $(X_{\tau(n)})$ is irreducible on $\LT\,$: 
for any starting point $\wf \in \LT$ and any non-empty open interval $I$ 
that lies on one of the bifurcation lines,
$$
\Prob_{\wf}[\exists n : X_{\tau(n)} \in I] > 0\,.
$$ 
\end{lem}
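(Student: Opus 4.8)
The plan is to establish the sharper claim that for every neighbour $w$ of $v$ in $V(\T)$ (that is, $w=v^-$ or $w^-=v$) and every non-empty relatively open interval $I\subset\Lf_w$ one has $\mu_{\wf}^{\Omega_v}(I)>0$; since $\partial\Omega_v=\Lf_{v^-}\cup\bigcup_{w:\,w^-=v}\Lf_w$, this is precisely the statement that $\mu_{\wf}^{\Omega_v}$ has full support. First I would fix such an interval $I$, lying on a line $\Lf_w$ that delimits a strip $\Sf_w\subset\Omega_v$, and factor the passage from $\wf$ to $I$ into finitely many legs of positive probability, so that the strong Markov property assembles them into $\mu_{\wf}^{\Omega_v}(I)>0$. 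The building block is an \emph{interior} positivity statement: in the open interior of any single strip, $\Lap_{\al,\beta}$ is a non-degenerate elliptic operator with smooth coefficients (hyperbolic Brownian motion with vertical drift), so the classical theory applies, and the harmonic measure of any non-empty relatively open subset of a delimiting horizontal line, viewed from an interior point, is strictly positive (positivity of the Poisson kernel, via the strong maximum principle and the Harnack inequality).

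With this block in hand, the legs are as follows. If $\wf\notin\Lf_v$, the first leg drives the path to the bifurcation line $\Lf_v$: whichever open strip contains $\wf$ has $\Lf_v$ as one of its two boundary lines, so a non-degenerate diffusion started in the interior reaches $\Lf_v$ before leaving that strip with positive probability. The second leg starts from a point of $\Lf_v$, enters the target strip $\Sf_w$, and reaches $I\subset\Lf_w$ before returning to $\Lf_v$; once the path is at an interior point of $\Sf_w$, the interior positivity block yields exit through the prescribed interval $I$ (on the far line of $\Sf_w$) before hitting the near line, with positive probability, and such a path exits $\Omega_v$ exactly through $I$. Concatenating the legs by the strong Markov property gives $\mu_{\wf}^{\Omega_v}(I)>0$, and letting $I$ range over all lines $\Lf_w$ with $w$ a neighbour of $v$ yields full support.

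The hard part will be the two legs that touch the bifurcation line $\Lf_v$, where the process is not a classical diffusion and the naive support theorem does not apply: one must justify that the path genuinely reaches and crosses $\Lf_v$, and that from $\Lf_v$ it enters \emph{each} successor strip, both with positive probability. Unlike a smooth controllability argument, this is controlled by the strict positivity and continuity of the heat kernel $\hb_{\al,\beta}$ of Proposition~\ref{pro:sg} together with the bifurcation condition \eqref{eq:bif} with $\beta>0$, which forces mass into every successor branch; this is exactly the point where the analysis of \cite{BSSW} substitutes for the classical smooth setting. The quantitative split is already recorded in Corollary~\ref{cor:RW}: starting on $\Lf_v$, the one-step transitions of the induced tree walk are $p_{\T}(v,v^-)=\frac{1}{1+\rha}>0$ and $p_{\T}(v,w)=\frac{\rha}{(1+\rha)\pp}>0$ for each successor (here $\rha=\beta\pp\,\qq^{1-\al}>0$), and by the invariance \eqref{eq:inv-meas} the group $\Af$ permutes the successors transitively, so that each successor strip is certifiably charged. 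I would use these to guarantee that both the downward and each upward crossing carry positive mass.

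Finally, the irreducibility of $(X_{\tau(n)})$ follows from the full-support statement combined with the irreducibility of the projected nearest-neighbour walk $(W_{\tau(n)})$ on $V(\T)$. Given a starting point $\wf\in\Lf_v$ and a non-empty open interval $I$ on a bifurcation line $\Lf_u$, pick any neighbour $v'$ of $u$ in $V(\T)$. Since $p_{\T}(v,v^-)>0$ and $p_{\T}(v,w)>0$ for successors, the walk on $V(\T)$ reaches $v'$ from $v$ along a finite tree path with positive probability; at the subsequent step the process exits $\Omega_{v'}$, and because $u\in\partial N(v')$ the line $\Lf_u$ is part of $\partial\Omega_{v'}$, so by full support the one-step exit measure charges $I$ with positive probability. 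The strong Markov property then gives $\Prob_{\wf}[\exists n:\,X_{\tau(n)}\in I]>0$, as required.
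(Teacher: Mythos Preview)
Your argument is correct, but the paper's proof takes a shorter, more abstract route. Rather than factoring paths through the bifurcation line and invoking Corollary~\ref{cor:RW} for the crossing, the paper observes that $\tau^{\Omega_v}$ coincides with the exit time of the projected tree process $(W_t)$ from the open star $N(v)^o$; since the Dirichlet problem on that finite metric tree is elementarily solvable, every neighbour of $v$ is a regular boundary point for $(W_t)$, and hence every point of $\partial\Omega_v$ is regular for $(X_t)$. It then invokes the general potential-theoretic fact that every regular boundary point lies in the support of the exit measure, which gives full support in one stroke. The advantage of the paper's approach is that regularity is detected entirely on the one-dimensional tree projection, so the singularity of $\Lf_v$ never has to be confronted directly; your approach, by contrast, is more hands-on and makes the mechanism visible, at the cost of needing the explicit branching probabilities and a careful strong-Markov assembly (for instance, to justify the second leg you should stop at the first time $W_t$ reaches the midpoint of the edge $[v,w]$, which is a stopping time occurring before $\tau(1)$ on the event $\{W_{\tau(1)}=w\}$, and then apply interior ellipticity in $\Sf_w^o$). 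Both arguments are valid; the paper's is slicker, yours is more constructive.
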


\begin{proof} The second statement follows from the first one.
The first one follows from ellipticity of $\Lap_{\alpha,\beta}\,$.
More specifically, we can also see this as follows. A boundary point
$\zf$ of any open domain $\Omega \subset \HT$ is regular for the Dirichlet
problem with respect to $\partial \Omega$ if and only if 
$\Prob_{\zf}[\tau^{\Omega} = 0] =1$ (a general fact from Potential
Theory). 

Every boundary point of $\Omega_v$ \emph{is} regular.
This follows from the fact that $\tau^{\Omega_v}$ is the same as the exit time
of the process $(W_t)$ on $\T$ from the neghbourhood star $N(v)^o$.
But the Dirichlet problem for the latter (with boundary values at the finitely
many neighbours of $v$ in $V(\T)$) is obviously solvable, as one can verify by
direct, elementary computations similar to those used in the proof of 
Lemma \ref{lem:tau-finite}.

To conclude, recall that every regular point has to be in the support of 
the first exit measure. 
\end{proof}

We choose the point 
$\of = (\im,o) \in \HT$ as the \emph{origin} of treebolic space. Let 
$$
\mu = \mu_{\of}^{\Omega}\,,\quad \text{where} \quad \Omega = \Omega_o\,.
$$
By group invariance \eqref{eq:inv-meas}, we have
\begin{equation}\label{eq:degmu}
\mu_{\wf}^{\Omega_v} = \de_{\gf} * \mu\,, \quad \text{when} \quad
\gf \in \Af\,, \;\;\gf\of = \wf \in \Lf_v\,.
\end{equation}
The convolution of the Dirac measure at $\gf$ with $\mu$ is defined via the 
action of the group $\Af$, which is transitive on $\LT$. That is, $\LT$ is a 
homogeneous space of $\Af$ (the stabilizer of $\of$ in $\Af$ is a non-trivial
compact subgroup), and $(X_{\tau(n)})$ is a random walk on that
homogeneous space.

\smallskip

The transition kernel of  $(Z_{\tau(n)})$ can be obtained analogously.
That process evolves  on 
$$
\LH = \bigcup_{k\in \Z} \Lf_k \subset \Hb.
$$
We let
$$ 
\wt \Omega_k = \bigl(\Sf_{k-1} \cup \Sf_k\bigr)^o = \pi^{\Hb} (\Omega_v) 
\quad \text{for any}\; v \in H_k \subset V(\T)\,.
$$
Its boundary within $\Hb$ is $\partial\wt \Omega_k= \Lf_{k-1} \cup \Lf_{k+1}\,$.
For any starting point $z \in \wt \Omega_k$, we let
$\wt\mu_z^{\wt \Omega_k}$ be the exit distribution from  $\wt\Omega_k\,.$
In analogy with Lemma \ref{lem:measure}, it is supported by the whole
of $\partial\wt \Omega_k\,$, and
for any $z \in \Lf_k$   
and Borel set $B \subset \partial \wt \Omega_k\,$,
\begin{equation}\label{eq:transprob-H}
\Prob[Z_{\tau(n+1)} \in B \mid Z_{\tau(n)} = z] =  
\wt\mu_z^{\wt \Omega_k}(B)\,.
\end{equation}
We set
\begin{equation}\label{eq:wtmu}
\wt\mu = \wt\mu_{\,\im}^{\wt\Omega}\,,\quad{where} \quad \wt \Omega = \wt\Omega_0\,. 
\end{equation}
This is the image of $\mu$ under the projection $\pi^{\Hb}$.
Once more by group invariance \eqref{eq:inv-meas}, we have
$$
\wt\mu_{z}^{\wt\Omega_k} = \de_{g} * \wt\mu\,, \quad \text{when} \quad
g \in \Aff(\Hb,\qq)\,, \;\;g\im = z \in \Lf_k\,.
$$
Now we note that the group $\Aff(\Hb,\qq)$ acts \emph{simply transitively}
on $\LH$. Indeed, $\LH$  can be identified with $\Aff(\Hb,\qq)$
via the homeomorphic one-to-one correspondence
\begin{equation}\label{eq:identif} 
z= x +\im \qq^k \leftrightarrow g = \Bigl(\!\begin{smallmatrix}\qq^k &x\\[2pt] 0 & 1\end{smallmatrix}\!\Bigr)\,,\AND g\im = z\,.
\end{equation}
Thus, group invariance tells us that we can consider the process
$(Z_{\tau(n)})$ as the right random walk on $\Aff(\Hb,\qq)$ with law $\wt\mu$.
In other words, the increments  $Z_{\tau(n-1)}^{-1}Z_{\tau(n)}$, $n \ge 2$
(resp. $n \ge 1$, when $Z_0 \in \HT$) are i.i.d. random variables with 
distribution $\wt\mu$, when we consider inverses in $\Aff(\Hb,\qq)$
via the identification \eqref{eq:identif}.  
\begin{cor}\label{cor:transience-Ztaun}
The random walk $(Z_{\tau(n)})$ on $\Aff(\Hb,\qq)$ is transient.
\end{cor}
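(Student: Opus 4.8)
The plan is to use the identification, just established, of $(Z_{\tau(n)})$ with the right random walk on $\Aff(\Hb,\qq)$ of law $\wt\mu$, together with the homeomorphism \eqref{eq:identif} between $\LH$ and $\Aff(\Hb,\qq)$. Under that homeomorphism, transience means precisely that $(Z_{\tau(n)})$ visits every compact subset of $\LH$ (equivalently, of $\Hb$) only finitely often almost surely. Writing $Z_{\tau(n)} = \bigl(\begin{smallmatrix} \qq^{Y_{\tau(n)}} & \RE Z_{\tau(n)} \\ 0 & 1\end{smallmatrix}\bigr)$, the exponent $Y_{\tau(n)}$ is exactly the nearest-neighbour walk on $\Z$ of Corollary \ref{cor:RW}(a), with upward probability $\rha/(1+\rha)$ and mean increment $\Ex(Y) = (\rha-1)/(\rha+1)$ by Proposition \ref{pro:tau}(d). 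I would split the argument according to whether this mean vanishes.

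When $\rha \neq 1$ the increment has nonzero mean, so the strong law of large numbers gives $|Y_{\tau(n)}| \to \infty$ almost surely. Hence $\IM Z_{\tau(n)} = \qq^{Y_{\tau(n)}}$ tends to $\infty$ if $\rha>1$ and to $0$ if $\rha<1$; in either case $Z_{\tau(n)}$ eventually leaves every compact subset of $\Hb$, escaping toward $\binfty$ respectively toward the lower boundary line, and transience is immediate.

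The genuinely harder case is the critical one $\rha = 1$, where $(Y_{\tau(n)})$ is recurrent simple random walk and returns to $0$ infinitely often, so that any escape to infinity must come through the real part. Here I would argue by regeneration. Let $n_1 < n_2 < \dots$ enumerate the almost surely infinitely many indices with $Y_{\tau(n_k)} = 0$; these are stopping times for the increments of the walk. At those instants $Z_{\tau(n_k)} = \bigl(\begin{smallmatrix} 1 & x_{n_k} \\ 0 & 1\end{smallmatrix}\bigr)$ lies in the subgroup of translations $\bigl(\begin{smallmatrix} 1 & b \\ 0 & 1\end{smallmatrix}\bigr)$, $b \in \R$, and by the strong Markov property and homogeneity the successive increments $Z_{\tau(n_{k-1})}^{-1} Z_{\tau(n_k)}$ are i.i.d. Since the scaling factor at height $0$ equals $\qq^0 = 1$, the real parts satisfy $x_{n_k} = x_{n_{k-1}} + C_k$ with $C_k$ i.i.d.; the reflection $\mathfrak s$ of Theorem \ref{thm:isogroup} shows that $\wt\mu$, and hence the law of $C_k$, is symmetric, so $(x_{n_k})$ is a centred random walk on $\R$.

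The crux is that this walk is transient because $C_k$ is \emph{heavy}-tailed: during an excursion of the height walk the one-step real displacements are multiplied by $\qq^{(\text{current height})}$, and an excursion reaches level $H$ with probability of order $1/H$ while producing a displacement of order $\qq^H$, which forces a tail $\Prob[\,|C_k| > x\,] \asymp 1/\log x$, far heavier than any stable index. A centred real random walk with such tails is transient, so $x_{n_k}$ returns to no bounded interval infinitely often, and neither does $Z_{\tau(n)}$. Turning this heuristic into a rigorous tail estimate is the main obstacle; it is exactly the critical regime for affine random walks treated by Grincevicius~\cite{Gr1},~\cite{Gr2}, whose results on the tail of the perpetuity $\RE Z_{\tau(n)}$ I would invoke to conclude. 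A softer alternative for this case is to observe that $\Aff(\Hb,\qq)\cong\Z\ltimes\R$ has exponential growth while $\wt\mu$ is adapted and spread out (its exit distribution has a density), so the classical fact that a recurrent adapted walk forces polynomial growth of degree at most $2$ already excludes recurrence.
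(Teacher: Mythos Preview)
Your argument is essentially correct, but it is far more laborious than necessary, and the critical case $\rha=1$ remains only heuristic in your main line (as you yourself acknowledge). The paper disposes of the corollary in two lines, uniformly in $\rha$: the support of $\wt\mu$ generates $\Aff(\Hb,\qq)$ as a semigroup, so the walk is irreducible; by \eqref{eq:modularAffRq} the group is non-unimodular; and any irreducible random walk on a non-unimodular locally compact group is transient (Guivarc'h--Keane--Roynette~\cite{GKR}, or~\cite{Wrec}). No case split, no tail estimates, no regeneration.

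Your ``softer alternative'' at the very end is in fact the same idea in different clothing: exponential growth of $\Z\ltimes\R$ and non-unimodularity are two faces of the same obstruction to recurrence, and either one suffices once you know the walk is adapted. Had you led with that observation, the entire discussion of the drift case and the Grincevicius-type excursion analysis would have been unnecessary. The regeneration/heavy-tail route you sketch for $\rha=1$ is interesting and can be made rigorous, but it is the hard way around; the $1/\log x$ tail heuristic would need a genuine proof (control of the real displacement accumulated over a simple-random-walk excursion of height $H$ is not automatic, since cancellations can occur), and invoking \cite{Gr1}, \cite{Gr2} at that point is really importing a much stronger result than transience requires.
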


\begin{proof} The support of the probability 
measure $\wt\mu$ on $\Aff(\Hb,\qq)$ generates that group as a semigroup, 
that is, the random walk is irreducible
(every open set is reached with positive probability). We know from
\eqref{eq:modularAffRq} that the group $\Aff(\Hb,\qq)$ is non-unimodular.
Now, any irreducible random walk on a non-unimodular group must be transient,
see \cite{GKR}, or, for a shorter proof, \cite {Wrec}. 
\end{proof}

The remainder of this section is dedicated to a study of properties of
the probability measures $\mu$ on 
$\partial \Omega = \Lf_{o^-} \cup\bigcup_{v:v^- = o } \Lf_v \subset \LT$
and $\wt \mu$ on $\partial \wt \Omega = \Lf_{-1} \cup \Lf_1 \subset \LH$,
respectively. 

An important step is to show that in between two successive times $\tau(n)$ and
$\tau(n+1)$, the processes $(Z_t)$ on $\Hb$ and thus also $(X_t)$ on $\HT$ cannot escape
too far ``sideways'' within the the current strip (i.e., the strip to which
the process is confined between those two times).  

\begin{pro} \label{pro:sideways} Suppose $(Z_t)$ starts in  
$\wt \Omega$.
There is $\rho < 1$ such that for every $n \in \N$, we have for the exit time
$\sigma$ from $\wt\Omega$ 
$$
\Prob_{z_0} \Bigl[ \max \bigl\{ |\RE Z_t-\RE z_0| : 0 \le t \le \sigma \bigr\} \ge n \Bigr] 
\le 2\rho^n\,\quad\text{for every }\; z_0 \in \wt \Omega.
$$
(Recall again that $\sigma = \tau(1)$ when $z_0 \in \Lf_0$.)
\end{pro}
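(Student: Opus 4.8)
The plan is to exploit that the horizontal coordinate $\RE Z_t$ is a continuous local martingale: the interior part of $\Lap^{\Hb}_{\al,\beta\pp}$ annihilates $z\mapsto\RE z$, and the vertical ``kicks'' of $(Z_t)$ on the interior line $\Lf_0$ leave $\RE Z_t$ continuous. First I would reduce the two-sided bound to a one-sided one. The reflection $z\mapsto-\bar z$ is an isometry of $\Hb$ preserving the strips and commuting with $\Lap^{\Hb}_{\al,\beta\pp}$, so $(Z_t)$ is invariant in law under $\RE z\mapsto-\RE z$; consequently the probabilities of escaping to the right and to the left by $n$ coincide, and a union bound produces the factor $2$. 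Thus it suffices to prove, for a suitable $\rho<1$,
$$
\Prob_{z_0}\Bigl[\sup_{0\le t\le\sigma}\bigl(\RE Z_t-\RE z_0\bigr)\ge n\Bigr]\le\rho^n\,.
$$

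Next comes a renewal argument. Put $\theta_m=\inf\{t:\RE Z_t=\RE z_0+m\}$, so the event above is $\{\theta_n<\sigma\}$, and set
$$
\rho=\sup_{z\in\wt\Omega}\Prob_z\bigl[\,\RE Z\ \text{reaches}\ \RE z+1\ \text{before}\ (Z_t)\ \text{exits}\ \wt\Omega\ \text{vertically}\,\bigr]\,.
$$
By invariance under horizontal translations (which lie in $\Af$ and preserve $\wt\Omega$) this supremum depends only on $\IM z$. On $\{\theta_{m-1}<\sigma\}$ the point $Z_{\theta_{m-1}}$ lies in $\wt\Omega$ with $\RE=\RE z_0+(m-1)$ and $\IM\in(\qq^{-1},\qq)$; the strong Markov property at $\theta_{m-1}$ together with the definition of $\rho$ gives $\Prob[\theta_m<\sigma]\le\rho\,\Prob[\theta_{m-1}<\sigma]$, hence $\Prob_{z_0}[\theta_n<\sigma]\le\rho^n$ by induction.

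The crux is to show $\rho<1$. For this I would use the Dambis--Dubins--Schwarz representation $\RE Z_t-\RE z_0=B_{\langle\RE Z\rangle_t}$ with $B$ a standard Brownian motion, together with the pathwise bound $\langle\RE Z\rangle_\sigma=\int_0^\sigma 2(\IM Z_s)^2\,ds\le 2\qq^2\sigma$, valid because $\IM Z_s\le\qq$ on $[0,\sigma]$. Writing $\psi(y)$ for the one-step escape probability from height $y$, so that $\sup_{[0,\sigma]}(\RE Z_t-\RE z_0)=\sup_{0\le s\le A}B_s$ with $A=\langle\RE Z\rangle_\sigma\le 2\qq^2\sigma$, and using the pathwise inclusion $\{\sup_{[0,A]}B\ge1,\ A\le a\}\subseteq\{\sup_{[0,a]}B\ge1\}$, we get for every $a>0$
$$
\psi(y)\le\Prob\Bigl[\sup_{0\le s\le a}B_s\ge1\Bigr]+\Prob_y\bigl[\sigma>a/(2\qq^2)\bigr]\,.
$$
The first term equals $2\,\Prob\bigl[N(0,1)\ge a^{-1/2}\bigr]$, a fixed number increasing to $1$ only at the slow rate $1-c\,a^{-1/2}$ as $a\to\infty$. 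The second term is controlled uniformly in the starting height by a \emph{uniform exponential moment} $\sup_{|y|<1}\Ex_y\bigl(e^{\la_0\sigma}\bigr)<\infty$ for some $\la_0>0$, obtained exactly as in Lemma~\ref{lem:tau-finite} and Proposition~\ref{pro:tau}(e) by solving the broken equation $\Lap^{\R}_{\al,\beta\pp}u=-\la u$ on $(-1,1)$; it therefore decays exponentially in $a$. Choosing a single large $a$ makes the right-hand side $<1$ uniformly in $y$, whence $\rho<1$.

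The main obstacle is precisely this last bound. Everything reduces to a uniform competition, over all starting heights in the open strip, between spreading sideways---whose clock is bounded by a constant multiple of $\sigma$---and exiting vertically. The uniform (in the starting point) exponential integrability of $\sigma$ is what tilts this competition, with a margin independent of the starting height, in favour of the vertical exit once the horizontal budget is a single unit; the bounded height of $\wt\Omega$ is what keeps the horizontal diffusion rate $2(\IM Z_s)^2$ bounded and makes the Brownian comparison effective.
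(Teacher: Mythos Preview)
Your argument is correct and follows the same renewal/strong Markov skeleton as the paper: reduce to a one-sided estimate by the reflection $z\mapsto-\bar z$, then peel off the crossings of the successive vertical segments $J_m=\{\RE z=\RE z_0+m\}$ one at a time.  The genuine difference lies in how you establish $\rho<1$.

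The paper argues \emph{softly}.  The function $z\mapsto\Prob_z[Z_{\sigma(1)}\in\Lf_{-1}\cup\Lf_1]$ is weakly harmonic on the half-strip $\wt\Omega^{(1)}=\{z\in\wt\Omega:\RE z<1\}$, hence (by the regularity theorem of \cite{BSSW}) strongly harmonic and continuous up to the closure of $J_0$.  It equals $1$ at the two endpoints of $\bar J_0$ and is $<1$ on the open segment, so by compactness its minimum over $\bar J_0$ is attained at an interior point and is strictly positive; one then sets $\rho$ equal to $1$ minus that minimum.  No stochastic calculus is needed---everything is reduced to the analytical machinery for strip complexes that the paper has already set up.

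Your route is \emph{quantitative and probabilistic}: recognise $\RE Z_t-\RE z_0$ as a continuous local martingale (indeed $x\mapsto\RE z$ is annihilated by $\Lap^{\Hb}_{\al,\beta\pp}$ and trivially satisfies the bifurcation condition, since $\partial_y x\equiv0$), invoke Dambis--Dubins--Schwarz, bound the quadratic variation by $2\qq^2\sigma$, and pit the reflection-principle estimate for Brownian motion against a uniform exponential moment of~$\sigma$.  This gives, in principle, an explicit bound on $\rho$ in terms of the spectral gap of the broken ODE on $(-1,1)$, which the paper's compactness argument does not.  The price is that you must justify the semimartingale decomposition and the quadratic-variation formula $\langle\RE Z\rangle_t=\int_0^t 2(\IM Z_s)^2\,ds$ for a diffusion that is not a classical It\^o process but comes from the Dirichlet form of \cite{BSSW}; this is true (the carr\'e du champ gives exactly $2y^2(\partial_x f)^2$ for $f=\RE z$, and the set $\Lf_0$ has zero occupation time), but it is the one place where you lean on facts not explicitly developed in the paper.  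The uniform bound $\sup_{|y|<1}\Ex_y(e^{\la_0\sigma})<\infty$ that you need is indeed available: the same broken ODE as in Proposition~\ref{pro:tau}(c), solved at a small negative $\la$, yields a continuous solution on $[-1,1]$, hence a uniform bound.
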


\begin{proof}
By invariance under horizontal translations, we may assume that $\RE z_0 =0$.

Consider the vertical segments, resp. open sets 
$$
J_n = \{ n + \im y : \qq^{-1} < y < \qq \} \subset \wt \Omega
\AND
\wt \Omega^{(n)} = \{ z \in \wt \Omega : \RE z < n \} \,,\quad n \in \Z\,,
$$
so that $J_n$ is the right hand side boundary of $\wt \Omega^{(n)}$.
For any starting point in $(\Sf_0 \cup \Sf_1)^o$, the exit time of $(Z_t)$ 
from $\wt \Omega$ is the $\sigma$ from \eqref{eq:sig}, and when 
$Z_0 \in \Lf_0$ then $\sigma =\tau(1)$. 
Analogously, we let $\sigma(n)$ be the exit time
of $(Z_t)$ from $\wt \Omega^{(n)}$. 

Now our argument will be as follows:
if $(Z_t)$ starts at $z_0$ and there is some $t \le \tau(1)$ such that 
$\RE Z_t \ge n$   then $(Z_t)_{t < \sigma}$ must pass through
each $J_k\,$, $k= 1, \dots, n$.  
$$
\beginpicture 

\setcoordinatesystem units <1.12mm,0.8mm>  

\setplotarea x from -40 to 40, y from 8 to 46

\arrow <6pt> [.2,.67] from 0 0 to 0 44

\plot -40 8  40 8 /
\plot -40 32  40 32 /

\plot -30 8  -30 32 /
\plot -20 8  -20 32 /
\plot -10 8  -10 32 /
\plot 30 8  30 32 /
\plot 20 8  20 32 /
\plot 10 8  10 32 /

\put {$\im$} [rb] at -0.3 16.6
\put {$\scs \bullet$} at 0 16
\put {$\Lf_{-1}$} [r] at -43 8
\put {$\Lf_0$} [r] at -43 16
\put {$\Lf_1$} [r] at -42 32

\put {$\Sf_0$} at 40 12
\put {$\Sf_1$} at 40 24

\put {$J_{-3}$} [b] at -30 33
\put {$J_{-2}$} [b] at -20 33
\put {$J_{-1}$} [b] at -10 33
\put {$J_3$} [b] at 30 33
\put {$J_2$} [b] at 20 33
\put {$J_1$} [b] at 10 33

\setdots <3pt>\plot -40 16  40 16 /

\endpicture
$$
\vspace{-.2cm}

\begin{center}
{\sl Figure~6.} The set $\wt \Omega \subset \Hb$, subdivided by
the bifurcation line $\Lf_0$\\ and the vertical segments $J_k\,$.
\end{center}

\smallskip

The function $z \mapsto \Prob_z[Z_{\sigma(1)} \in \Lf_{-1}\cup \Lf_1 ]$
is weakly harmonic (harmonic in the sense of distributions) on $\wt \Omega^{(1)}$, 
whence strongly harmonic by 
\cite[Theorem 5.9]{BSSW}, and thus continuous. We consider this function
on $\bar J_0$. At the endpoint of that segment, it is $=1$, while inside
$J_0$ it is $<1$. Thus, there is $z_0 \in J_0$ where our function attains its
minimum, and
$$
\rho =  1 - \Prob_{z_0}[Z_{\sigma(1)} \in \Lf_{-1}\cup \Lf_1] < 1\,.
$$
But then
$$
\Prob_z[Z_{\sigma(1)} \in J_1] 
= 1 - \Prob_z[Z_{\sigma(1)} \in \Lf_{-1}\cup \Lf_1] \le \rho \quad
\text{for every} \; z\in J_0\,.
$$
By invariance under the group $\Aff(\Hb,\qq)$, and in particular under translations
by reals, we also have for all $k \ge 1$
$$
\Prob_z[Z_{\sigma(k)} \in J_k] 
\le \rho \quad
\text{for every} \; z\in J_{k-1}\,.
$$
We now use ``balayage'' in probabilistic terms. Just for the next lines,
consider the measure $\wt \nu(B) = \Prob_{\im}[Z_{\sigma(k-1)} \in B]$ for Borel sets 
$B  \subset J_{k-1}\,$.   
If $Z_0 =\im$ and
$Z_{\sigma(k)} \in J_k$ then we must have $Z_{\sigma(k-1)} \in J_{k-1}\,$.
Therefore (by the strong Markov property)
$$
\begin{aligned}
\Prob_{z_0}[Z_{\sigma(k)} \in J_k] 
&=  \Prob_{z_0}[Z_{\sigma(k)} \in J_k\,,\; Z_{\sigma(k-1)} \in J_{k-1}]
= \int_{J_{k-1}}\!\! \Prob_z[Z_{\sigma(k)} \in J_k] \, d\wt\nu(z)\\
&\le \rho \cdot \wt\nu(J_{k-1}) 
= \rho \cdot
\Prob_{z_0}[Z_{\sigma(k-1)} \in J_{k-1}]\,.
\end{aligned}
$$
Inductively,
$$
\Prob_{z_0}[Z_{\sigma(k)} \in J_k] \le \rho^k \quad \text{for every}\; k \ge 1\,.
$$
If $Z_0=z_0$ and $\RE Z_t \ge n$ for some $t \le \sigma$  then a visit to
$J_n$ must have occured before time $t$. That is, $\sigma(n) \le t$, whence
\begin{equation}\label{eq:occur}
\Prob_{z_0}\bigl[ \max \{ \RE Z_t : 0 \le t \le \sigma \} \ge n \bigr] 
\le \rho^n\,. 
\end{equation}
Now observe that our process is also invariant under the reflection
$x + \im y \mapsto -x + \im y$. Therefore
$$
\Prob_{\im}\bigl[ \min \{ \RE Z_t : 0 \le t \le \sigma \} \le -n \bigr] 
\le \rho^n\,. 
$$
The proposed inequality follows.
\end{proof}

Relying again on group invariance \eqref{eq:groupinvariance}, 
we deduce the following.

\begin{cor}\label{cor:inter} The random variables
$$
\begin{aligned}
D_n &= \max \bigl\{ \dist_{\HT}(X_t, X_{\tau(n)}) : \tau(n) \le t \le \tau(n+1)\}\\
& = \max \bigl\{ \dist_{\Hb}(Z_t, Z_{\tau(n)}) : \tau(n) \le t \le \tau(n+1)\}
\,,
\quad n \in \N\,,
\end{aligned}
$$ 
are i.i.d. and 
$$
\limsup_{n \to \infty} \frac{D_n}{\log\log n} \le 2 \quad \text{almost surely.}  
$$
In particular,
$$
\Ex\Bigl(\exp \bigl(\exp(D_n/3)\bigr)\Bigr) < \infty\,.
$$
\end{cor}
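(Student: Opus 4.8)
The plan is to reduce everything to the ``sideways'' estimate of Proposition \ref{pro:sideways}. First I would verify the second equality in the definition of $D_n$. Between $\tau(n)$ and $\tau(n+1)$ the height $Y_t$ stays in the open interval $\bigl(\hor(v)-1,\hor(v)+1\bigr)$, where $v$ is the vertex with $X_{\tau(n)}\in\Lf_v$, so the tree component $W_t$ remains in the open star $N(v)^o$. For any such $W_t$ the confluent $v\cf W_t$ equals $W_t$ (when $W_t$ lies on $[v^-,v]$) or $v$ (when $W_t$ lies on some $[v,w]$ with $w^-=v$); in either case it belongs to $\{v,W_t\}=\{W_{\tau(n)},W_t\}$, so the first case of \eqref{eq:metric} gives $\dist_{\HT}(X_t,X_{\tau(n)})=\dist_{\Hb}(Z_t,Z_{\tau(n)})$ throughout the excursion. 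The i.i.d.\ property is the excursion-theoretic counterpart of Proposition \ref{pro:tau}(a),(b): $D_n$ is a functional of the excursion of $(X_t)$ on $[\tau(n),\tau(n+1)]$, and by the strong Markov property together with the invariance \eqref{eq:groupinvariance} and the transitivity of $\Af$ on $\LT$, the law of this functional is independent of $X_{\tau(n)}$ and the same for every $n$; hence the $D_n$ are i.i.d.

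It therefore suffices to estimate the common distribution. Writing $M=\max\{|\RE Z_t-\RE Z_{\tau(n)}|:\tau(n)\le t\le\tau(n+1)\}$ and normalising by the $\Af$-action so that $\IM Z_{\tau(n)}=1$, all imaginary parts along the excursion lie in $[\qq^{-1},\qq]$, and the projected excursion is, up to this group action, an excursion in $\wt\Omega$ started on $\Lf_0$ and stopped at $\sigma=\tau(1)$. The hyperbolic distance formula $\cosh\dist_{\Hb}(z,z')=1+\tfrac{|z-z'|^2}{2\,\IM z\,\IM z'}$ then yields a constant $A=A(\qq)$ with $\cosh D_n\le A\,(M^2+1)$, so that for large $m$ the event $\{D_n\ge m\}$ forces $M\ge c\,e^{m/2}$ for some $c>0$. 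By Proposition \ref{pro:sideways}, $\Prob[M\ge N]\le 2\rho^N$ with $\rho<1$, whence
$$
\Prob[D_n\ge m]\le 2\,\rho^{\,c\,e^{m/2}}=2\exp\bigl(-\kappa\,e^{m/2}\bigr),\qquad \kappa=-c\log\rho>0,
$$
for all $m$ large enough. This doubly-exponential tail is the crux of the corollary.

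The two stated conclusions now follow by routine bookkeeping. For the almost sure upper bound, fix $\ep>0$ and put $m=(2+\ep)\log\log n$; then $e^{m/2}=(\log n)^{1+\ep/2}$, so $\Prob[D_n\ge(2+\ep)\log\log n]\le 2\exp\bigl(-\kappa(\log n)^{1+\ep/2}\bigr)$, which is eventually $\le n^{-2}$ and hence summable. Borel--Cantelli gives $D_n\le(2+\ep)\log\log n$ for all large $n$ almost surely, and letting $\ep\downarrow 0$ proves $\limsup_n D_n/\log\log n\le 2$. For the moment bound, write $g(m)=\exp(e^{m/3})$ and use $\Ex\,g(D_n)=g(0)+\int_0^\infty g'(m)\Prob[D_n>m]\,dm$; since $g'(m)=\tfrac13 e^{m/3}\exp(e^{m/3})$ and $e^{m/2}$ dominates $e^{m/3}$, the integrand is bounded for large $m$ by $\tfrac23 e^{m/3}\exp\bigl(e^{m/3}-\kappa e^{m/2}\bigr)\le\tfrac23 e^{m/3}\exp(-e^{m/3})$, which is integrable. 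Hence $\Ex\bigl(\exp(\exp(D_n/3))\bigr)<\infty$. The constant $2$ in the first conclusion and the fact that any exponent strictly below $1/2$ (here $1/3$) works in the second both stem from the single feature that the hyperbolic distance grows like $2\log$ of the horizontal displacement, i.e.\ from the $e^{m/2}$ in the tail bound.

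Since Proposition \ref{pro:sideways} is already available, the main obstacle is not probabilistic but geometric: one must check, via the hyperbolic metric in a band of bounded imaginary parts, that $\dist_{\Hb}$ is at most $2\log M+O(1)$, so that the exponential tail of the horizontal displacement $M$ is correctly upgraded to the doubly-exponential tail of $D_n$. Everything after that is an application of Borel--Cantelli and an elementary integral.
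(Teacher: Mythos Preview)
Your proof is correct and follows essentially the same route as the paper: both reduce to the sideways estimate of Proposition~\ref{pro:sideways}, convert the hyperbolic distance $D_n$ into a bound of the form $D_n\le 2\log M_n + O(1)$ (you via the $\cosh$ formula, the paper via the explicit inequality \eqref{eq:strip-dist}), and then apply Borel--Cantelli with $m=(2+\ep)\log\log n$. Your treatment is in fact slightly more complete in that you verify the second equality in the definition of $D_n$, which the paper leaves implicit; for the doubly-exponential moment the paper takes the shortcut $e^{D_1/3}\le \qq^{1/3}(1+M_1)^{2/3}\le c+\lambda_1 M_1$ and invokes the exponential moment $\Ex(e^{\lambda_1 M_1})<\infty$ directly, whereas your layer-cake argument achieves the same end.
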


\begin{proof}
It is clear that the $D_n$ are i.i.d. 
For the purpose of the proofs of this and the next corollary, set 
$$
M_n = \max \bigl\{ |\RE Z_t- \RE Z_{\tau(n-1)}| : \tau(n-1) \le t \le \tau(n) \bigr\}.
$$
These random variables are also i.i.d. With $\rho$ as in Proposition 
\ref{pro:sideways}, and for arbitrary $\ep > 0$,
$$
\sum_{n=2}^{\infty}
\Prob_{\im}[M_n \ge \tfrac{1+\ep}{\log(1/\rho)}\log n] \le
2 \sum_{n=2}^{\infty} \exp{\Bigl((\log\rho)\bigl\lfloor 
            \tfrac{1+\ep}{\log(1/\rho)}\log n\bigr\rfloor\Bigr)} 
\le \frac{2}{\rho} \sum_{n=2}^{\infty} \frac1{n^{1+\ep}}\,,
$$
which is finite. By the Borel-Cantelli Lemma,
$$
\limsup_{n\to \infty} \frac{M_n}{\log n} \le \frac{1}{\log(1/\rho)}
\quad\text{almost surely.}
$$
We also see that
\begin{equation}\label{eq:la1}
\Ex(e^{\la_1\,M_1}) < \infty \quad \text{for}\quad  0 < \la_1 < \log(1/\rho).
\end{equation}
By simple computations with the hyperbolic metric, for any 
$\zf = (z,w) \in \partial \Omega$, and thus
$z \in \partial \wt \Omega =\Lf_1 \cup \Lf_{-1}\,$, one has
\begin{equation}\label{eq:strip-dist}
\log(1 +|\RE z|^2) - \log\qq \le  d_{\HT}(\zf,\of) =
d_{\Hb}(z,\im) \le \log \qq +  2\log(1 + |\RE z|).
\end{equation}
Therefore $D_n \le \log \qq + 2\log (1+M_n)$, whence
as above,
$$
\sum_{n = 3}^{\infty} \Prob_{\im}[D_n \ge (2+\ep) \log\log n] < \infty
$$
for every $\ep > 0$. We get $\limsup D_n/\log\log n \le 2$ a.s.
Also, for some $c > 0$,
$e^{D_1/3} \le \qq^{1/3} (1+M_1)^{2/3} \le c + \la_1 M_1$.
Now \eqref{eq:la1} yields the doubly exponential moment condition for $D_1\,$. 
\end{proof}

From the last corollary and \eqref{eq:strip-dist}, we also get the
following.

\begin{cor}\label{cor:mumom}
With $\la_1 > 0$ as in \eqref{eq:la1},
$$
\int_{\partial \wt \Omega} \exp \bigl(\la_1 \RE z) \bigr)\, d\wt\mu(z) < \infty
$$
In particular, $\mu$ and $\wt \mu$ satisfy doubly exponential moment 
conditions: 
$$
\int_{\partial \wt \Omega} \exp \Bigl(\exp\bigl(d_{\Hb}(\im,z)/3\bigr)\Bigr)\, 
d\wt\mu(z) =
\int_{\partial \Omega} \exp \Bigl(\exp\bigl(d_{\HT}(\of,\zf)/3\bigr)\Bigr)\, 
d\mu(\zf) < \infty.
$$
\end{cor}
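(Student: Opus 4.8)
The plan is to read off both assertions from the lateral-displacement estimates already obtained in Proposition~\ref{pro:sideways} and Corollary~\ref{cor:inter}; essentially no new probabilistic input is required. Recall that $\im \in \Lf_0$ (since $\IM \im = 1 = \qq^0$), so that for $(Z_t)$ started at $\im$ one has $\sigma = \tau(1)$, and $\wt\mu$ is exactly the law of the exit position $Z_{\tau(1)}$ under $\Prob_{\im}$. First I would establish the genuine (single) exponential moment $\int_{\partial\wt\Omega}\exp(\la_1 \RE z)\,d\wt\mu(z) < \infty$, and then bootstrap it to the doubly exponential bound through the metric comparison \eqref{eq:strip-dist}.

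For the single exponential moment, let $M_1 = \max\{|\RE Z_t - \RE Z_{\tau(0)}| : \tau(0)\le t\le \tau(1)\}$ be the lateral oscillation variable from the proof of Corollary~\ref{cor:inter}. Since $\RE Z_0 = 0$ when $Z_0 = \im$, we have $|\RE Z_{\tau(1)}| \le M_1$, whence
$$
\int_{\partial\wt\Omega}\exp(\la_1\RE z)\,d\wt\mu(z) = \Ex_{\im}\bigl(\exp(\la_1 \RE Z_{\tau(1)})\bigr) \le \Ex_{\im}\bigl(\exp(\la_1 M_1)\bigr) < \infty
$$
by \eqref{eq:la1}, for $0 < \la_1 < \log(1/\rho)$. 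Replacing $\RE z$ by $|\RE z|$ in the same chain gives $\int_{\partial\wt\Omega}\exp(\la_1|\RE z|)\,d\wt\mu(z)\le \Ex_{\im}(\exp(\la_1 M_1)) < \infty$, the form I will actually use below.

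For the doubly exponential statement I would first observe that the two displayed integrals are literally equal: $\wt\mu$ is the image of $\mu$ under $\pi^{\Hb}$, and for $\zf = (z,w)\in\partial\Omega$ the points $\of$ and $\zf$ lie in a common copy of $\Hb$, so by the first case of \eqref{eq:metric} one has $d_{\HT}(\of,\zf) = d_{\Hb}(\im,z)$, precisely as recorded in \eqref{eq:strip-dist}. It then suffices to bound one of them. Using the upper estimate in \eqref{eq:strip-dist},
$$
\exp\bigl(d_{\HT}(\of,\zf)/3\bigr) \le \qq^{1/3}\,(1+|\RE z|)^{2/3}\,.
$$
Since $t\mapsto\qq^{1/3}(1+t)^{2/3}$ is sublinear, there is a constant $c>0$ with $\qq^{1/3}(1+t)^{2/3}\le c + \la_1 t$ for all $t\ge 0$, so that
$$
\exp\Bigl(\exp\bigl(d_{\HT}(\of,\zf)/3\bigr)\Bigr) \le e^{c}\exp(\la_1|\RE z|)\,,
$$
and integrating against $\mu$ (equivalently $\wt\mu$, since the integrand depends only on $z$) reduces finiteness to the moment bound of the preceding paragraph.

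I expect no real obstacle beyond bookkeeping: the substantive estimate is the geometric tail bound of Proposition~\ref{pro:sideways}, controlling how far $(Z_t)$ strays sideways before exiting a strip, and this corollary merely transcribes it into statements about the exit measures $\mu$ and $\wt\mu$. The only point requiring a little care is the passage from a true exponential moment to a doubly exponential one; this works precisely because the metric distortion in \eqref{eq:strip-dist} is only logarithmic in $|\RE z|$, so that $\exp(d_{\HT}/3)$ grows like a sublinear power of $|\RE z|$ — the same device used at the end of the proof of Corollary~\ref{cor:inter}.
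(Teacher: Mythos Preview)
Your proposal is correct and follows essentially the same route as the paper, which simply says the corollary follows ``from the last corollary and \eqref{eq:strip-dist}'': you have faithfully unpacked that reference, using $|\RE Z_{\tau(1)}|\le M_1$ together with \eqref{eq:la1} for the first part, and the metric comparison \eqref{eq:strip-dist} plus the sublinearity bound $\qq^{1/3}(1+t)^{2/3}\le c+\la_1 t$ (exactly as in the last line of the proof of Corollary~\ref{cor:inter}) for the second.
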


Finally, we anticipate a result from \cite{BSSW3} which appears very natural, but 
whose proof is quite subtle.

\begin{pro}\label{pro:dens} Let $\Omega = \Omega_v$ or $\Omega = \Sf_v^o \subset \HT$  
($v \in V(\T)$). 
Then for any starting point $\zf \in \Omega\,$, the exit 
measure $\mu_{\zf}^{\Omega}$ has a continuous, strictly positive
density with respect to Lebesgue measure on the finitely many bifurcation lines
that make up $\partial \Omega$.

The analogous statement holds on ``sliced'' hyperbolic plane.
\end{pro}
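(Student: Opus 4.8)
The plan is to write the exit density as the inward conormal derivative of the Dirichlet Green function of $\Omega$ and to treat separately the two loci of non-smoothness, which --- crucially --- are disjoint: the Dirichlet boundary $\partial\Omega$, made up of the far bifurcation lines, and the single interior bifurcation line $\Lf_v$ (present only for $\Omega=\Omega_v$), across which solutions merely satisfy the transmission condition \eqref{eq:bif}. Since the strips have positive width $\log\qq$, the line $\Lf_v$ sits at a height strictly between the heights of all boundary lines, so $\partial\Omega$ has no corners and every boundary point has a one-sided strip neighbourhood inside $\Omega$ on which the operator is the plain hyperbolic operator $y^2(\partial_x^2+\partial_y^2)+\al\,y\,\partial_y$ with a smooth (indeed analytic) boundary arc.

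First I would dispose of the single-strip case $\Omega=\Sf_v^o$. Here there is no interior bifurcation line, and before exiting $\Sf_v^o$ the process stays in the open strip and never meets $\LT$, so by Proposition \ref{pro:projections} the map $\pi^{\Hb}$ conjugates $(X_t)$ to hyperbolic Brownian motion with vertical drift $\al$ on the horocyclic strip $\pi^{\Hb}(\Sf_v^o)\subset\Hb$; the parameter $\beta$ plays no role inside a single strip. Thus $\mu_\zf^{\Sf_v^o}$ is the image under $(\pi^{\Hb})^{-1}$ of the exit distribution of this hyperbolic diffusion from a strip bounded by two horocycles, which after a horizontal translation and a scaling is a fixed classical problem, solvable by separation of variables (Fourier transform in $\RE z$); the resulting exit density is manifestly continuous, strictly positive, and decaying as $\RE z\to\pm\infty$. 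The same argument settles the sliced-$\Hb$ statement for one strip.

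For $\Omega=\Omega_v$ I would build the Dirichlet Green function $G^{\Omega_v}(\wf,\cdot)=\int_0^{\infty}\hb^{\Omega_v}_{\al,\beta}(t,\wf,\cdot)\,dt$ from the killed heat kernel; the integral is finite since $\tau^{\Omega_v}<\infty$ almost surely, and the kernel is positive and smooth in the interior, inherited from Proposition \ref{pro:sg}. Away from the pole $\wf$, the function $\zf\mapsto G^{\Omega_v}(\wf,\zf)$ is a bounded nonnegative weak solution of $\Lap_{\al,\beta}G=0$ with zero boundary values, hence a \emph{strong} solution by \cite[Theorem 5.9]{BSSW}: it is smooth in the interior of each strip and satisfies the bifurcation condition \eqref{eq:bif} across $\Lf_v$. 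The exit density is then the inward conormal derivative of $G^{\Omega_v}(\wf,\cdot)$ along $\partial\Omega_v$ with respect to the surface measure induced by $\ms_{\al,\beta}$, which reduces the claim to the $C^1$-regularity and non-vanishing of this derivative on each boundary line.

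The regularity up to the boundary is then purely classical, precisely because the transmission locus $\Lf_v$ is disjoint from $\partial\Omega_v$: in a one-sided strip neighbourhood of any point of a boundary line $\Lf_w\subset\partial\Omega_v$, the function $G^{\Omega_v}(\wf,\cdot)$ solves the hyperbolic equation, uniformly elliptic on sets of bounded $\RE z$, with an analytic boundary arc and vanishing Dirichlet data, so Schauder boundary estimates give smoothness --- indeed real-analyticity --- up to $\Lf_w$; the conormal derivative is therefore a continuous function of $\RE z$, and integrable by Corollary \ref{cor:mumom}. Strict positivity follows from the Hopf boundary point lemma: $G^{\Omega_v}(\wf,\cdot)$ is strictly positive in the interior (Harnack, via the positive heat kernel of Proposition \ref{pro:sg}) and vanishes on $\partial\Omega_v$, so its inward normal derivative is $>0$ at every boundary point; this is also consistent with the irreducibility in Lemma \ref{lem:measure}. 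Since the metric and density weights are smooth and positive along each line, the same continuity and positivity pass to the density with respect to Lebesgue measure $d\RE z$, as claimed. The main obstacle is the interplay, in constructing $G^{\Omega_v}$ and justifying the conormal-derivative formula, between the Dirichlet condition on $\partial\Omega_v$ and the non-classical transmission condition \eqref{eq:bif} on $\Lf_v$; the geometric fact that these two loci are disjoint is exactly what localises the two difficulties, letting the boundary analysis proceed as in the smooth hyperbolic setting while \cite[Theorem 5.9]{BSSW} supplies the interior matching across $\Lf_v$.
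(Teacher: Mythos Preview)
The paper does not prove Proposition~\ref{pro:dens}. Immediately before stating it, the authors write: ``Finally, we anticipate a result from \cite{BSSW3} which appears very natural, but whose proof is quite subtle.'' The proof is deferred entirely to the forthcoming companion paper \cite{BSSW3}, so there is nothing in the present paper to compare your proposal against.

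That said, your outline follows the natural strategy --- represent the exit density as the inward conormal derivative of the Dirichlet Green function, then use that the transmission locus $\Lf_v$ and the Dirichlet boundary $\partial\Omega_v$ are disjoint to localise: classical Schauder boundary regularity and the Hopf lemma near $\partial\Omega_v$, and \cite[Theorem~5.9]{BSSW} for the interior matching across $\Lf_v$. This is almost certainly the shape of the argument in \cite{BSSW3}. The authors' remark that the proof is ``quite subtle'' suggests, however, that some of the steps you pass over quickly require real work in the strip-complex setting: in particular, (i) the existence and off-diagonal estimates for the killed heat kernel and Green function on the \emph{unbounded} domain $\Omega_v$, (ii) the rigorous justification of the Poisson-kernel (conormal-derivative) representation of the exit measure when the boundary is non-compact and the ambient space has singularities, and (iii) the uniform control needed to pass from local boundary regularity to a globally continuous, integrable density along the infinite lines. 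Your sketch identifies the right geometric decoupling but would need to confront these points to become a proof.
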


\section{Rate of escape and convergence to the boundary at infinity}\label{sec:boundary}

\begin{thm}\label{thm:escape}
In the natural metric of $\HT$, the Brownian motion $(X_t)$ on $\HT$
generated by $\Lap_{\al,\beta}$ has the following rate of escape.
$$
\lim_{t \to \infty} \frac{1}{t} \,\dist_{\HT}(X_t,X_0) = 
|\ell(\al,\beta)| \quad \text{almost surely, where}\quad 
\ell(\alpha,\beta) = \frac{\log \qq}{\Ex(\tau)} \,\frac{\rha-1}{\rha+1}\,,
$$
with $\rha$ and $\Ex(\tau)$ given by Proposition \ref{pro:tau}(d) 
and \eqref{eq:Etau}, respectively. 
\end{thm}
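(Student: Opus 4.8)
The plan is to transfer the problem from continuous time to the embedded skeleton $(X_{\tau(n)})$, and there to split the $\HT$--distance into a hyperbolic and a tree part by means of Proposition~\ref{pro:metric}. First I would reduce to the skeleton. By Proposition~\ref{pro:tau} the increments $\tau(n)-\tau(n-1)$ are i.i.d.\ for $n\ge 2$ with finite mean $\Ex(\tau)$, so the strong law gives $\tau(n)/n\to\Ex(\tau)$, equivalently $\nb_t/t\to 1/\Ex(\tau)$ a.s., with $\nb_t$ as in \eqref{eq:nt}. For $\tau(n)\le t<\tau(n+1)$ the triangle inequality together with Corollary~\ref{cor:inter} gives $|\dist_\HT(X_t,X_0)-\dist_\HT(X_{\tau(n)},X_0)|\le D_n$, and since $D_n=o(n)$ a.s.\ this error is negligible after division by $t$. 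Hence it suffices to show that $\tfrac1n\dist_\HT(X_{\tau(n)},X_0)\to(\log\qq)\,|\Ex(Y)|$ a.s., for then $\tfrac1t\dist_\HT(X_t,X_0)\to\tfrac{1}{\Ex(\tau)}(\log\qq)|\Ex(Y)|=|\ell(\al,\beta)|$, using $\Ex(Y)=\tfrac{\rha-1}{\rha+1}$ from Proposition~\ref{pro:tau}(d). The possibly atypical first increment when $X_0\notin\LT$ does not affect the Ces\`aro limit.

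Next I would apply Proposition~\ref{pro:metric} to $X_{\tau(n)}=(Z_{\tau(n)},W_{\tau(n)})$ and $X_0=\of$, which yields, up to an additive error in $[0,2\delta]$,
\[
\dist_\HT(X_{\tau(n)},\of)=\dist_\Hb(Z_{\tau(n)},\im)+(\log\qq)\bigl(\dist_\T(W_{\tau(n)},o)-|Y_{\tau(n)}|\bigr).
\]
By Corollary~\ref{cor:RW}(a) the height $Y_{\tau(n)}$ is a nearest--neighbour random walk on $\Z$ with mean increment $\Ex(Y)$, so $Y_{\tau(n)}/n\to\Ex(Y)$ a.s. For the tree discrepancy I would use \eqref{eq:tree-conf}, expressing $\dist_\T(W_{\tau(n)},o)-|Y_{\tau(n)}|$ through the confluent level $\hor(W_{\tau(n)}\cf o)$: in the transient up--drift regime ($\rha>1$) the walk converges to an end of $\bd^*\T$ and this confluent stabilises, while in the down--drift regime ($\rha<1$) the height of $W_{\tau(n)}$ above its confluent with $o$ behaves like a positive--recurrent reflected walk and stays sublinear. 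Either way the discrepancy is $o(n)$ a.s.\ and contributes nothing to the speed.

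The crux is the hyperbolic term, and this is the step I expect to be the main obstacle. Here $(Z_{\tau(n)})$ is the right random walk on $\Aff(\Hb,\qq)$ (as in Corollary~\ref{cor:transience-Ztaun}); writing $Z_{\tau(n)}=X_n+\im\,\qq^{Y_n}$ one has $X_n=\sum_{k=1}^{n}\qq^{Y_{k-1}}b_k$, where the i.i.d.\ displacements $b_k=\RE Z_{\tau(1)}$ carry an exponential moment by Corollary~\ref{cor:mumom} (and the reflection $\mathfrak s$). Invoking the theory of affine random walks of Grincevicius~\cite{Gr1},~\cite{Gr2} together with this moment bound, $(X_n)$ converges a.s.\ to a finite boundary value on $\partial^*\Hb=\R$ when $\rha<1$, whereas $X_n/\qq^{Y_n}$ converges in law (and is tight) when $\rha>1$, by passing to the reversed increments and summing the resulting convergent perpetuity. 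In both cases the real part is thereby controlled relative to the imaginary part so that, via the elementary estimate \eqref{eq:hyp-conf} for the hyperbolic metric, $\dist_\Hb(Z_{\tau(n)},\im)=(\log\qq)\,|Y_{\tau(n)}|+o(n)$ a.s. The difficulty is precisely this uniform control of $\RE Z_{\tau(n)}$, which rests on the perpetuity structure of the affine walk and the moment condition; it is also the place where Kaimanovich's regular sequences~\cite{Kai} (and, on the tree side, the framework of~\cite{CKW}) provide an alternative route.

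Combining the three ingredients gives $\tfrac1n\dist_\HT(X_{\tau(n)},\of)\to(\log\qq)|\Ex(Y)|=(\log\qq)\tfrac{|\rha-1|}{\rha+1}$, and the reduction of the first paragraph then yields $\tfrac1t\dist_\HT(X_t,X_0)\to\tfrac{\log\qq}{\Ex(\tau)}\tfrac{|\rha-1|}{\rha+1}=|\ell(\al,\beta)|$ a.s. In the degenerate case $\rha=1$ one has $\Ex(Y)=0$, all three quantities are sublinear, and the limit is $0=|\ell(\al,\beta)|$, consistently.
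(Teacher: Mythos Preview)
Your overall architecture matches the paper's: reduce to the skeleton via the $\tau(n)$ and $D_n$, then split $\dist_{\HT}$ through Proposition~\ref{pro:metric} into the $\Hb$--, $\T$-- and $|Y|$--contributions, and show that each of the first two has linear rate $(\log\qq)|\Ex(Y)|$. The paper states this in one line at the end of \S\ref{sec:boundary}, deferring the real work to Lemma~\ref{lem:LLN} and Propositions~\ref{pro:Wtaun} and~\ref{pro:Ztaun}.

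Where you and the paper diverge is in handling $\dist_{\Hb}(Z_{\tau(n)},\im)$. The paper's primary route is exactly what you list as the ``alternative'': Kaimanovich's regular sequences (Lemma~\ref{lem:regH}). One only needs (i)~$\log\IM Z_{\tau(n)}/n\to(\log\qq)\Ex(Y)$, which is the LLN for $Y_{\tau(n)}$, and (ii)~$\dist_{\Hb}(Z_{\tau(n+1)},Z_{\tau(n)})/n\to 0$, which is Corollary~\ref{cor:inter}. That immediately gives $\dist_{\Hb}(Z_{\tau(n)},\im)/n\to(\log\qq)|\Ex(Y)|$ in all three regimes, with no separate treatment of $\RE Z_{\tau(n)}$ and no appeal to Grincevi\v{c}jus. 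The same device (Lemma~\ref{lem:regT}) handles the tree part uniformly. This is both shorter and avoids the case split.

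Your direct approach via the perpetuity has a genuine gap as written. For $\rha>1$ you argue that $X_n/\qq^{Y_n}$ is tight in law and conclude $\dist_{\Hb}(Z_{\tau(n)},\im)=(\log\qq)|Y_{\tau(n)}|+o(n)$ almost surely. But tightness is a distributional statement; it does not prevent $|X_n|/\qq^{Y_n}$ from being large along a random subsequence, so it does not by itself yield the a.s.\ control $\log\bigl(1+(X_n/\qq^{Y_n})^2\bigr)=o(n)$ that you need in \eqref{eq:hyp-conf}. One can rescue this by a crude a.s.\ bound, e.g.\ $|X_n|\le n\,\max_{k\le n}|b_k|\,\qq^{\max_{k\le n}Y_k}$, combined with $\max_k|b_k|=O(\log n)$ (exponential moments) and $(\max_{k\le n}Y_k - Y_n)/n\to 0$; but that is a different argument from the one you sketched. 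Your treatment of $\rha=1$ (``all three quantities are sublinear'') has the same issue for the hyperbolic term. The regular--sequence lemma sidesteps all of this, which is why the paper uses it.
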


The proof of this theorem will go hand in hand with the one of Theorem 
\ref{thm:Brownianlimit} below, concering convergence of $(X_t)$ to the boundary.

The tree $\T$ has its natural geometric compactification $\wh\T$ with boundary
at infinity $\bd \T = \bd^*\T \cup \{ \om \}$, see Figure~2. Analogously,
the hyperbolic plane $\Hb$ has its standard hyperbolic compactification $\wh\Hb$ 
with boundary $\bd \Hb = \bd^*\Hb \cup \{ \binfty \}$, where $\bd^*\Hb = \R$, see
Figure~3. Since $\HT$ is a topological subspace of $\Hb \times \T$, we can compactify
it as follows.

\begin{dfn}\label{def:compact}
The \emph{geometric compactification} $\wh\HT$ of $\HT$ is the closure of $\HT$ in
the compact space $\wh \Hb \times \wh \T$. The \emph{geometric boundary} at infinity
of $\HT$ is
$$
\bd\HT = \wh\HT \setminus \HT\,.
$$
\end{dfn}

The boundary consists of the following five pieces.
\begin{equation}\label{eq:pieces}
\bd\HT= \bigl( \{\binfty\}\times \bd^*\T \bigr) \cup \bigl( \bd^*\Hb \times \{\om\} \bigr)
\cup \bigl( \{\binfty\} \times \T \bigr) \cup \bigl( \Hb \times \{\om\} \bigr)
\cup \{(\binfty,\om) \}  \,.
\end{equation}
For a better understanding (and future use), we describe convergence to the boundary.

\begin{imp}\label{imp:convergence}
Consider a sequence $\zf_n=(z_n,w_n)$ in $\HT$, with $z_n=x_n+\im y_n$.
\begin{itemize}
\item[(a)\quad] $\zf_n \to (\binfty,\xi) \in \{\binfty\}\times \bd^*\T$ if 
$w_n \to \xi$ in $\wh\T$, in which case necessarily $z_n \to \binfty$.
\item[(b)\quad] $\zf_n \to (\zeta,\om) \in \bd^*\Hb \times \{\om\} $ if 
$z_n \to \zeta$ in $\wh\Hb$, that is, $x_n \to \zeta$ and $y_n \to 0$ as
sequences in $\R$. In this case necessarily $w_n \to \om$.
\item[(c)\quad] $\zf_n \to (\binfty,w) \in \{\binfty\} \times \T$ if 
$w_n \to w$ in $\T$ and $z_n \to \binfty$ in $\wh\Hb$, that is, $|x_n| \to +\infty$ and 
$y_n \to \qq^{\hor(w)}$ as sequences in $\R$. 
\item[(d)\quad] $\zf_n \to (z,\om) \in \Hb \times \{\om\}\;$ if 
$\;z_n \to z\;$ in $\;\Hb\;$ and $\;w_n \to \om\;$ in $\;\wh \T\,$, that is, 
$d(o,w_n \cf o) \to +\infty$ and $\hor(w_n) \to \log_{\qq}(\IM z)$.
\item[(e)\quad] $\zf_n \to (\binfty,\om)$ if $z_n \to \binfty$ and $w_n \to \om$.
In this case, up to passing to a sub-sequence, we may assume in addition that
there is $\tau \in [-\infty\,,\,+\infty]$ such that $\hor(w_n) \to \tau$ and
$y_n \to \qq^\tau \in [0\,,\,+\infty]$. (Each value $\tau$ can be attained in
the limit by some sequence $\zf_n$.) 
\end{itemize}
\end{imp}

\begin{thm}\label{thm:Brownianlimit}
In the topology of $\wh\HT$, the Brownian motion $X_t=(Z_t,W_t)$ on $\HT$
generated by $\Lap_{\al,\beta}$ converges almost 
surely to a boundary-valued limit random variable 
$X_{\infty} = (Z_{\infty},W_{\infty})$.
Writing $\nu_{\zf}$ for its distribution when $X_0=\zf$, we have the following. 
\begin{itemize} 
\item[(i)] If $\ell(\al,\beta) > 0$ then 
$X_{\infty} \in \{\binfty\} \times \bd^*\T$, and all of the latter set is charged
by $\nu_{\zf}$.
\item[(ii)] If $\ell(\al,\beta) < 0$ then 
$X_{\infty} \in \bd^*\Hb \times \{\om\}$, and all of the latter set is charged
by $\nu_{\zf}$.
\item[(iii)] If $\ell(\al,\beta) = 0$ then
$X_{\infty} = (\binfty,\om)$, a deterministic limit. 
\end{itemize}
\end{thm}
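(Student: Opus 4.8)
The plan is to reduce everything to the embedded random walk $\bigl(X_{\tau(n)}\bigr)$ at the bifurcation times and to analyse its three projections separately, with the sign of the height drift $\Ex(Y)=\frac{\rha-1}{\rha+1}$ dictating which case occurs. First I would show that $X_t$ converges in $\wh\HT$ if and only if $X_{\tau(n)}$ does, with the same limit. Indeed, between $\tau(n)$ and $\tau(n+1)$ the process stays in a single strip, so $W_t$ lies within tree-distance $1$ of $W_{\tau(n)}$, and by Corollary \ref{cor:inter} we have $\dist_{\Hb}(Z_t,Z_{\tau(n)})\le D_n=O(\log\log n)$ almost surely. Since the induced walk is transient (Corollary \ref{cor:transience-Ztaun}), $\dist_{\HT}(X_{\tau(n)},\of)\to\infty$, so these excursions are negligible relative to the distance travelled and do not affect the boundary limit; this also reduces the rate of escape along $t$ to the one along $\tau(n)$ via $\tau(n)/n\to\Ex(\tau)$.

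Next I would treat the tree projection. By Corollary \ref{cor:RW}(b), $\bigl(W_{\tau(n)}\bigr)$ is a transient nearest-neighbour random walk on $\T$, hence converges almost surely to a random end $W_{\infty}\in\bd\T$. Its height $\hor(W_{\tau(n)})=Y_{\tau(n)}$ is the $\pm1$ walk on $\Z$ of Corollary \ref{cor:RW}(a), with mean increment $\Ex(Y)=\frac{\rha-1}{\rha+1}$. When $\rha>1$ the strong law gives $\hor(W_{\tau(n)})\to+\infty$, forcing $W_{\infty}\in\bd^*\T$, and irreducibility (Lemma \ref{lem:measure}, cf.\ \cite{CKW}) makes its law charge all of $\bd^*\T$. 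When $\rha\le1$ the drift is nonpositive, so $\hor(W_{\tau(n)})$ does not tend to $+\infty$; a transient tree walk whose height does not escape upward can only converge to $\om$, whence $W_{\infty}=\om$ almost surely. (A direct first-passage computation confirms that the probability of reaching the predecessor equals $1$ precisely when $\rha\le1$.)

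Then I would treat the $\Hb$-projection $\bigl(Z_{\tau(n)}\bigr)$, which by \eqref{eq:identif} is a transient right random walk on $\Aff(\Hb,\qq)$ whose law $\wt\mu$ has a doubly-exponential moment (Corollary \ref{cor:mumom}). If $\rha>1$, then $\IM Z_t=\qq^{Y_t}\to+\infty$, so $Z_t\to\binfty$ in $\wh\Hb$; combined with $W_{\infty}\in\bd^*\T$ this gives $X_{\infty}=(\binfty,W_{\infty})\in\{\binfty\}\times\bd^*\T$ by the convergence description (a) of \ref{imp:convergence}, proving (i). If $\rha<1$, then $\IM Z_t\to0$ and the affine walk is contractive in the mean; by the theory of Grincevicius \cite{Gr1, Gr2} the real parts $\RE Z_{\tau(n)}$ converge almost surely to a random $Z_{\infty}\in\R=\bd^*\Hb$, whose law charges all of $\R$ since $\wt\mu$ has a strictly positive density (Proposition \ref{pro:dens}). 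Hence $Z_t\to Z_{\infty}$, and together with $W_{\infty}=\om$ the convergence description (b) of \ref{imp:convergence} yields (ii).

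Finally, the critical case $\rha=1$ is the crux. Here $\hor(W_{\tau(n)})$ is recurrent, so $\IM Z_t$ oscillates, returning near $1$ and climbing high infinitely often; in particular $Z_t$ can converge neither to a point of $\R$ (which would need $\IM Z_t\to0$) nor to an interior point of $\Hb$ (which would need $\IM Z_t$ to converge to a finite positive value). Since $W_{\infty}=\om$ from the previous step, the only admissible limit is $X_{\infty}=(\binfty,\om)$, matching the convergence description (e). The genuine content is that $Z_t$ does converge, i.e.\ that $|Z_{\tau(n)}|\to\infty$ almost surely, which is exactly the assertion that a centred random walk on the affine group escapes to infinity. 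I expect this to be the main obstacle: unlike the contractive and expanding regimes it is not governed by a law of large numbers, and establishing $Z_t\to\binfty$ requires the finer analysis of the critical affine walk, together with Kaimanovich's regular-sequence machinery \cite{Kai} to upgrade convergence of the embedded chain to the continuous-time process and to pin down $\ell(\al,\beta)=0$. Assembling the three cases and the charging statements then completes the proof, and the rate of escape of Theorem \ref{thm:escape} drops out along the way via Proposition \ref{pro:metric}.
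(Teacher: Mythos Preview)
Your overall strategy matches the paper's: analyse the projections $(W_t)$ and $(Z_t)$ separately, with the sign of $\Ex(Y)=\frac{\rha-1}{\rha+1}$ governing the three cases, and then assemble via the convergence description \ref{imp:convergence}. For $\rha>1$ and $\rha<1$ your outline is essentially correct and equivalent to the paper's use of regular sequences (Lemmas \ref{lem:regH}, \ref{lem:regT}); your shortcut $\IM Z_t=\qq^{Y_t}\to\infty$ for $\rha>1$ is fine, and for $\rha<1$ the explicit series $Z_\infty=\sum_k A_1\cdots A_{k-1}B_k$ (rather than an appeal to Grincevi\v cjus, whose papers concern the critical CLT) gives the almost-sure limit of $\RE Z_{\tau(n)}$.

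The genuine gap is in the critical case $\rha=1$, and it lies precisely in the step you flag as ``upgrading convergence of the embedded chain to the continuous-time process.'' Your proposed tool for this---Kaimanovich's regular sequences---does not apply here: as Lemmas \ref{lem:regH} and \ref{lem:regT} explicitly note, regularity with rate $0$ gives \emph{no} convergence statement. Nor does your opening reduction via $D_n=O(\log\log n)$ suffice: Brofferio's result \cite{Br} gives only \emph{topological} convergence $Z_{\tau(n)}\to\binfty$, with no rate. Since $Y_{\tau(n)}$ is recurrent, there are infinitely many $n$ with $\IM Z_{\tau(n)}$ bounded, so escape from a set $V_r=\{|x|\le r,\ 0\le y\le\qq^r\}$ occurs horizontally; during the ensuing excursion the real part can move by an amount comparable to $M_n$ (which is $O(\log n)$, not $O(\log\log n)$, at unit height), and nothing prevents re-entry into $V_r$. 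One must show this happens only finitely often.

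The paper handles this with a quantitative Borel--Cantelli argument: it bounds $\Prob_z[Z_t\in V_r\ \text{for some}\ t<\tau(1)]$ for $z\notin V_r$ via Proposition \ref{pro:sideways}, integrates the resulting function $f_r$ against \emph{right} Haar measure on $\Aff(\Hb,\qq)$, and then invokes \cite[Lemma 1]{Br} to conclude that $\int f_r(gz)\,d\mathcal U(z)<\infty$ for almost every shift $g$, where $\mathcal U=\sum_n\wt\mu^{(n)}$ is the potential kernel. This yields finiteness of $\sum_n\Prob[\text{excursion after }\tau(n)\text{ enters }g^{-1}V_r]$, whence $Z_t\to\binfty$ almost surely. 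This argument is the actual content of the critical case (see also Remark \ref{rmk:gap}), and it is neither a consequence of regular sequences nor of the bound on $D_n$.
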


The most useful tool for proving the last two theorems is the notion of
regular sequences of {\sc Kaimanovich}~\cite{Kai}, which we formulate here just for 
hyperbolic plane and tree.

\begin{dfn}\label{dfn:regular} Let $\mathbb X = \Hb$ or $\mathbb X= \T$. 
A sequence $(z_n)$ in $\mathbb X$ is called regular with \emph{rate} 
$r \ge 0$ if there is a geodesic ray
$(\pi_t)_{t \ge 0}$ in $\mathbb X$ [that is, $\dist_{\mathbb X}(\pi_t,\pi_s) 
= |t-s|$ for all $s,t \ge 0$] such that
$$
\dist_{\mathbb X}(x_n\,, \pi_{rn})/n \to 0 \quad \text{as}\; n \to \infty\,.
$$ 
\end{dfn}

The following was shown in \cite{Kai}.

\begin{lem}\label{lem:regH}
A sequence $(z_n)$ in $\Hb$ is regular if and only if there is
$b \in \R$ such that 
$$ 
\log \IM(z_n)/ n \to b \AND \dist_{\Hb}(z_{n+1}\,,z_n)/n \to 0.
$$
In this case, $r = |b|$ and $\dist_{\Hb}(z_n,z_0)/n \to r$.

Furthermore, if $b > 0$ then $z_n \to \binfty$ in the topology of $\wh\Hb$, 
while if $b < 0$ then there is some $\zeta \in \partial^* \Hb$ such that
$z_n \to \zeta$ in the topology of $\wh\Hb$. 
(There is no general statement of this form when $b=0$.)
\end{lem}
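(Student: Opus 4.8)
The plan is to reduce everything to the explicit Euclidean form of the hyperbolic distance,
$$
\cosh \dist_{\Hb}(z,z') = 1 + \frac{|z-z'|^2}{2\,\IM z\,\IM z'}\,,
$$
together with two elementary facts about the Busemann function $z \mapsto \log\IM z$: it is $1$-Lipschitz for $\dist_{\Hb}$, and along any geodesic ray $(\pi_t)$ the limit $\log\IM(\pi_t)/t$ exists and equals $+1$ when the ray tends to $\binfty$ (the upward vertical rays) and $-1$ otherwise, since every semicircular ray tends to a point of $\bd^*\Hb = \R$. For the forward implication I would assume $(z_n)$ regular with rate $r$ and ray $(\pi_t)$, so that $\dist_{\Hb}(z_n,\pi_{rn})/n \to 0$. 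The Lipschitz bound gives $|\log\IM z_n - \log\IM \pi_{rn}| = o(n)$, while $\log\IM(\pi_{rn})/n \to \pm r$ (the value being $0$ when $r=0$); hence $\log\IM(z_n)/n \to b$ with $b = \pm r$ and $|b|=r$. Writing $\dist_{\Hb}(z_{n+1},z_n) \le \dist_{\Hb}(z_{n+1},\pi_{r(n+1)}) + r + \dist_{\Hb}(\pi_{rn},z_n) = r + o(n)$ then gives $\dist_{\Hb}(z_{n+1},z_n)/n \to 0$, establishing the stated conditions.

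For the converse, which I expect to be the main difficulty, I would first convert the increment hypothesis into control of the horizontal motion. Writing $z_n = x_n + \im y_n$ and $\log y_n = bn + o(n)$, the distance formula applied to consecutive points yields $(x_{n+1}-x_n)^2 \le 2\,y_n y_{n+1}\cosh\dist_{\Hb}(z_{n+1},z_n) = e^{2bn + o(n)}$, so that $|x_{n+1}-x_n| \le e^{bn + o(n)}$. If $b>0$, the sum $\sum_k e^{bk + o(k)}$ is dominated by its last term, whence $\log|x_n| \le bn + o(n)$; taking the upward vertical ray $\pi_t = \im\, e^t$, both the horizontal and the vertical contributions to $\cosh\dist_{\Hb}(z_n,\im\, e^{bn})$ are of size $e^{o(n)}$, so $\dist_{\Hb}(z_n,\im\, e^{bn})/n \to 0$ and $(z_n)$ is regular of rate $b=|b|$. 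If $b<0$ the same bound is summable, so $(x_n)$ is Cauchy with a limit $\zeta \in \R$ satisfying $|x_n - \zeta| \le e^{bn + o(n)}$; taking $\pi_t = \zeta + \im\, e^{-t}$ one checks in the same way that $\dist_{\Hb}(z_n,\pi_{-bn})/n \to 0$, giving rate $-b = |b|$.

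The borderline case $b=0$ is where the naive estimate $\dist_{\Hb}(z_n,z_0) \le \sum_{k<n}\dist_{\Hb}(z_{k+1},z_k)$ is too lossy (the increments are $o(n)$ but may sum to $o(n^2)$), and I would instead bound the distance to the origin directly. Since $\max_{k<n} o(k) = o(n)$, the increment bound forces $\log|x_n| = o(n)$, and $\log y_n = o(n)$ by hypothesis; substituting into the distance formula makes every term of $\cosh\dist_{\Hb}(z_n,z_0)$ of size $e^{o(n)}$, so $\dist_{\Hb}(z_n,z_0)/n \to 0$ and $(z_n)$ is regular of rate $0$ via the constant ray. In all three cases $r=|b|$. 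The asymptotics $\dist_{\Hb}(z_n,z_0)/n \to r$ then follow directly from regularity, since $|\dist_{\Hb}(z_n,z_0) - \dist_{\Hb}(z_n,\pi_0)|$ is bounded by the constant $\dist_{\Hb}(z_0,\pi_0)$ while $|\dist_{\Hb}(z_n,\pi_0) - rn| \le \dist_{\Hb}(z_n,\pi_{rn}) = o(n)$.

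Finally I would read off the convergence statements from the topology of $\wh\Hb$, in which $z_n \to \binfty$ precisely when $|z_n| \to \infty$: for $b>0$ this holds because $y_n \to \infty$, while for $b<0$ we have $y_n \to 0$ together with $x_n \to \zeta$, so $z_n \to \zeta \in \bd^*\Hb$. The main obstacle throughout is the converse implication, and within it the case $b=0$, where regularity cannot be extracted by summing the increments $\dist_{\Hb}(z_{n+1},z_n)$ and must instead come from the direct estimate of $\dist_{\Hb}(z_n,z_0)$ above.
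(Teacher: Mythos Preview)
Your argument is correct. Note, however, that the paper does not actually prove this lemma: it is stated with the attribution ``The following was shown in \cite{Kai}'' and no proof is given. So there is nothing to compare against; you have supplied a self-contained proof where the paper simply cites the literature.

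A couple of small points worth tightening. In the forward direction, your claim that $\log\IM(\pi_t)/t \to \pm 1$ along any geodesic ray is correct but deserves a line of justification: on a vertical ray it is immediate, and on a semicircular ray $\pi_t = c + R e^{i\theta(t)}$ one has $t = \log\tan(\theta/2) + \text{const}$, whence $\IM\pi_t \sim 2R\,e^{-t}$ as $\theta \to 0$. In the $b=0$ case your phrase ``$\max_{k<n} o(k) = o(n)$'' is informal; what you really use is that if $f(k)/k \to 0$ then $\max_{k<n} f(k) \le \max\bigl(M_K,\epsilon n\bigr)$ once $f(k) < \epsilon k$ for $k \ge K$, so indeed $\sum_{k<n} e^{f(k)} \le n\,e^{o(n)} = e^{o(n)}$. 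With these clarifications the proof stands.
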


The analogue for trees was proved in \cite{CKW}.

\begin{lem}\label{lem:regT}
A sequence $(w_n)$ in $\T$ is regular if and only if there is
$b \in \R$ such that 
$$ 
\hor(w_n)/ n \to b \AND \dist_{\T}(w_{n+1}\,,w_n)/n \to 0.
$$
In this case, $r = |b|$ and $\dist_{\T}(w_n,w_0)/n \to r$.

Furthermore, if $b > 0$ then $w_n \to \varpi$ in the topology of $\wh\T$, 
while if $b < 0$ then there is some $\xi \in \partial^* \T$ such that
$w_n \to \xi$ in the topology of $\wh\T$. 
(Again, there is no general statement of this form when $b=0$.)
\end{lem}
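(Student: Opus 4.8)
The plan is to mirror the hyperbolic argument of Lemma~\ref{lem:regH}, using the confluent identity \eqref{eq:tree-conf} in place of the distance estimate for $\Hb$. Two elementary facts are used repeatedly: $\hor$ is $1$-Lipschitz (from \eqref{eq:tree-conf}, $|\hor(v)-\hor(w)|\le\dist_{\T}(v,w)$), and along any geodesic ray $(\pi_t)$ the coordinate $\hor(\pi_t)$ is eventually affine, with slope $+1$ if the ray represents an end of $\bd^*\T$ and slope $-1$ if it represents $\om$. The forward implication is then immediate. If $(w_n)$ is regular via a ray $(\pi_t)$ with rate $r$, then $|\hor(w_n)-\hor(\pi_{rn})|\le\dist_{\T}(w_n,\pi_{rn})=o(n)$ forces $\hor(w_n)/n\to b$ with $|b|=r$; the triangle inequality together with $\dist_{\T}(\pi_{rn},\pi_{r(n+1)})=r$ gives $\dist_{\T}(w_{n+1},w_n)/n\to0$; and $|\dist_{\T}(w_n,w_0)-rn|\le\dist_{\T}(w_n,\pi_{rn})+\dist_{\T}(w_0,\pi_0)=o(n)$ yields $\dist_{\T}(w_n,w_0)/n\to r$.

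For the converse the central tool is the ultrametric inequality for the confluent, $\hor(u\cf w)\ge\min\{\hor(u\cf v),\hor(v\cf w)\}$, valid because the three ancestry rays $\geo{u\,\om}$, $\geo{v\,\om}$, $\geo{w\,\om}$ merge in the familiar ultrametric pattern. Iterating it along consecutive indices gives $\hor(w_m\cf w_n)\ge\min_{m\le k<n}\hor(w_k\cf w_{k+1})$ for $m<n$. By \eqref{eq:tree-conf} and the two hypotheses, $\hor(w_k\cf w_{k+1})=\tfrac12\bigl(\hor(w_k)+\hor(w_{k+1})-\dist_{\T}(w_k,w_{k+1})\bigr)=bk+o(k)$. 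Since $k\mapsto bk$ is monotone, the minimum is attained at the endpoint selected by the sign of $b$, and together with the trivial upper bound $\hor(w_m\cf w_n)\le\min\{\hor(w_m),\hor(w_n)\}$ this pins down $\hor(w_m\cf w_n)=bm+o(m)$ when $b>0$ and $\hor(w_m\cf w_n)=bn+o(n)$ when $b<0$. This single estimate drives everything below.

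It remains to read off the limit end, the approximating ray, and the rate. When $b>0$ the estimate gives $\hor(w_m\cf w_n)\to+\infty$ as $m,n\to\infty$, which is exactly the Cauchy condition for convergence of $(w_n)$ in $\wh\T$ to an end $\xi$; when $b<0$ one has directly $\hor(w_n\cf o)=\tfrac12(\hor(w_n)-\dist_{\T}(w_n,o))\le\tfrac12\hor(w_n)\to-\infty$, i.e.\ $w_n\to\om$. In either case set $\pi=\geo{w_0\,\zeta}$ for the limit point $\zeta$; the estimate shows that the nearest-point projection of $w_n$ onto $\pi$ has height $\hor(w_n)+o(n)$, hence sits at parameter $|b|n+o(n)$ along $\pi$, and that $w_n$ is within $o(n)$ of it, so $\dist_{\T}(w_n,\pi_{|b|n})=o(n)$. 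Thus $(w_n)$ is regular with rate $r=|b|$, and $\dist_{\T}(w_n,w_0)/n\to|b|$ as in the forward direction. The two subcases of the ``furthermore'' clause are precisely $\zeta=\om$ and $\zeta\in\bd^*\T$, distinguished by whether $\hor$ decreases or increases along $\pi$. The degenerate case $b=0$ is handled separately: the chaining still gives $\hor(w_n\cf o)=o(n)$, so $\dist_{\T}(w_n,o)=\hor(w_n)-2\hor(w_n\cf o)=o(n)$ and $(w_n)$ is regular with rate $0$, with no boundary statement claimed.

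The main obstacle is the converse, and within it the uniform confluent estimate of the second paragraph. The slow-increment hypothesis controls only consecutive pairs $w_k,w_{k+1}$ and does \emph{not} telescope into a usable bound on $\dist_{\T}(w_m,w_n)$, so it is the ultrametric chaining, rather than the triangle inequality, that makes the argument work; one must check that the $o(k)$ errors, passed through the minimum, remain $o(\cdot)$ in the relevant index uniformly in the free one. Once this estimate is available, identification of the limit end through the nested cones of $\wh\T$ and the construction of the tracking ray are comparatively soft.
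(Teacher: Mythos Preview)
The paper does not give a proof of this lemma at all; it simply records that ``the analogue for trees was proved in \cite{CKW}'' and moves on. Your self-contained argument via the ultrametric chaining inequality $\hor(u\cf w)\ge\min\{\hor(u\cf v),\hor(v\cf w)\}$ is sound: the iteration $\hor(w_m\cf w_n)\ge\min_{m\le k<n}\hor(w_k\cf w_{k+1})$ together with $\hor(w_k\cf w_{k+1})=bk+o(k)$ does give the uniform confluent estimate, and from there the identification of the limiting end and the tracking ray $\geo{w_0\,\zeta}$ follow as you describe. The $b=0$ case is also handled correctly. So you have supplied a genuine proof where the paper only gives a citation, and by essentially the same mechanism as in \cite{CKW}.

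One point to flag: your proof establishes that $b>0$ forces $w_n\to\xi\in\bd^*\T$ and $b<0$ forces $w_n\to\om$, which is the \emph{opposite} of what the lemma as printed asserts. Your version is the correct one --- $\hor(w_n)\to+\infty$ cannot be compatible with $w_n\to\om$ --- and it is also what the paper actually \emph{uses} in the proof of Proposition~\ref{pro:Wtaun} (where $\rha<1$ gives $b<0$ and convergence to $\om$, while $\rha>1$ gives $b>0$ and convergence into $\bd^*\T$). So the ``furthermore'' clause of the lemma has its signs swapped; you have proved the right thing and should not try to reconcile your argument with the misprinted statement.

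A minor cosmetic point: in the $b=0$ paragraph you write $w_n\cf o$ where the chaining really gives $w_n\cf w_0$; the difference is $O(1)$ and immaterial, but worth cleaning up.
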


Before embarking on the proofs of the above two theorems, we also
need the following.
\begin{lem}\label{lem:LLN} \hspace{20pt}
$\displaystyle
\lim_{t \to \infty} Y_t/t = \ell(\al,\beta)/\log \qq \quad\text{almost surely,}
$
where (recall) $Y_t= \pi^{\R}(X_t)$. 
\end{lem}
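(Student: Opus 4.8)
The plan is to analyse $(Y_t)$ through the embedded skeleton $(Y_{\tau(n)})$ attached to the stopping times $\tau(n)$ of \eqref{eq:stop}, combining two strong laws of large numbers with a renewal-type interpolation.

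First I would use that, for $n \ge 1$, $Y_{\tau(n)}$ is an integer and that, by Corollary \ref{cor:RW}(a), the sequence $(Y_{\tau(n)})_{n \ge 1}$ is a nearest-neighbour random walk on $\Z$ whose increments are i.i.d. with mean $\Ex(Y) = (\rha - 1)/(\rha + 1)$ from Proposition \ref{pro:tau}(d). Simultaneously, by Proposition \ref{pro:tau}(a),(b),(f), the increments $\tau(n) - \tau(n-1)$ are independent, a.s. finite, identically distributed for $n \ge 2$, with finite mean $\Ex(\tau)$ given in \eqref{eq:Etau}. The two strong laws then yield, almost surely,
$$
\frac{Y_{\tau(n)}}{n} \to \Ex(Y) \AND \frac{\tau(n)}{n} \to \Ex(\tau)\,,
$$
the possibly deviant first terms $Y_{\tau(1)}$ and $\tau(1)$ being negligible after division by $n$. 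Since $\Ex(\tau) > 0$, dividing gives
$$
\frac{Y_{\tau(n)}}{\tau(n)} \;\to\; \frac{\Ex(Y)}{\Ex(\tau)} = \frac{1}{\Ex(\tau)}\,\frac{\rha - 1}{\rha + 1} = \ell(\al,\beta)/\log \qq
$$
almost surely, which is precisely the claimed limit along the subsequence $(\tau(n))$.

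It then remains to interpolate to continuous time via the counting variable $\nb_t$ of \eqref{eq:nt}. The key observation is that, by the definition \eqref{eq:stop} and the continuity of the paths of $(Y_t)$, between $\tau(n)$ and $\tau(n+1)$ the process cannot reach any integer other than $Y_{\tau(n)}$, so $|Y_t - Y_{\tau(n)}| \le 1$ on $[\tau(n), \tau(n+1)]$; in particular $|Y_t - Y_{\tau(\nb_t)}| \le 1$ for all $t$. Because $\tau(n) \to \infty$ we have $\nb_t \to \infty$ a.s., and because $\tau(n)/n \to \Ex(\tau) > 0$ we also get $\tau(\nb_t + 1)/\tau(\nb_t) \to 1$, hence $\tau(\nb_t)/t \to 1$ by the sandwich $\tau(\nb_t) \le t < \tau(\nb_t + 1)$. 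Writing
$$
\frac{Y_t}{t} = \frac{Y_{\tau(\nb_t)}}{\tau(\nb_t)} \cdot \frac{\tau(\nb_t)}{t} + \frac{Y_t - Y_{\tau(\nb_t)}}{t}\,,
$$
the first factor tends to $\ell(\al,\beta)/\log \qq$, the second to $1$, and the last term is $O(1/t)$; this gives $Y_t/t \to \ell(\al,\beta)/\log \qq$ almost surely.

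I do not expect a serious obstacle here, as the statement is in essence a renewal law of large numbers. The only point demanding genuine care is the passage from the discrete skeleton to continuous time: one must control $Y_t$ throughout each excursion between consecutive $\tau(n)$. This is exactly where the definition of the stopping times pays off, confining $Y_t$ to an interval of radius one about the integer $Y_{\tau(\nb_t)}$, so that the excursion contributes only a bounded error that vanishes after division by $t$.
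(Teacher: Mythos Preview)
Your proposal is correct and follows essentially the same route as the paper: strong law for $Y_{\tau(n)}/n$ via Corollary~\ref{cor:RW}, strong law for $\tau(n)/n$ via Proposition~\ref{pro:tau}, then interpolation through $\nb_t$ using that $Y_t$ stays within distance~$1$ of $Y_{\tau(\nb_t)}$. Your formulation of the excursion bound $|Y_t - Y_{\tau(\nb_t)}| \le 1$ is in fact slightly cleaner than the paper's phrasing that ``$Y_t$ lies between $Y_{\tau(\nb_t)}$ and $Y_{\tau(\nb_t+1)}$'', since the path need not be monotone on that interval.
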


\begin{proof} Corollary \ref{cor:RW} and the Law
of Large Numbers imply that $\frac{1}{n}Y_{\tau(n)}\to \frac{\rha-1}{\rha + 1}$
almost surely.  Again by the law of large numbers, Proposition \ref{pro:tau}
tells us that $\tau(n)/n \to \Ex(\tau)$ almost surely.
Combining these two facts, we get that 
$Y_{\tau(n)}/\tau(n) \to \frac{\rha-1}{\rha + 1}\big/\Ex(\tau)$
almost surely. 
Given $t > 0$, let the random $\nb_t \in \N$ be as in \eqref{eq:nt}. Then $\nb_t \to \infty$ 
and $\tau(\nb_t)/t \to 1$ almost surely, as $t \to \infty$. By construction, 
$Y_t$ lies between $Y_{\tau(\nb_t)}$ and $Y_{\tau(\nb_t+1)}\,$, which differ by $1$.
Therefore the almost sure limit
$$
\lim_{t \to \infty} \frac{Y_t}{t} 
= \lim_{t \to \infty} \frac{Y_{\tau(\nb_t)}}{t} 
= \lim_{t \to \infty} \frac{Y_{\tau(\nb_t)}}{\tau(\nb_t)}\,\frac{\tau(\nb_t)}{t}
$$
exists and has the proposed value.
\end{proof}

Let us now consider the process $(W_t)$ on $\T$.

\begin{pro}\label{pro:Wtaun}
Let $\rha$ be as in Proposition \ref{pro:tau}(d) and $\ell(\al,\beta)$ as in 
Theorem \ref{thm:escape}. Then
$$
\lim_{t \to \infty} \frac{1}{t}\,\dist_{\T}(W_t\,,W_0) = 
\frac{1}{\log\qq}\,|\ell(\al,\beta)|\quad\text{almost surely.}
$$
\noindent
If $\ell(\al,\beta) \le 0$ ($\!\!\iff \rha \le 1$) then for any starting
point $w \in \T$, 
$$
\lim_{t \to \infty} W_t = \varpi \quad \text{almost surely in the topology of}\;\; 
\wh \T\,.
$$
If $\ell(\al,\beta) > 0$ ($\!\!\iff \rha > 1$) then there is a $\partial^*\T$-valued
random variable $W_{\infty}$ such that for any starting
point $w \in \T$, we have almost surely that
$$
\lim_{t \to \infty} W_t = W_{\infty} \quad \text{in the topology of}\; \;
\wh \T\,.
$$
In this case, let $\nu_w^{\partial \T}$ be the distribution of $W_{\infty}\,$,
given that $W_0 = w$. This is a probability measure that has a strictly positive, continuous, 
bounded density with respect to the ``Lebesgue'' measure $\la^*$ on $\partial^*\T$
explained in Remark \ref{rmk:density-tree}.
\end{pro}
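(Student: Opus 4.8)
The plan is to push every assertion through the discrete-time random walk $(W_{\tau(n)})_{n\ge 1}$ of Corollary \ref{cor:RW}(b) and then interpolate to continuous time exactly as in the proof of Lemma \ref{lem:LLN}; throughout I write $b=\frac{\rha-1}{\rha+1}$, so that $b$ carries the sign of $\rha-1$. For the rate of escape I first note that $\hor(W_{\tau(n)})=Y_{\tau(n)}$, so Corollary \ref{cor:RW}(a) and the strong law of large numbers give $\hor(W_{\tau(n)})/n\to b$ almost surely, while $\dist_{\T}(W_{\tau(n+1)},W_{\tau(n)})=1$ because the walk is nearest neighbour. By Lemma \ref{lem:regT}, $(W_{\tau(n)})$ is then a regular sequence of rate $r=|b|$, so that $\dist_{\T}(W_{\tau(n)},W_0)/n\to|b|$ (here I use only the rate part of the lemma, which is insensitive to the direction of escape). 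To reach continuous time I invoke $\nb_t$ from \eqref{eq:nt}: as in Lemma \ref{lem:LLN}, $\tau(\nb_t)/t\to 1$ and $\nb_t/\tau(\nb_t)\to 1/\Ex(\tau)$ almost surely, while between $\tau(\nb_t)$ and $\tau(\nb_t+1)$ the process stays in a single strip, so $W_t$ remains on one edge and $|\dist_{\T}(W_t,W_0)-\dist_{\T}(W_{\tau(\nb_t)},W_0)|\le 1$. Combining these gives $\dist_{\T}(W_t,W_0)/t\to|b|/\Ex(\tau)=|\ell(\al,\beta)|/\log\qq$, with $\Ex(\tau)$ as in \eqref{eq:Etau}.

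For the boundary behaviour when $\rha\le 1$, I would compute the first-passage probability $q=\Prob_v[\text{the walk ever reaches } v^-]$, which by $\Aff(\T)$-invariance is independent of $v$ and, conditioning on the first step, satisfies $q=\frac{1}{1+\rha}+\frac{\rha}{1+\rha}\,q^2$. Its minimal nonnegative solution is $q=1$ \emph{precisely} when $\rha\le 1$. From $q=1$ the walk reaches every ancestor of $o$ almost surely; moreover, for each vertex $c$ it cannot stay forever in the subtree of descendants of $c$, since on entering that subtree (necessarily through $c$) it returns to $c^-$ almost surely. As the walk is transient (Corollary \ref{cor:RW}(b)) it converges to a random end, and the preceding observation shows that end cannot lie in $\partial^*\T$; hence $W_{\tau(n)}\to\varpi$ almost surely. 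The same interpolation, together with $\dist_{\T}(W_t,W_{\tau(\nb_t)})\le 1$, transfers this to $W_t\to\varpi$ in $\wh\T$. Note that this argument treats the critical case $\rha=1$ uniformly with $\rha<1$.

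When $\rha>1$ the same recursion gives $q=1/\rha<1$, and since $b>0$ we have $\hor(W_{\tau(n)})\to+\infty$; transience then forces the limiting end $W_\infty$ to lie in $\partial^*\T$, and interpolation again yields $W_t\to W_\infty$. It remains to identify the law $\nu_w^{\partial\T}$ of $W_\infty$. Since $(W_{\tau(n)})$ is a nearest-neighbour walk whose transition probabilities are invariant under $\Aff(\T)$, it is exactly of the type analysed in \cite{CKW}, and its harmonic measure on $\partial^*\T\cong\Q_{\pp}$ is computed there. The cone probabilities $\nu_w^{\partial\T}(\partial_v^*\T)$ factor multiplicatively along $\geo{o\,v}$ by $\Aff(\T)$-invariance, and comparison with $\la^*(\partial_v^*\T)=\pp^{-\hor(v)}$ (Remark \ref{rmk:density-tree}) shows that $\nu_w^{\partial\T}$ is absolutely continuous with respect to $\la^*$, with a strictly positive, continuous and bounded density.

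The step I expect to demand the most care is the critical case $\rha=1$: there $b=0$, so the regular-sequence machinery of Lemma \ref{lem:regT} yields rate $0$ but no information about the limiting end, and convergence to $\varpi$ has to be extracted instead from the exact identity $q=1$ together with transience, as above. The density assertion in the case $\rha>1$ is the other genuinely nontrivial input, and I would rely on the explicit harmonic-measure computation of \cite{CKW} rather than reprove it from scratch.
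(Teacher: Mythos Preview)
Your proof is correct and follows the paper's overall strategy: reduce to the discrete walk $(W_{\tau(n)})$ via regular sequences for the rate of escape, establish boundary convergence for that walk, and interpolate to continuous time using $\nb_t$ and the bound $\dist_{\T}(W_t,W_{\tau(\nb_t)})\le 1$.

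The one genuine point of divergence is the case $\rha\le 1$. The paper treats $\rha<1$ and $\rha=1$ separately: for $\rha<1$ it reads off convergence to $\varpi$ directly from Lemma~\ref{lem:regT} (since $b<0$), while for $\rha=1$ it argues that $(Y_{\tau(n)})$ is simple random walk on $\Z$, hence recurrent, so along a subsequence $\hor(W_{\tau(n')})=0$; combined with the general fact that a transient nearest-neighbour walk on $\T$ converges to a random end \cite[Theorem 9.18]{Wo-markov}, this rules out any limit in $\partial^*\T$. Your route via the first-passage equation $q=\frac{1}{1+\rha}+\frac{\rha}{1+\rha}q^2$ handles both subcases at once and is arguably more elementary; it buys a uniform argument at the cost of recomputing a standard hitting probability. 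Both arguments ultimately hinge on the same obstruction: the walk cannot remain forever in any forward subtree $\T_c$, so the limiting end must be $\varpi$.

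Two small points. First, the paper explicitly extends the law of $W_\infty$ to non-vertex starting points $w\in(v^-,v)$ via the mixture
$\nu_w^{\partial\T}=\Prob_w[W_{\tau(1)}=v]\,\nu_v^{\partial\T}+\Prob_w[W_{\tau(1)}=v^-]\,\nu_{v^-}^{\partial\T}$,
which you leave implicit. Second, for the density statement the paper invokes the explicit formulas in \cite[Proposition 9.23]{Wo-markov} and \cite{Wlamp} rather than \cite{CKW}; either reference suffices.
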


\begin{proof} Consider first the random walk $(W_{\tau(n)})$ on $V(\T)$. 
As $\dist_{\T}(W_{\tau(n+1)}\,,W_{\tau(n)}) = 1$, lemmas \ref{lem:LLN}
and \ref{lem:regT} yield that the sequence $\bigl(W_{\tau(n)}\bigr)$ is almost
surely regular. We obtain that first of all,
$$
\frac{1}{n}\,\dist_{\T}(W_{\tau(n)}\,,W_0) \to \left|\frac{\rha-1}{\rha+1}\right|
\quad\text{almost surely.}
$$ 
The proof now proceeds as the one of Lemma \ref{lem:LLN}: with $\nb_t$ 
as in \eqref{eq:nt}, we have that $W_t$ lies on the edge between
$W_{\tau(\nb_t)}$ and $W_{\tau(\nb_t+1)}$, whence 
$\dist_{\T}(W_t\,,W_{\tau(\nb_t)}) \le 1$. Therefore 
$$
\lim_{t\to \infty}\frac{\dist_{\T}(W_t\,,W_0)}{t} 
= \lim_{t\to \infty}\frac{\dist_{\T}(W_{\tau(\nb_t)}\,,W_0)}{\nb_t}
\,\frac{\nb_t}{\tau(\nb_t)}
\,\frac{\tau(\nb_t)}{t}
= \left|\frac{\rha-1}{\rha+1}\right| \frac{1}{\Ex(\tau)}\,,
$$
as proposed. 

Second, again by Lemma \ref{lem:regT}, $(W_{\tau(n)})$ converges 
a.s. to $\varpi$, when $\rha < 1$.

When $\rha>1$, it converges a.s. to a $\partial^*\T$-valued random variable 
$W_{\infty}$. Using the formulas 
that are displayed in \cite[Proposition 9.23]{Wo-markov}, one can compute  the limit 
distribution $\nu_v^{\partial \T}$ of that random walk, when $W_0 = v \in V(\T)$.
Explicit computations can be found in \cite{Wlamp}. One sees that 
$\nu_v^{\partial T}$ has the stated properties. In particular, it carries no 
point mass and is supported by the whole of $\partial^*\T$
(or equivalently, $\partial \T$).

If we replace the starting point $v \in V(\T)$ by a starting point $w$ that lies
in the interior of some edge $[v^-,v]$ then the process starting at $w$ also
must converge to $\partial^*\T$, and we have
$$
\nu_w^{\partial \T} = \Prob[W_{\tau(1)} = v \mid W_0=w]\, \nu_v^{\partial \T} 
+ \Prob[W_{\tau(1)} = v^- \mid W_0=w]\, \nu_{v^-}^{\partial \T}\,.
$$ 
We still have to show that $W_t \to \varpi$ almost surely, when 
$\rha=1$. This is obtained by the following simple argument. Being a transient 
nearest neigbour random walk, $(W_{\tau(n)})$ must converge almost surely
to some random end of $\T$, see \cite[Theorem 9.18]{Wo-markov}. 
But the projection $\hor(W_{\tau(n)}) = Y_{\tau(n)}$ is a recurrent random walk
on $\Z$, when $\rha =1$. Thus, there is a random subsequence $(n')$ along which
$\hor(W_{\tau(n')}) = 0$. This subsequence must converge to $\varpi$,
whence $\varpi$ is the limit of the entire sequence.
\end{proof}

In fact, the last proposition provides the simplest class of cases to
which the results of \cite{CKW} apply (but explaining how to apply those
general results would consume more space and energy than the above direct
arguments.) 
We next want to present the analogous proposition concerning the process $(Z_t)$
on $\Hb$. Recall that we can interpret the random walk $(Z_{\tau(n)})$ on $\LH$ 
as a right random walk on the group $\Aff(\Hb,\qq)$ which is identified with $\LH$
via \eqref{eq:identif}. With this identification, the law of that random walk 
is the probability $\wt\mu$ of \eqref{eq:wtmu}. We know that in the notation of
the group operation, the increments 
$Z_{\tau(n-1)}^{-1}Z_{\tau(n)}\,$, $n \ge 2$, are i.i.d.  
$\Aff(\Hb,\qq)$-valued random variables with common distribution $\wt\mu$, 
so that they can be written
as random affine transformations $\bigl(\begin{smallmatrix}A_n & B_n \\ 0 & 1
                      \end{smallmatrix}\bigr)$, where 
$A_n = \qq^{Y_{\tau(n)}-Y_{\tau(n-1)}}$; the associated transformation
of $\Hb$ is $z \mapsto A_nz + B_n\,$.
While $A_n$ only takes the two values $\qq$ and 
$1/\qq$, the common distribution of the real random variables $B_n$ has
a continuous density with respect to Lebesgue measure by Proposition \ref{pro:dens}. 
By Corollary \ref{cor:mumom}, $B_n$ satisfies an 
exponential moment condition.

\begin{pro}\label{pro:Ztaun}\hspace{15pt}
$\displaystyle
\lim_{t \to \infty} \frac{1}{t}\,\dist_{\Hb}(Z_t\,,Z_0) = 
|\ell(\al,\beta)|\quad\text{almost surely.}
$\\[5pt]
\noindent
If $\ell(\al,\beta) \ge 0$ ($\!\!\iff \rha \ge 1$) then for any starting
point $z \in \Hb$, we have almost surely that
$$
\lim_{t \to \infty} Z_t = \binfty \quad \text{almost surely in the topology of}\; \;
\wh \Hb\,.
$$
If $\ell(\al,\beta) < 0$ ($\!\!\iff \rha < 1$) then there is a 
random variable $Z_{\infty}$ taking values in $\partial^*\Hb = \R$
such that for any starting point $z \in \Hb$, we have almost surely that
$$
\lim_{t \to \infty} Z_t = Z_{\infty} \quad \text{in the topology of}\; \;
\wh \Hb\,.
$$
In this case, let $\nu_z^{\partial \Hb}$ be the distribution of $Z_{\infty}\,$,
given that $Z_0 = z$. This is a probability measure on $\partial^*\Hb \equiv \R$
that has a continuous, strictly positive density with respect to Lebesgue 
measure.
\end{pro}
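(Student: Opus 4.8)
The plan is to follow the strategy already used for the tree in Proposition \ref{pro:Wtaun}, transferring it from $\T$ to $\Hb$: first analyse the embedded random walk $(Z_{\tau(n)})$ on $\Aff(\Hb,\qq)$ via regular sequences, and then pass to continuous time by interpolation. First I would show that $(Z_{\tau(n)})$ is almost surely regular in the sense of Definition \ref{dfn:regular}, using the criterion of Lemma \ref{lem:regH}. Since $\log \IM Z_{\tau(n)} = (\log\qq)\,Y_{\tau(n)}$ and $Y_{\tau(n)}/n \to \frac{\rha-1}{\rha+1}$ almost surely by Corollary \ref{cor:RW} and the law of large numbers, we obtain $\log\IM Z_{\tau(n)}/n \to b:=\frac{\rha-1}{\rha+1}\log\qq$. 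Moreover $\dist_{\Hb}(Z_{\tau(n+1)},Z_{\tau(n)}) \le D_n$, and Corollary \ref{cor:inter} gives $D_n/n \to 0$ almost surely, so $\dist_{\Hb}(Z_{\tau(n+1)},Z_{\tau(n)})/n \to 0$. Lemma \ref{lem:regH} then yields that $(Z_{\tau(n)})$ is regular with rate $r=|b|$ and $\dist_{\Hb}(Z_{\tau(n)},Z_0)/n \to r$.

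For the rate of escape I would interpolate exactly as in Lemma \ref{lem:LLN} and Proposition \ref{pro:Wtaun}: with $\nb_t$ as in \eqref{eq:nt} one has $\dist_{\Hb}(Z_t,Z_{\tau(\nb_t)}) \le D_{\nb_t}$, and since $D_{\nb_t}/\nb_t \to 0$ while $\nb_t/\tau(\nb_t) \to 1/\Ex(\tau)$ and $\tau(\nb_t)/t \to 1$, it follows that $\dist_{\Hb}(Z_t,Z_0)/t \to r/\Ex(\tau) = |\ell(\al,\beta)|$. The convergence statements in the two non-critical cases then follow from the second half of Lemma \ref{lem:regH}. If $\rha>1$ then $b>0$, so $Z_{\tau(n)} \to \binfty$; here it is even simpler to note directly that $Y_t \to +\infty$ forces $\IM Z_t \to \infty$, whence $|Z_t|\to\infty$ and $Z_t\to\binfty$ for the whole process. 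If $\rha<1$ then $b<0$ and Lemma \ref{lem:regH} gives $Z_{\tau(n)} \to Z_\infty$ for some $Z_\infty \in \partial^*\Hb=\R$; to pass to continuous time I would observe that $\IM Z_t = \qq^{Y_t}\to 0$, while the Euclidean oscillation of $\RE Z_t$ on $[\tau(n),\tau(n+1)]$ is at most of order $\qq^{Y_{\tau(n)}}$ times a factor with exponential moments (Proposition \ref{pro:sideways} rescaled by group invariance, together with Corollary \ref{cor:inter}); as $\qq^{Y_{\tau(n)}}$ decays geometrically these oscillations are summable and $\RE Z_t$ converges, so $Z_t\to Z_\infty$.

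For the density of $\nu_z^{\partial\Hb}$ I would use the perpetuity representation $Z_\infty = \sum_{k\ge 1}\qq^{Y_{\tau(k-1)}}B_k$ coming from the identifications \eqref{eq:identif} and \eqref{eq:wtmu}, which satisfies the stationary identity $Z_\infty = B_1 + A_1 Z_\infty'$ with $Z_\infty'$ an independent copy of $Z_\infty$. Since the real part $B_1$ of the first increment has a continuous, strictly positive density on $\R$ by Proposition \ref{pro:dens} (the real parts of $\Lf_{1}$ and $\Lf_{-1}$ fill all of $\R$) and has an exponential moment by Corollary \ref{cor:mumom}, conditioning on $(A_1,Z_\infty')$ and averaging exhibits the law of $Z_\infty$ as a convolution of a continuous strictly positive density with a probability measure, hence continuous and strictly positive. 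Alternatively one may invoke the results of {\sc Grincevicius} \cite{Gr1}, \cite{Gr2} directly.

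The main obstacle is the critical case $\rha=1$ ($b=0$), where Lemma \ref{lem:regH} gives no information, since the height $Y_{\tau(n)}$ is then a recurrent (zero-drift) walk on $\Z$ and the rate of escape is $0$. Here I would first use transience of $(Z_{\tau(n)})$ on the non-unimodular group $\Aff(\Hb,\qq)$ (Corollary \ref{cor:transience-Ztaun}), together with the excursion bound of Corollary \ref{cor:inter}, to rule out accumulation of $(Z_t)$ in the interior of $\Hb$. It then remains to exclude accumulation at a finite point $\zeta\in\partial^*\Hb$, that is, to show $|Z_t|\to\infty$ even though $Y_t$ returns to $0$ infinitely often. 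The key geometric point is that during a deep downward excursion of $Y_t$ the increments of $\RE Z_t$ are scaled by $\qq^{Y_t}$ and hence negligible, so the real part is essentially frozen at the (continuously distributed, never stabilising) value it had at the top of the excursion; this prevents $\RE Z_t$ from converging to any fixed $\zeta$ along low-height times, forcing $Z_t\to\binfty$. Making this rigorous is the delicate part, and I expect it to rest on the fine description of a centred random walk on the affine group escaping to infinity, for which the methods of {\sc Grincevicius} \cite{Gr1}, \cite{Gr2} are the natural tool; it also dovetails with the already established fact (Proposition \ref{pro:Wtaun}) that $W_t\to\varpi$ when $\rha=1$.
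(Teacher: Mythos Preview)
Your treatment of the non-critical cases ($\rha\ne 1$) and of the density of $\nu_z^{\partial\Hb}$ is essentially the paper's argument: regularity of $(Z_{\tau(n)})$ via Lemma~\ref{lem:regH} plus Corollary~\ref{cor:inter}, interpolation through $\nb_t$, and the perpetuity representation $Z_\infty=\sum_k A_1\cdots A_{k-1}B_k$ combined with Proposition~\ref{pro:dens}. Your continuous-time argument for $\rha<1$ (summability of the rescaled sideways oscillations) is in fact more explicit than what the paper writes down.

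The genuine gap is the critical case $\rha=1$. Two points:
\begin{enumerate}
\item For the discrete skeleton, the relevant input is not Grincevicius but {\sc Brofferio}~\cite{Br}, who proves precisely that a centred (zero-drift) random walk on the affine group converges to $\binfty$ almost surely. Your heuristic about the real part being ``frozen'' during deep excursions is the right intuition, but it does not by itself give a proof; \cite{Br} is what makes it rigorous for $(Z_{\tau(n)})$.
\item Even granting $Z_{\tau(n)}\to\binfty$, the passage to continuous time is \emph{not} a routine interpolation here, because the rate of escape is zero and Corollary~\ref{cor:inter} only controls $D_n/\log\log n$. One must show that the excursions of $(Z_t)$ between $\tau(n)$ and $\tau(n+1)$ do not drag the path back into a fixed compact-like set $g^{-1}V_r$ infinitely often. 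The paper does this by a Borel--Cantelli argument: it bounds $\Prob_{\im}[Z_{\tau(n)}\notin g^{-1}V_r,\ Z_t\in g^{-1}V_r\text{ for some }t\in(\tau(n),\tau(n+1))]$ and sums over $n$. The sum is rewritten as $\int_{\LH} f_r(gz)\,d\mathcal U(z)$ for the potential kernel $\mathcal U=\sum_n\wt\mu^{(n)}$, where $f_r$ is controlled via the exponential decay of Proposition~\ref{pro:sideways}. Finiteness of this integral for a.e.\ $g$ then follows from integrability of $f_r$ against \emph{right} Haar measure together with \cite[Lemma~1]{Br}. This step---transience plus a quantitative bound on re-entry probabilities fed into a potential-theoretic lemma---is the substance you are missing, and your proposal does not indicate how you would replace it.
\end{enumerate}
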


\begin{proof} 
By Corollary \ref{cor:inter},
$$
\frac{1}{n}\dist_{\Hb}(Z_{\tau(n+1)}\,,Z_{\tau(n)})/n \to 0 
\quad\text{almost surely.} 
$$
By \ref{lem:LLN}, 
\begin{equation}\label{eq:rate}
\frac{1}{n}\log \IM(Z_{\tau(n)})= \frac{\log \qq}{n}\,Y_{\tau(n)} 
\to \log \qq \,\frac{\rha-1}{\rha + 1}\quad\text{almost surely.} 
\end{equation}
Thus, by Lemma \ref{lem:regH}, the sequence $(Z_{\tau(n)})$ is 
almost surely regular in $\Hb$, with
rate $\log \qq \big|\frac{\rha-1}{\rha + 1}\big|$. When $\rha > 1$ it converges
to $\binfty$ in the topology of $\wh\Hb$, while when $\rha < 1$, it converges
in that topology to a random element of $\partial^*\Hb$. 

\smallskip

The more difficult situation is the one where the rate of the sequence is $0$.
In that case, it was proved by 
{\sc Brofferio}~\cite{Br} that $Z_{\tau(n)} \to \binfty$ almost surely in the
topology of $\wh \Hb$. This is not yet enough to guarantee that also
$Z_t \to \binfty$ almost surely. We take inspiration from \cite{Br}. 
For any $g \in \Aff(\Hb,\qq)$, a neighbourhood base of $\binfty$ in $\wh \Hb$
is given by the collection of all sets $\wh \Hb \setminus g^{-1}V_r\,$,
where 
$$
V_r = \{ z = x+\im y : |x| \le r \;\text{and}\; 0 \le y \le \qq^r\}\,,
\quad r \in \N\,.
$$
Our argument will not depend on the starting point, but only on what
happens from time $\tau(1)$ onwards. Thus, we may assume that $Z_0 \in \LH$, 
which can be identified with $\Aff(\Hb,\qq)$.  
We know from \cite{Br} that for any $r$ and for any starting point in $\LH$, we have almost surely that 
$Z_{\tau(n)} \in \Hb \setminus V_r$ for all but finitely many $n$.
Equivalently, for starting point $\im$ and for some $g \in \Aff(\Hb,\qq)$, for any $r$ 
we have $Z_{\tau(n)} \in \Hb \setminus g^{-1}V_r$ for all but finitely many $n$ .

Thus, we need an element $g \in \Aff(\Hb,\qq)$ such that with probability $1$, in between the times 
$\tau(n)$ and $\tau(n+1)$, the process $(X_t)$ 
does not enter into $g^{-1}V_r\,$, if $n$ is sufficiently
large. This will follow from the Borel-Cantelli Lemma after showing that
\begin{equation}\label{eq:BorCant}
\sum_{n=1}^{\infty}\, \Prob_{\im}\!\left[{ Z_{\tau(n)}\in\Hb \setminus g^{-1}V_r\;,\;\,
Z_t \in g^{-1}V_r \; \atop \text{for some $t$ with}\;
\tau(n) < t < \tau(n+1)}\right] \,< \infty\,.
\end{equation}
Again, we use the identification \eqref{eq:identif} of $\LH$ with 
$\Aff(\Hb,\qq)$ and consider the potential measure 
$\mathcal U = \sum_{n=0}^{\infty} \wt \mu^{(n)}$, 
where $\wt \mu^{(n)}$ is the $n$th convolution power of the measure 
$\wt\mu$ on $\Aff(\Hb,\qq)$. By transience of $(Z_{\tau(n)})\,$, this
$\mathcal U$ is a Radon measure on $\Aff(\Hb,\qq) \equiv \LH$. For $z \in \LH$, let 
$$
f_r(z) = \uno_{\LH \setminus V_r}(z)\,
\Prob_z[  Z_t \in V_r \; \text{for some $t$ with}\; 0 < t < \tau(1) ].
$$
Then for any $g \in \Aff(\Hb,\qq)$,
$$
\sum_{n=1}^{\infty}\, \Prob_{\im}\!\left[{ Z_{\tau(n)}\in\Hb \setminus g^{-1}V_r\;,\;\,
Z_t \in g^{-1}V_r \; \atop \text{for some $t$ with}\;
\tau(n) < t < \tau(n+1)}\right] \,
= \int_{\LH} f_r(gz) \, d\,\mathcal U(z)\,.
$$ 
Let $z = b + \im \qq^m \in \LH \setminus V_r\,$, with $m \in \Z$ and $b\in \R$. 
Write $z=g_z\im$, where 
$g_z=\bigl(\begin{smallmatrix} \pp^m & b \\ 0 & 1 \end{smallmatrix}
\bigr)\in \Aff(\Hb,\qq)$. Then
$$
f_r(z) = \uno_{\LH \setminus g_z^{-1}V_r}(\im)\,
\Prob_{\im}[  Z_t \in g_z^{-1}V_r \; \text{for some $t$ with}\; 0 < t < \tau(1) ].
$$
We have 
$$
g_z^{-1}V_r = \{ x + \im y : |x+\qq^{-m}b| \le \qq^{-m} r \;
\text{and}\; 0 \le y \le \qq^{r-m}\}.
$$
We must have $\im \in \LH \setminus g_z^{-1}V_r\,$. Starting at $\im$, 
the process $(Z_t)$ does not leave $\Sf_0 \cup \Sf_1$
before time $\tau(1)$. Compare with Figure~6. Thus, in order to be able to 
enter into $g_z^{-1}V_r$ before
that time, we must have $r-m \ge 0$; otherwise $f_r(z)=0$. 

Suppose that we do 
have $r-m \ge 0$, and that $\im$ stays to the left of $g_z^{-1}V_r$, so that 
$-r-b > 0$. 
Then in order to enter into $g_z^{-1}V_r$ before $\tau(1)$, the process must cross 
the vertical line where $x = -\qq^{-m}(r+b)$. Setting 
$k= \lfloor -\qq^{-m}(r+b)\rfloor$ (next lower integer), this means that 
$Z_t$ must pass through the segment $J_k$ of Figure~6. By Proposition 
\ref{pro:sideways}, resp. \eqref{eq:occur} in its proof, $f_r(z) \le \rho^k$. 
Analogously, if  $\im$ stays to the right of $g_z^{-1}V_r\,$, which means that 
$r-b < 0$, then $f_r(z) \le \rho^k$, where $k = \lfloor \qq^{-m}(b-r)\rfloor$.
Setting $\la =-\log \rho$, we find that
$$
f_r(b + \im \qq^m) \begin{cases}= 0\,,&\quad\text{if}\; m > r \;\text{or}\; |b| < r\\
\le \exp\Bigl(-\la\bigl(\qq^{-m}(|b|-r)+1\bigr)\Bigr) 
&\quad\text{if}\; m \le r \;\text{and}\; |b| \ge r.
\end{cases}
$$
The \emph{right} Haar measure on $\Aff(\Hb,\qq)  \equiv \LH$ is  one-dimensional
Lebesgue measure on each of the lines $\Lf_k\,$, compare with
\eqref{eq:modularAffRq}. Thus, the integral
of $f_r$ with respect to right Haar measure is 
$$
\sum_{m \le r} \int_{|b| \ge r} f_r(b + \im \qq^m)\,db < \infty\,.
$$
\cite[Lemma 1]{Br} yields that in this case, 
$\int_{\LH} f_r(gz) \, d\,\mathcal U(z) < \infty$
for $dg$-almost all $g \in \Aff(\Hb,\qq)$. This is true for all $r \in \N$.
Thus, there is some fixed $g \in \Aff(\Hb,\qq)$ such that the last integral
is finite for every $r\in \N$. For this $g$, 
\eqref{eq:BorCant} holds for every $r\in \N\,$, so that $Z_t \to \binfty$
almost surely.
 
\smallskip

We finally have to explain that in the case $\rha < 1$, the limit random variable on
$\partial^*\Hb$ has a distribution with continuous, positive density
with respect to Lebesgue measure.

Let us write $Z_{\tau(1)} 
= \bigl(\begin{smallmatrix} A_1 & B_1 \\ 0 & 1 \end{smallmatrix}\bigr)$, 
which is independent of the other
$\bigl(\begin{smallmatrix} A_n & B_n \\ 0 & 1 \end{smallmatrix}\bigr)$ but 
does in general not have the same distribution.
We know from Proposition \ref{pro:dens} that for arbitrary starting point $z \in \Hb$, 
the distribution of $B_1$ has a continuous density with respect to Lebesgue measure.
We then have
$$
Z_{\tau(n)} = 
\begin{pmatrix} A_1 \cdots A_n &\; \sum_{k=1}^n A_1 \cdots A_{k-1}B_k \\ 0 & 1 
\end{pmatrix}.$$
When $\rha < 1$, it is very well known and quite easy
to verify that in $\R$, the upper right matrix element 
of $Z_{\tau(n)}$ converges almost surely to 
$$
Z_{\infty} = \sum_{k=1}^{\infty} A_1 \cdots A_{k-1}B_k\,,
$$
as $n \to \infty\,$. Recalling the identification \eqref{eq:identif}, we see that
this is the limit of $Z_{\tau(n)}$ in $\wh \Hb$, since 
$A_1 \cdots A_n \to 0$ almost surely. 
It is now easy to verify  that along with all the $B_n$ 
(including $B_1$), for arbitrary starting point also the distribution of
$Z_{\infty}$ has a continuous density with respect to Lebesgue measure on $\R$.
\end{proof}

\begin{rmk}\label{rmk:gap} Our result on almost sure convergence of $(Z_t)$
to $\binfty$ in the critical case $\rha=1$ also applies to Brownian motion
with vertical drift on $\Hb$ without any bifurcation lines. Indeed, this corresponds
just to the case when $\beta\pp =1$.  This closes a small gap left open in the 
proof of \cite[Proposition 4.2.(iii)]{BSW}, concerning the passage
from discrete to continuous time.
\end{rmk}

\begin{proof}[\bf Proof of theorems \ref{thm:escape} and \ref{thm:Brownianlimit}]
Theorem \ref{thm:escape} regarding the rate of escape of $(X_t)$ now follows by
combining the inequalities of Proposition \ref{pro:metric} with the 
rates of escape of $(Y_t)$, $(W_t)$ and $(Z_t)$, as provided by Lemma 
\ref{lem:LLN} and propositions \ref{pro:Wtaun} and \ref{pro:Ztaun}, respectively.

\smallskip

Theorem \ref{thm:Brownianlimit} follows by combining those two propositions 
with the description \eqref{imp:convergence}
of convergence to the boundary in treebolic space.
\end{proof}
Theorem \ref{thm:Brownianlimit} provides the following, which (as mentioned)
was only indicated in \cite{BSSW}. 

\begin{cor}\label{cor:transience}
The processes $(X_t)$ on $\HT$, $(Z_t)$ on $\Hb$ and $(W_t)$ on $\T_{\pp}$ 
($\pp \ge 2$), as defined in Proposition \ref{pro:projections}, are transient.
\end{cor}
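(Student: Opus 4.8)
The plan is to read off transience directly from the almost sure boundary convergence already established, with no new computation. The guiding principle is that a process with continuous paths whose trajectory converges almost surely to a point of the geometric boundary at infinity must leave every compact set at some finite time and never return, which is precisely transience.

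First I would record the pathwise characterisation to be used: a Markov process with continuous paths on a locally compact space is transient provided that, for every compact set $K$, the last exit time $L_K = \sup\{ t \ge 0 : X_t \in K\}$ (with $\sup\emptyset = 0$) is almost surely finite, equivalently the total occupation time of $K$ is almost surely finite. It then suffices to verify this for each of the three processes.

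For $(X_t)$ I would argue as follows. By Theorem \ref{thm:Brownianlimit}, $X_t \to X_{\infty}$ almost surely in the topology of $\wh\HT$, with $X_{\infty} \in \bd\HT = \wh\HT \setminus \HT$. Fix a compact set $K \subset \HT$. Being the closure of $\HT$ in the compact Hausdorff space $\wh\Hb \times \wh\T$, the space $\wh\HT$ is itself compact Hausdorff; since $X_{\infty}$ lies in $\bd\HT$ and hence outside the compact set $K$, the Hausdorff property yields an open neighbourhood $U$ of $X_{\infty}$ in $\wh\HT$ with $U \cap K = \emptyset$. Almost sure convergence then provides a finite (random) time $T$ with $X_t \in U$, and therefore $X_t \notin K$, for all $t \ge T$. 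Thus $L_K \le T < \infty$ almost surely, and $(X_t)$ is transient. For $(Z_t)$ and $(W_t)$ the same scheme applies verbatim, using Proposition \ref{pro:Ztaun} and Proposition \ref{pro:Wtaun} in place of Theorem \ref{thm:Brownianlimit}: in each case the stated limit — namely $\binfty$ or a point of $\partial^*\Hb$ for $(Z_t)$, and $\varpi$ or a point of $\partial^*\T$ for $(W_t)$ — lies on the boundary at infinity of $\wh\Hb$, resp. $\wh\T$, and is therefore separated by a disjoint open neighbourhood from any prescribed compact subset of $\Hb$, resp. $\T$, giving almost surely finite last exit times.

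The only point deserving attention — and it is not really an obstacle — is that one genuinely needs convergence to a single boundary point, not merely $\dist_{\HT}(X_t,X_0) \to \infty$: it is convergence that guarantees the trajectory stays inside the separating neighbourhood $U$ once it has entered, thereby excluding infinitely many returns to $K$. Since the cited results deliver honest almost sure convergence of the continuous-time processes $X_t$, $Z_t$ and $W_t$ to points of the respective boundaries, this is automatic and no further estimate is required.
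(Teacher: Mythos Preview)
Your argument is correct and is precisely the reasoning the paper has in mind: the corollary is stated immediately after Theorem~\ref{thm:Brownianlimit} with the remark that the latter ``provides'' it, and you have simply spelled out why almost sure convergence to a boundary point forces the last exit time from any compact set to be finite.

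One small correction to your closing remark: it is not true that one ``genuinely needs convergence to a single boundary point''. If one merely knew $\dist_{\HT}(X_t,X_0)\to\infty$ almost surely, that would already suffice, since any compact $K\subset\HT$ is bounded and hence eventually avoided. What \emph{is} true is that in the drift-free case $\ell(\al,\beta)=0$, Theorem~\ref{thm:escape} alone (rate of escape $=0$) does not yield $\dist_{\HT}(X_t,X_0)\to\infty$, so one does need the convergence statement of Theorem~\ref{thm:Brownianlimit} rather than just the rate of escape; but the relevant consequence of boundary convergence is only that every accumulation point of the trajectory in $\wh\HT$ lies in $\bd\HT$, not that there is a unique such point.
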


\section{Central limit theorem}\label{sec:CLT}

The proof of a CLT for $\dist(X_t,X_0)$ ($t \to \infty$) depends
significantly on the sign of the drift $\ell(\al,\beta)$.
It will follow from the CLT for the random walk $(X_{\tau(n)})$.
Here we shall work with $\dist(X_t,\of)$ instead of $\dist(X_t,X_0)$,
which makes no difference, as we divide by $\sqrt t$. In any case, before
that we need the CLT for the vertical component $Y_t$ of $X_t\,$.

\begin{lem}\label{lem:CLTY} With $\Var(Y)$ and $\Var(\tau)$
as in Proposition \ref{pro:tau}, set
$$
\sigma^2 = \sigma^2(\al,\beta) =
\frac{1}{\Ex(\tau)} \Var(Y)
+ \frac{\ell(\al,\beta)^2}{\Ex(\tau)\log^2 \qq}\Var(\tau).
$$
Then
$$
\frac{1}{\sqrt t}\Bigl(Y_t - t\, \frac{\ell(\al,\beta)}{\log\qq}\Bigr) \to N(0,\sigma^2)
\quad \text{in law, as}\; t \to \infty\,.
$$
\end{lem}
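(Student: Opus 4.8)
The plan is to derive the statement from the classical central limit theorem for the embedded renewal--reward structure and then transfer it from the embedded discrete times $\tau(n)$ to continuous time $t$. First I would record the i.i.d.\ structure underlying $Y_{\tau(n)}$. By Corollary \ref{cor:RW}(a) the increments $\xi_k = Y_{\tau(k)}-Y_{\tau(k-1)}$ are i.i.d.\ $\{\pm1\}$-valued with mean $\Ex(Y)=\frac{\rha-1}{\rha+1}$, while by Proposition \ref{pro:tau}(a),(b) the increments $\tau_k=\tau(k)-\tau(k-1)$ are i.i.d.\ (for $k\ge 2$, and also for $k=1$ when we start on $\LT$) with mean $\Ex(\tau)$ and finite variance $\Var(\tau)$; by Proposition \ref{pro:tau}(c), $\xi_k$ and $\tau_k$ are independent for each fixed $k$, and the strong Markov property together with group invariance \eqref{eq:groupinvariance} makes the \emph{pairs} $(\tau_k,\xi_k)$ into a genuinely i.i.d.\ sequence. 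A single differently distributed first term and the a.s.\ finite offset $Y_0$ are $O_P(1)$ and hence vanish after dividing by $\sqrt t$, so I may work with the clean i.i.d.\ sequence. The observation that links the two fluctuations is $\ell(\al,\beta)/\log\qq=\Ex(Y)/\Ex(\tau)$ (immediate from Theorem \ref{thm:escape} and Proposition \ref{pro:tau}(d)); I therefore set the centred reward
$$
\zeta_k = \xi_k - \frac{\Ex(Y)}{\Ex(\tau)}\,\tau_k,
$$
an i.i.d.\ mean-zero sequence whose variance, using independence of $\xi_k$ and $\tau_k$, is
$$
v = \Var(Y) + \frac{\Ex(Y)^2}{\Ex(\tau)^2}\,\Var(\tau),
$$
and a direct check gives $v/\Ex(\tau)=\sigma^2(\al,\beta)$.

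Second, writing $S_n=\sum_{k=1}^n\zeta_k$, the ordinary CLT yields $S_n/\sqrt n \tolaw N(0,v)$. Since $\nb_t/t\to 1/\Ex(\tau)$ almost surely --- a consequence of $\tau(n)/n\to\Ex(\tau)$ (already used in Lemma \ref{lem:LLN}) and the sandwich $\tau(\nb_t)\le t<\tau(\nb_t+1)$ from \eqref{eq:nt} --- Anscombe's theorem on randomly indexed sums (whose continuity-in-probability hypothesis is automatic for i.i.d.\ summands) gives $S_{\nb_t}/\sqrt{\nb_t}\tolaw N(0,v)$. As $\sqrt{\nb_t/t}\to 1/\sqrt{\Ex(\tau)}$ almost surely, Slutsky's lemma then yields
$$
\frac{1}{\sqrt t}\,S_{\nb_t} \tolaw N\bigl(0,\,v/\Ex(\tau)\bigr) = N(0,\sigma^2).
$$

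Third, I would show that $\frac{1}{\sqrt t}\bigl(Y_t - t\,\ell(\al,\beta)/\log\qq\bigr)$ has the same limit as $\frac{1}{\sqrt t}S_{\nb_t}$. Writing $S_{\nb_t}=(Y_{\tau(\nb_t)}-Y_0)-\frac{\Ex(Y)}{\Ex(\tau)}\tau(\nb_t)$, the difference of the two quantities equals
$$
\frac{1}{\sqrt t}\Bigl[(Y_t-Y_{\tau(\nb_t)}) + Y_0 + \tfrac{\Ex(Y)}{\Ex(\tau)}\bigl(\tau(\nb_t)-t\bigr)\Bigr].
$$
Here $|Y_t-Y_{\tau(\nb_t)}|\le 1$ (as $Y_t$ lies between $Y_{\tau(\nb_t)}$ and $Y_{\tau(\nb_t+1)}$, which differ by $1$) and $Y_0$ is constant, so both contribute $O(1/\sqrt t)\to 0$; the remaining \emph{overshoot} term is controlled by $0\le t-\tau(\nb_t)\le \tau_{\nb_t+1}$.

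The main obstacle is precisely this overshoot term: I must show $\tau_{\nb_t+1}=o_P(\sqrt t)$, and this is where the finite second (indeed exponential, Proposition \ref{pro:tau}(e)) moment of $\tau$ is needed. From $\Ex(\tau^2)<\infty$, dominated convergence gives $n\,\Prob[\tau>\ep\sqrt n]\to 0$ for every $\ep>0$, whence $\Prob[\max_{k\le n}\tau_k>\ep\sqrt n]\le n\,\Prob[\tau>\ep\sqrt n]\to 0$, i.e.\ $\max_{k\le n}\tau_k=o_P(\sqrt n)$; restricting to the high-probability event $\{\nb_t+1\le 2t/\Ex(\tau)\}$ then gives $\tau_{\nb_t+1}=o_P(\sqrt t)$. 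With the overshoot disposed of, Slutsky's lemma concludes the argument. Everything apart from this overshoot estimate and the bookkeeping that identifies $v/\Ex(\tau)$ with $\sigma^2(\al,\beta)$ is routine.
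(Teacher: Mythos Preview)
Your proof is correct and follows essentially the same route as the paper. Both arguments rest on the i.i.d.\ structure of the pairs $(\xi_k,\tau_k)$, the independence of $\xi_k$ and $\tau_k$, a randomly indexed CLT via $\nb_t$, and control of the overshoot $t-\tau(\nb_t)$. The only difference is cosmetic packaging: the paper applies the two-dimensional CLT to $\bigl(Y_{\tau(n)}-n\Ex(Y),\,\tau(n)-n\Ex(\tau)\bigr)$ and then forms the linear combination afterwards in a four-term decomposition, whereas you form the centred reward $\zeta_k=\xi_k-\tfrac{\Ex(Y)}{\Ex(\tau)}\tau_k$ up front and apply the one-dimensional CLT; the two linear combinations coincide, so the resulting variance computation is identical. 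Your explicit invocation of Anscombe's theorem is a point the paper leaves implicit, and your $o_P(\sqrt t)$ treatment of the overshoot is equivalent to the paper's almost-sure statement $\tau_{n+1}/\sqrt n\to 0$ (which also follows from $\Ex(\tau^2)<\infty$ via Borel--Cantelli).
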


\begin{proof}
The $\R^2$-valued random variables 
$\bigl(Y_{\tau(n)} - Y_{\tau(n-1)}\,,\tau(n) - \tau(n-1)\bigr)_{n\ge 2}$ 
are i.i.d., see Proposition \ref{pro:tau}. By the two-dimensional CLT,
\begin{equation}\label{eq:CLT2}
\frac{1}{\sqrt{n}}\Bigl(Y_{\tau(n)} - n\, \frac{\rha-1}{\rha+1}\,,\,
\tau(n) - n \Ex(\tau)\Bigr) \to N(0,\Sigma^2) \quad\text{in law,}
\end{equation}
where $\Ex(\tau)$ is as in Proposition \ref{pro:tau}(e) and $N(0,\Sigma^2)$
is the two-dimensional normal distribution with mean vector $0$ and 
$\Sigma^2$ is the covariance matrix of 
$\bigl(Y_{\tau(2)} - Y_{\tau(1)}\,,\tau(2) - \tau(1)\bigr)$, which is just the
diagonal matrix with diagonal entries $\Var(Y)$ and  $\Var(\tau)$.

As in the proof of Lemma \ref{lem:LLN}, with the $\nb_t$ of \eqref{eq:nt},
we know that 
\begin{equation}\label{eq:nnt:t}
\frac{\nb_t}{t} = \frac{\nb_t}{\tau(\nb_t)}\frac{\tau(\nb_t)}{t}  \to
\frac{1}{\Ex(\tau)} \quad \text{almost surely, as}\; t \to \infty\,,
\end{equation}
and that $\bigl|Y_t - Y_{\tau(\nb_t)}\bigr| < 1$.
Now we decompose
$$
\begin{aligned}
\frac{Y_t - t\, \frac{\ell(\al,\beta)}{\log\qq}}{\sqrt t}
& = \frac{Y_t - Y_{\tau(\nb_t)}}{\sqrt t}
+ \sqrt{\frac{\nb_t}{t}}\cdot
\frac{Y_{\tau(\nb_t)} - \nb_t\, \frac{\rha-1}{\rha+1}}{\sqrt \nb_t}\\
&\quad - \frac{\ell(\al,\beta)}{\log \qq}\cdot\frac{t-\tau(\nb_t)}{\sqrt t}
- \sqrt{\frac{\nb_t}{t}}\frac{\ell(\al,\beta)}{\log \qq}\cdot
\frac{\tau(\nb_t) - \nb_t \Ex(\tau)}{\sqrt{\nb_t}}\,.
\end{aligned}
$$
The first term of the sum on the right hand side
tends to $0$ because $0 \le Y_t - Y_{\tau(\nb_t)} < 1$ almost surely.
The third term  tends to $0$ almost surely, because 
$$
\frac{t-\tau(\nb_t)}{\sqrt t} \le \frac{\tau(\nb_t+1)-\tau(\nb_t)}{\sqrt{\nb_t}}
\sqrt{\frac{\nb_t}{t}},
$$
and $\bigl(\tau(n+1)-\tau(n)\bigr)/\sqrt n \to 0$  
by Proposition \ref{pro:tau}(d).
Also, we know that $\nb_t/t \to 1/\Ex(\tau)$ almost
surely. Hence, 
$$
\frac{Y_t - t\, \frac{\ell(\al,\beta)}{\log\qq}}{\sqrt t}
\simlaw  \frac{1}{\sqrt{\Ex(\tau)}}\cdot
\frac{Y_{\tau(\nb_t)} - \nb_t\, \frac{\rha-1}{\rha+1}}{\sqrt \nb_t} 
- \frac{\ell(\al,\beta)}{\sqrt{\Ex(\tau)}\log \qq}\cdot
\frac{\tau(\nb_t) - \nb_t \Ex(\tau)}{\sqrt{\nb_t}}
$$
as $t \to \infty$. It follows from  \eqref{eq:CLT2} that this converges in
law to the centred normal distribution with variance
$\sigma^2(\al,\beta)$,
as proposed.
\end{proof}

\begin{lem}\label{lem:reduce}
\emph{(a)} If $\ell(\al,\beta) > 0$ then
$$
\limsup_{t \to \infty} 
\Bigl(\dist_{\HT}(X_t\,,o) - \dist_{\Hb}(Z_t\,,\im)\Bigr) 
< \infty\quad\text{almost surely.}
$$
\emph{(b)} If $\ell(\al,\beta) < 0$ then
$$
\limsup_{t \to \infty} \Bigl(\dist_{\HT}(X_t\,,\of) 
- (\log \qq) \dist_{\T}(W_t,o)\Bigr) < \infty\quad\text{almost surely.}
$$
(The two appearing differences are always non-negative.)
\end{lem}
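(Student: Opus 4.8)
The plan is to reduce both statements to Proposition~\ref{pro:metric} applied with $\zf_1 = X_t = (Z_t, W_t)$ and $\zf_2 = \of = (\im,o)$. I abbreviate $H_t = \dist_{\Hb}(Z_t,\im)$ and $T_t = (\log\qq)\,\dist_{\T}(W_t,o)$, and note that $\hor(o)=0$ while $\hor(W_t)=Y_t=\log_{\qq}\IM Z_t$, so the subtracted term equals $(\log\qq)\,|\hor(W_t)-\hor(o)| = (\log\qq)\,|Y_t| =: V_t$. With this notation Proposition~\ref{pro:metric} reads $0 \le (H_t+T_t-V_t)-\dist_{\HT}(X_t,\of) \le 2\delta$. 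Consequently $\dist_{\HT}(X_t,\of)-H_t$ differs from $T_t-V_t$ by at most $2\delta$, and $\dist_{\HT}(X_t,\of)-T_t$ differs from $H_t-V_t$ by at most $2\delta$. Since \eqref{eq:metric} gives $\dist_{\HT}\ge\dist_{\Hb}(z_1,z_2)$ and, using \eqref{eq:tree-conf} together with the fact that a hyperbolic distance dominates the corresponding difference of $\log\IM$, also $\dist_{\HT}\ge(\log\qq)\,\dist_{\T}(w_1,w_2)$, both displayed differences are non-negative. It therefore suffices to show that $T_t-V_t$ tends to a finite limit when $\ell(\al,\beta)>0$, and that $H_t-V_t$ stays bounded when $\ell(\al,\beta)<0$.

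To handle (a), I would assume $\ell(\al,\beta)>0$. By Lemma~\ref{lem:LLN}, $Y_t\to+\infty$ almost surely, so for all large $t$ one has $|Y_t|=Y_t=\hor(W_t)$. Feeding $v=W_t$, $w=o$ into \eqref{eq:tree-conf} gives $\dist_{\T}(W_t,o)=\hor(W_t)-2\,\hor(W_t\cf o)$, whence $T_t-V_t=-2(\log\qq)\,\hor(W_t\cf o)$. By Proposition~\ref{pro:Wtaun}, $W_t\to W_{\infty}\in\partial^*\T$ almost surely; continuity of the confluent map on $\wh\T\setminus\{\om\}$ then forces $W_t\cf o\to W_{\infty}\cf o$, a fixed vertex, so $\hor(W_t\cf o)\to\hor(W_{\infty}\cf o)\in\Z$. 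Hence $T_t-V_t$ converges to a finite limit, and (a) follows.

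For (b), I would assume $\ell(\al,\beta)<0$, so that $Y_t\to-\infty$ almost surely by Lemma~\ref{lem:LLN}; thus $\log\IM Z_t\to-\infty$ and $V_t=-\log\IM Z_t$ for large $t$. The right-hand inequality of \eqref{eq:hyp-conf} with $z=Z_t$ and $z'=\im$ (so $\log\IM\im=0$) gives $\bigl|H_t-(2\log\IM(Z_t\wedge\im)-\log\IM Z_t)\bigr|\le\log 4$, and therefore $H_t-V_t = H_t+\log\IM Z_t = 2\log\IM(Z_t\wedge\im)+E_t$ with $|E_t|\le\log 4$. By Proposition~\ref{pro:Ztaun}, $Z_t\to Z_{\infty}\in\partial^*\Hb=\R$ almost surely; continuity of $(z,z')\mapsto z\wedge z'$ on $(\wh\Hb\setminus\{\binfty\})^2$ yields $Z_t\wedge\im\to Z_{\infty}\wedge\im$, a point of $\Hb$ with finite, positive imaginary part. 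Hence $\log\IM(Z_t\wedge\im)$ remains bounded and $\limsup_t(H_t-V_t)<\infty$, proving (b).

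The routine calculations aside, the one point deserving care is recognizing that in each case the entire unbounded part of $\dist_{\HT}(X_t,\of)$ is captured by $H_t$ (resp. $T_t$), while the residual term is governed by the confluent height $\hor(W_t\cf o)$ (resp. by the apex height $\IM(Z_t\wedge\im)$). The hard part will thus not be the estimate itself but verifying that this residual term stabilizes, which it does precisely because the process converges almost surely to the boundary: the argument feeds the boundary convergence of Propositions~\ref{pro:Wtaun} and~\ref{pro:Ztaun} into the continuity of the confluent/apex maps, with Lemma~\ref{lem:LLN} supplying the sign of $Y_t$ that allows the absolute value in $V_t$ to be removed.
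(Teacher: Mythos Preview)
Your proof is correct and follows essentially the same approach as the paper's own argument: reduce via Proposition~\ref{pro:metric} to controlling $T_t-V_t$ in case~(a) and $H_t-V_t$ in case~(b), then use the boundary convergence of Propositions~\ref{pro:Wtaun} and~\ref{pro:Ztaun} together with continuity of the confluent maps $\cf$ and $\wedge$ to see that these residuals stabilize. Your write-up is in fact somewhat more explicit than the paper's, in that you spell out the $H_t,T_t,V_t$ bookkeeping and justify the non-negativity of the two differences (which the paper only states parenthetically).
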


\begin{proof}
(a) By Proposition \ref{pro:Wtaun}, $W_t \to W_{\infty} \in \partial^*\T$
almost surely. Therefore $\hor(o \cf W_t) \to \hor(o\cf W_{\infty})$
a.s., that is, the two (finite !) random variables coincide from some random 
$t_0$ onwards, 
and in particular $\hor(W_t) = Y_t\ge 0$ for $t \ge t_0\,$.
By \eqref{eq:tree-conf}, 
$$
\dist_{\T}(W_t,o) = \hor(W_t) - 2\hor(o\cf W_t) = Y_t - 2\hor(o\cf W_{\infty})
\quad  \text{for all}\; t \ge t_0\,,
$$
and for those $t$, the first inequality of Proposition \ref{pro:metric} yields
$$
\dist_{\HT}(X_t\,,\of) \le \dist_{\Hb}(Z_t\,,\im) - 2(\log\qq)\hor(o\cf W_{\infty}).    
$$
(We note here that $\hor(o\cf W_{\infty}) \le 0$.)  
This yields (a). 

\smallskip

(b) This time, we use Proposition \ref{pro:Ztaun} and get that 
$\im \wedge Z_t \to \im \wedge Z_{\infty} \in \Hb$ (a.s. convergence in $\wh\Hb$).  
Therefore, by \eqref{eq:hyp-conf}, 
$$
\limsup_{t \to \infty}
\Bigl| \dist_{\Hb}(Z_t\,,\im) - \Bigl( 2\log\bigl(\IM (\im \wedge Z_{\infty})\bigr)
 - \log \IM Z_t \Bigr) \Bigr|  < \infty \quad\text{almost surely.}
$$
Note that $\log \IM Z_t < 0$ if $t$ is sufficiently
large. Thus, in the same way as in (a), Proposition \ref{pro:metric} yields
statement (b).
\end{proof}

We now consider $(X_{\tau(n)})$.
The group $\Af$ acts transitively on the set $\LT$ defined in \eqref{eq:LT}
of all bifurcation lines in $\HT$. In part (2) of the proof of Theorem
\ref{thm:isogroup}, we have introduced the coordinates $[b,\ga]$ for the
elements of $\Af$. In the same way, it will be useful to use coordinates
$[x,v]$ for the elements of $\LT$, such that $[x,v]$ is the point on $\Lf_v$ 
with horizontal coordinate $x$, that is, $[x,v] = (x + \im \qq^{\hor(v)},v)$
in the notation of \eqref{eq:treebolicdef}. In these coordinates, the natural
$\Af$-invariant measure on $\LT$ is given by $d[x,v] =
q^{-\hor(v)}dx\,d_{\sharp}v\,$,
where $dx$ is standard Lebesgue measure and $d_{\sharp} v$ is the counting
measure on $V(\T)$. 

By (natural) abuse of notation, we also write $\pi^{\Hb}$ and $\pi^{\T}$ for
the projections $(g,\ga) \mapsto g$ and $(g,\ga) \mapsto \ga$ 
from $\Af$ onto $\Aff(\Hb,\qq)$ and $\Aff(\T)$, respectively.
This refers to the notation used 
in the statement of Theorem \ref{thm:isogroup},
while in the $[b,\ga]$-coordinates, $\pi^{\T}[b,\ga] = \ga$ and
$\pi^{\Hb}[b,\ga] 
=\bigl(\begin{smallmatrix}\qq^{\Phi(\ga)} & b\\ 0 & 1\end{smallmatrix}\bigr)$ 
as an affine transformation.

Now recall from \S \ref{sec:rw} that $(X_{\tau(n)})$ is a Markov chain on $\LT$
whose transition probabilities are given by the probability measure $\mu$, see
\eqref{eq:degmu}. By Proposition \ref{pro:dens}, $\mu$ has
a continuous density, which we denote by $f_{\mu}\,$, with respect to 
$d[x,v]$. The projection $\wt \mu$ also has a continuous density
on $\LH$ with respect to the $\Aff(\Hb,\qq)$-invariant measure which is analogous
to $d[x,v]$. Furthermore, we note that for $v \in V(\T)$,  
$$
\int_{\R} f_{\mu}[x,v]\,dx = p(o,v)\,,
$$
the transition probabilities of $(W_{\tau(n)})$ that appeared in Corollary 
\ref{cor:RW}(b). We now lift $\mu$ to a probability measure $\mmu$ on the 
group $\Af\,$: it has density $\fb$ with respect to the Haar measure  
\eqref{eq:HaarAf} on $\Af\,$, where
$
\fb[b,\ga] = f_{\mu}[b,\ga o]\,.
$

We then can construct (on a suitable probability space) 
a sequence $(\XX_n)_{n \ge 1}$ of i.i.d. $\Af$-valued random variables
with common distribution $\mmu$, and the associated right random walk on
$\Af$,
$$
\RR_n = \XX_1 \, \XX_2 \cdots \XX_n\,, \; n \ge 0\,.
$$
The product is of course taken in the group $\Af$, and $\RR_0$ is the identity
of that group. The (simple) proof of part (i) of the following lemma is 
omitted; it follows  \cite[Lemma 3.1]{W-israel}, 
see also \cite[p. 5, Remarque 6]{GKR}. Statements (ii)--(iv) are immediate
consequences.

\begin{lem}\label{lem:gRno} \emph{(i)}
For any $\gf \in \Af$, the sequence
$(\gf\RR_n\of)$ is a realisation of the induced random walk 
$(X_{\tau(n)})_{n\ge 0}$ on
$\LT$ starting at $\gf\of$. That is, it is an $\LT$-valued Markov chain 
with transition probabilities \eqref{eq:transprob}.
\\[5pt]
\emph{(ii)} Via the identification \eqref{eq:identif} of $\LH$ with
$\Aff(\Hb,\qq)$, the random walk $\pi^{\Hb}(\RR_n)$ is a realisation
of the process $(Z_{\tau(n)})$ on $\LH$ starting at $\im$. 
\\[5pt]
\emph{(iii)} $R_n = \pi^{\T}(\RR_n)$ is a right random walk on the group 
$\Aff(\T)$, and the process $(R_no)_{n \ge 0}$ is a realisation of the 
random walk $(W_{\tau(n)})_{n\ge 0}$ on (the vertex set of) $\T$ as 
described in Corollary \ref{cor:RW}(b), with $R_0o = o$.
\\[5pt]
\emph{(iv)} With $\Phi$ as in \eqref{eq:modularAffTp}, 
the sequence $\bigl(\Phi(R_n)\bigr)_{n\ge 0}$ is a realisation of
the random walk $(Y_{\tau(n)})_{n \ge 0}$ on $\Z$ as described in Corollary
\ref{cor:RW}(a), with starting point $0$.
\end{lem}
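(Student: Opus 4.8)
The plan is to establish part (i) via the standard realisation of a Markov chain on a homogeneous space by a group-valued random walk, and then to deduce (ii)--(iv) by pushing $(\RR_n)$ forward through the homomorphisms $\pi^{\Hb}$ and $\pi^{\T}$, so that the three routine parts reduce to the observation that these homomorphisms commute both with the formation of products and with the coordinate projections of $\HT$.

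For part (i), I would first record that $\LT$ is the homogeneous space $\Af/K$, where $K = \{\gf \in \Af : \gf\of = \of\}$ is the non-trivial compact stabiliser of the origin, and the orbit map $\pi:\gf \mapsto \gf\of$ is the quotient projection; the lift $\mmu$ is constructed precisely so that $\pi_*\mmu = \mu$. The point to verify is that the one-step kernel of $(\RR_n\of)$ coincides with the prescribed transition probability. Conditionally on $\RR_{n-1} = \gf$, so that the current position is $\wf = \gf\of \in \Lf_v$ with $v=\pi^{\T}(\gf)\,o$, the next position $\RR_n\of = \gf\,\XX_n\of$ has conditional law $\de_{\gf}*\mu$, since $\XX_n\of$ has law $\mu$ and is independent of the past; by \eqref{eq:degmu} this is exactly $\mu_{\wf}^{\Omega_v}$ of \eqref{eq:transprob}. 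What needs care is that this kernel depends on $\gf$ \emph{only} through $\wf=\gf\of$, so that $(\RR_n\of)$ is genuinely Markov on $\LT$: this holds because $\mu = \mu_{\of}^{\Omega}$ is $K$-invariant by \eqref{eq:inv-meas} (for $k\in K$ one has $k\of=\of$ and $k\Omega=\Omega$, giving $\de_k*\mu=\mu$), whence $\de_{\gf k}*\mu = \de_{\gf}*\mu$ for every $k\in K$. This descent is the content of the imported \cite[Lemma 3.1]{W-israel}, see also \cite[p.~5, Remarque 6]{GKR}.

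Part (ii) follows by applying the homomorphism $\pi^{\Hb}:\Af \to \Aff(\Hb,\qq)$ to $\RR_n = \XX_1\cdots\XX_n$: the image $\pi^{\Hb}(\RR_n)=\pi^{\Hb}(\XX_1)\cdots\pi^{\Hb}(\XX_n)$ is a right random walk whose increment law is $\pi^{\Hb}_*\mmu = \wt\mu$. Writing $\gf=(g,\ga)$ one has $\pi^{\Hb}(\gf\of)=g\im=\pi^{\Hb}(\gf)\im$, so under the identification \eqref{eq:identif} of $\LH$ with $\Aff(\Hb,\qq)$ the random walk $\pi^{\Hb}(\RR_n)$ corresponds to the point sequence $\pi^{\Hb}(\RR_n\of)$; by part (i) (with $\gf$ the identity) and Proposition \ref{pro:projections}, this realises $Z_{\tau(n)}=\pi^{\Hb}(X_{\tau(n)})$ started at $\im$. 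Parts (iii) and (iv) are entirely analogous: $R_n=\pi^{\T}(\RR_n)$ is a right random walk on $\Aff(\T)$ with increment law $\pi^{\T}_*\mmu$, and $R_no=\pi^{\T}(\RR_n\of)$ realises $(W_{\tau(n)})$ of Corollary \ref{cor:RW}(b); finally composing with the homomorphism $\Phi:\Aff(\T)\to\Z$ of \eqref{eq:modularAffTp} gives the $\Z$-valued random walk $\Phi(R_n)=\hor(R_no)=\hor(W_{\tau(n)})=Y_{\tau(n)}$, matching Corollary \ref{cor:RW}(a).

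The only genuine obstacle is the verification in part (i) that the group random walk descends to a well-defined Markov chain on $\LT$ with the correct kernel, i.e. the $K$-invariance of $\mu$ together with the identity $\de_{\gf}*\mu=\mu_{\gf\of}^{\Omega_v}$ from \eqref{eq:degmu}. Once this is in place --- and it is exactly the statement borrowed from the cited references --- the remaining parts are purely formal, using only that $\pi^{\Hb}$, $\pi^{\T}$ and $\Phi$ are homomorphisms and that they intertwine the orbit map $\gf\mapsto\gf\of$ with the coordinate projections of $\HT$.
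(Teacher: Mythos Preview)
Your proposal is correct and follows precisely the route the paper indicates: the paper omits the proof of part~(i), pointing to \cite[Lemma~3.1]{W-israel} and \cite[p.~5, Remarque~6]{GKR}, and declares (ii)--(iv) to be immediate consequences, which is exactly how you structure your argument. Your explicit verification of the $K$-invariance of $\mu$ via \eqref{eq:inv-meas} (using that any $k\in K$ fixes both $\of$ and $\Omega_o$) is the one point the paper leaves entirely to those references, and you have filled it in correctly.
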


\begin{thm}\label{thm:CLT-drift} If $\ell(\al,\beta) \ne 0$ and $\sigma^2$ is
as in Lemma \ref{lem:CLTY}
$$
\frac{1}{\sqrt t}\Bigl(\dist_{\HT}(X_t\,,\of) - t\, |\ell(\al,\beta)|\Bigr) 
\to N(0,\sigma^2) \quad \text{in law, as}\; t \to \infty\,.
$$
\end{thm}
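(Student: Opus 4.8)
The plan is to reduce the central limit theorem for $\dist_{\HT}(X_t,\of)$ to the one already obtained for the vertical coordinate $Y_t$ in Lemma~\ref{lem:CLTY}. The key step is to prove that, almost surely,
$$
\dist_{\HT}(X_t,\of) = (\log\qq)\,|Y_t| + R_t, \qquad R_t = o(\sqrt t),
$$
after which a Slutsky argument transfers the convergence of Lemma~\ref{lem:CLTY}: because $\ell(\al,\beta)\neq 0$ the coordinate $Y_t$ has a definite drift, so $|Y_t|$ exhibits the same asymptotic Gaussian fluctuations as $Y_t$, and the error $R_t/\sqrt t\to 0$ is negligible. Everything thus rests on controlling $R_t$, and here the sign of $\ell(\al,\beta)$ splits the analysis exactly as in Lemma~\ref{lem:reduce}.

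When $\ell(\al,\beta)>0$ I would invoke Lemma~\ref{lem:reduce}(a) to replace $\dist_{\HT}(X_t,\of)$ by $\dist_{\Hb}(Z_t,\im)$ up to an almost surely bounded term. By Proposition~\ref{pro:Ztaun} one has $Z_t\to\binfty$, while Proposition~\ref{pro:sideways} and Corollary~\ref{cor:inter} keep $|\RE Z_t|$ of subexponential size; since $\IM Z_t=\qq^{Y_t}$ grows exponentially, the geodesic from $\im$ to $Z_t$ is eventually monotone in height, so $\im\wedge Z_t=Z_t$ and \eqref{eq:hyp-conf} gives $\dist_{\Hb}(Z_t,\im)=\log\IM Z_t+O(1)=(\log\qq)\,Y_t+O(1)$. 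As $Y_t>0$ eventually, this is the required reduction with a \emph{bounded} remainder.

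When $\ell(\al,\beta)<0$ I would instead use Lemma~\ref{lem:reduce}(b) to replace $\dist_{\HT}(X_t,\of)$ by $(\log\qq)\,\dist_{\T}(W_t,o)$ up to a bounded term. By \eqref{eq:tree-conf} and $\hor(o)=0$,
$$
\dist_{\T}(W_t,o)=\hor(W_t)-2\hor(W_t\cf o)=|Y_t|+2\bigl(\hor(W_t)-\hor(W_t\cf o)\bigr),
$$
so the remainder is driven by the non-negative \emph{confluent overshoot} $\hor(W_t)-\hor(W_t\cf o)$. Since $\ell(\al,\beta)<0$ forces $W_t\to\om$ almost surely (Proposition~\ref{pro:Wtaun}), the down-drifting tree walk follows the $o$--$\om$ ray with only transient upward excursions; these have exponential tails, so the overshoot is tight and in particular $o(\sqrt t)$ almost surely, giving the reduction in this case as well.

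The two cases together establish the displayed reduction, and Lemma~\ref{lem:CLTY} combined with Slutsky's theorem then yields the asserted limit law $N(0,\sigma^2)$. The main obstacle, in my view, is precisely the estimation of $R_t$: one must simultaneously pass from the skeleton times $\tau(n)$ to continuous time — the random index $\nb_t$ of \eqref{eq:nt}, handled as in the proof of Lemma~\ref{lem:CLTY} — and show that the geometric overshoots, namely the apex $\im\wedge Z_t$ in the hyperbolic factor and the confluent $W_t\cf o$ in the tree factor, contribute nothing at the scale $\sqrt t$. Both points rely on the almost sure boundary convergence of Propositions~\ref{pro:Ztaun} and~\ref{pro:Wtaun} together with the exponential within-strip tail bounds of Proposition~\ref{pro:sideways} and Corollary~\ref{cor:inter}.
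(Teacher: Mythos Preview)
Your overall reduction strategy — replace $\dist_{\HT}(X_t,\of)$ by $(\log\qq)\,|Y_t|$ up to a remainder that is $o(\sqrt t)$, then invoke Lemma~\ref{lem:CLTY} — is the right one, and your treatment of Case~2 ($\ell(\al,\beta)<0$) is essentially correct: the confluent overshoot $\hor(W_t)-\hor(W_t\cf o)$ is bounded above by $Y_t-\min_{s\le t}Y_s$, which for a walk with negative drift is tight. The paper handles this case via the group–reversal argument of \cite{CKW}, but your direct bound on the overshoot achieves the same conclusion.

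Case~1, however, contains a real error. You claim that Proposition~\ref{pro:sideways} and Corollary~\ref{cor:inter} keep $|\RE Z_t|$ of subexponential size, so that $\IM Z_t=\qq^{Y_t}\gg|\RE Z_t|$ and hence $\im\wedge Z_t=Z_t$ eventually. This is false. Those results control the \emph{hyperbolic} increment $\dist_{\Hb}(Z_{\tau(n+1)},Z_{\tau(n)})$, which is $O(\log\log n)$; the corresponding Euclidean horizontal increment is of order $\IM Z_{\tau(n)}\sim\qq^{Y_{\tau(n)}}$, so $|\RE Z_{\tau(n)}|=\sum_{k\le n}\qq^{Y_{\tau(k-1)}}B_k$ grows at the \emph{same} exponential rate as $\IM Z_{\tau(n)}$. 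In fact, $|\RE Z_{\tau(n)}|/\IM Z_{\tau(n)}$ is a tight sequence that does not tend to~$0$ (it behaves like the tail of an a.s.\ convergent random series, with a continuous limiting law), so $\IM Z_t>|\RE Z_t|$ fails infinitely often and the geodesic from $\im$ to $Z_t$ is \emph{not} eventually monotone in height.

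The paper repairs this with a reversal trick: since $\dist_{\Hb}$ is bi-invariant under $\Aff(\Hb,\qq)$, one has $\dist_{\Hb}(Z_{\tau(n)},\im)=\dist_{\Hb}(Z_{\tau(n)}^{-1},\im)$, and after reordering the i.i.d.\ increments (equality in law) the process $Z^*_{\tau(n)}$ has \emph{negative} vertical drift and therefore converges a.s.\ to a random point $Z^*_\infty\in\partial^*\Hb=\R$. Then \eqref{eq:hyp-conf} applies with the apex $\im\wedge Z^*_{\tau(n)}\to \im\wedge Z^*_\infty\in\Hb$, giving
\[
\dist_{\Hb}(Z_{\tau(n)},\im)\;\simlaw\;(\log\qq)\,Y_{\tau(n)}\;+\;\text{(tight)}.
\]
This is the missing idea: you need to turn the upward drift into a downward one so that the apex stabilises. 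Without it, the $O(1)$ remainder you claim in Case~1 is not established, and your argument for that case does not go through.
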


\begin{proof} 
\emph{Case 1.}
$\;\ell(\al,\beta) > 0$. 
\\
Lemma \ref{lem:reduce}(a) tells us
 that we just have to consider  $\dist_{\Hb}(Z_t\,,\im)$. 
Using the same notation as before Proposition
\ref{pro:Ztaun}, we write $Z_{\tau(n)}^{-1}Z_{\tau(n)} 
=\bigl(\begin{smallmatrix}A_n & B_n \\ 0 & 1 \end{smallmatrix}\bigr)$ as
independent $\wt\mu$-distributed group elements of $\Aff(\Hb,\qq)$ for $n \ge 2$, 
as well as  $Z_{\tau(1)} =\bigl(\begin{smallmatrix}A_1 & B_1 \\ 0 & 1
                      \end{smallmatrix}\bigr)$, which is independent of the other ones (but
may have a different distribution, according to the starting point). 
The group inverses are 
$\bigl(\begin{smallmatrix}A_n & B_n \\ 0 & 1\end{smallmatrix}\bigr)^{-1}= 
\bigl(\begin{smallmatrix}1/A_n & \,-B_n/A_n \\ 0 & 1
                      \end{smallmatrix}\bigr)$. Since $A_n$ only takes values 
$\qq$ and $1/\qq$,  also $-B_n/A_n$ has exponential moments. 
Taking products in that group, 
$Z_{\tau(n)}= \bigl(\begin{smallmatrix}A_1 & B_1 \\ 0 & 1\end{smallmatrix}\bigr)\cdots 
 \bigl(\begin{smallmatrix}A_n & B_n \\ 0 & 1\end{smallmatrix}\bigr)$, so that  
$$
Z_{\tau(n)}^{-1} = \Bigl(\!\begin{smallmatrix}A_n & B_n \\[2pt] 0 & 1\end{smallmatrix}\!\Bigr)^{\!-1} 
\!\!\cdots 
\Bigl(\!\begin{smallmatrix}A_2 & B_2 \\[2pt] 0 & 1\end{smallmatrix}\!\Bigr)^{\!-1}
\Bigl(\!\begin{smallmatrix}A_1 & B_1 \\[2pt] 0 & 1\end{smallmatrix}\!\Bigr)^{\!-1}
\eqlaw 
\underbrace{\Bigl(\!\begin{smallmatrix}A_2 & B_2 \\[2pt] 0 & 1\end{smallmatrix}\!\Bigr)^{\!-1} \!\!\cdots 
\Bigl(\!\begin{smallmatrix}A_n & B_n \\[2pt] 0 & 1\end{smallmatrix}\!\Bigr)^{\!-1}
}_{\displaystyle =: Z^*_{\tau(n)}} 
\Bigl(\!\begin{smallmatrix}A_1 & B_1 \\[2pt] 0 & 1\end{smallmatrix}\!\Bigr)^{\!-1} \!\!. 
$$ 
Note that in $\R$, we have $A_1 \cdots A_n = \qq^{Y_{\tau(n)}}$.
Now
$(Z^*_{\tau_n})$ is again a right random walk on $\Aff(\Hb,\qq)$, and 
returning to the
identification with $\LH$, we have that
$$
\begin{gathered}
\dist_{\Hb}(Z^*_{\tau(n)}, Z^*_{\tau(n-1)})/n \to 0 \AND\\[3pt]
\frac{1}{n}\log \IM Z^*_{\tau(n)} = - \frac{(\log\qq)}{n} 
\bigl(Y_{\tau(n)} - Y_{\tau(1)}\bigr) \to -\log\qq\frac{\rha-1}{\rha+1}
\quad\text{almost surely}.
\end{gathered}
$$
Since the last limit is $< 0$, by Lemma \ref{lem:regH} our sequence 
is a.s. regular and converges to
a random variable $Z^*_{\infty} \in \partial^*\Hb = \R$ almost surely
in the topology of $\wh\Hb$. But then, using \eqref{eq:hyp-conf} as in
Lemma \ref{lem:reduce}, 
$$
\begin{aligned}
\dist_{\Hb}(Z_{\tau(n)},\im) = \dist_{\Hb}(Z_{\tau(n)}^{-1},\im) &\asymplaw
\dist_{\Hb}(Z^*_{\tau(n)},\im)\\
&\hspace*{7pt}\asymp \hspace*{6pt} 2 \log \IM( Z^*_{\infty} \wedge \im)
- \log \IM Z^*_{\tau(n)} \asymp Y_{\tau(n)}\,.
\end{aligned}
$$
where $\asymp$ means that the difference between the left and right hand
sides is bounded in absolute value.

Therefore, combining Lemma \ref{lem:reduce}(a) with Corollary \ref{cor:inter}, 
$$
\frac{1}{\sqrt{t}}\dist_{\HT}(X_t\,,o) \assim 
\frac{1}{\sqrt{t}}\dist_{\Hb}(Z_{\tau(\nn_t)}\,,\im) \simlaw
\frac{1}{\sqrt{t}} Y_{\tau(\nn_t)} 
\assim \frac{1}{\sqrt{t}}Y_t\,,
$$
as $t \to \infty$. Now Lemma \ref{lem:CLTY} yields the result, when
$\ell(\al,\beta) > 0$.
\\[5pt]
\emph{Case 2.} $\;\ell(\al,\beta) < 0$.
\\
Here, Lemma \ref{lem:reduce}(b) tells us that $\dist_{\HT}(X_t,\of)/\sqrt{t}$
behaves in law like $\dist_{\T}(W_t,o)/\sqrt{t}$ on $\T$, which in turn in view of 
Lemma \ref{lem:gRno}
behaves like $\dist_{\T}(R_{\nn_t}o,o)/\sqrt{t}$. One can proceed as in Case 1.
This time we can use the proof of the CLT for $(R_no)$ that is given in
\cite{CKW}; we get that 
$$
\dist_{\T}(R_{\nn_t}o,o)/\sqrt{t} \simlaw -\Phi(R_{\nn_t})/\sqrt{t} 
\sim -Y_t/\sqrt{t},$$  
and the result follows.
\end{proof}

The central limit theorem in the drift-free case requires some subtle 
input from \cite{Gr1}, \cite{Gr2}; it will be modelled after \cite{Ber}, which in
turn relies on \cite{CKW}. (In \cite{Ber}, the weak limit is incorrect, due
to a small error in the last step.)

We need standard Brownian motion $(\mathcal B_t)_{t \ge 0}$ on $\R$
starting at $0$, and the associated random variables 
$$
\ul{\mathcal M} = \min \{ \mathcal B_t : 0 \le t \le 1\}\,,\quad 
\ol{\mathcal M} = \max \{ \mathcal B_t : 0 \le t \le 1\}\,, \AND
\mathcal N = \mathcal B_1\,,
$$
so that $\mathcal N$ has standard normal distribution.

\begin{thm}\label{thm:CLT-driftfree} 
If $\ell(\al,\beta) = 0$ then
$$
\frac{1}{\sqrt t} \dist_{\HT}(X_t\,,\of) \to \frac{\log\qq}{\sqrt{\Ex(\tau)}}
\,\Bigl( 2\ol{\mathcal M} - 2\ul{\mathcal M} - |\mathcal N | \Bigr)\quad 
\text{in law, as\;}t \to \infty\,.
$$
\end{thm}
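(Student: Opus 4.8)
The plan is to push everything through the vertical level process $Y_{\tau(n)}$ and its running extrema. Writing $X_t=(Z_t,W_t)$ and recalling $Y_t=\pi^{\R}(X_t)=\log_{\qq}\IM Z_t=\hor(W_t)$, I would first apply Proposition \ref{pro:metric} with $\zf_1=X_t$, $\zf_2=\of$ (so that $\hor(o)=0$ and $\log\IM\im=0$) to obtain, up to an additive $O(1)$,
$$
\dist_{\HT}(X_t,\of)=\dist_{\Hb}(Z_t,\im)+(\log\qq)\,\dist_{\T}(W_t,o)-(\log\qq)\,|Y_t|\,.
$$
By Corollary \ref{cor:inter} each of the three terms changes by at most $D_{\nb_t}=O(\log\log \nb_t)=o(\sqrt t)$ between $\tau(\nb_t)$ and $t$, and $\nb_t/t\to 1/\Ex(\tau)$ a.s.; hence it suffices to analyse the right-hand side along the embedded times $\tau(n)$ and afterwards put $n=\nb_t$. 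The first-step term coming from $\tau(1)$ contributes only $O_{\Prob}(1)$ and is discarded. Thus the task reduces to expressing the two remaining distances through $Y_{\tau(n)}$ and the running extrema $\ol Y_n=\max_{0\le k\le n}Y_{\tau(k)}$ and $\ul Y_n=\min_{0\le k\le n}Y_{\tau(k)}$.

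For the tree term, \eqref{eq:tree-conf} gives $\dist_{\T}(W_{\tau(n)},o)=Y_{\tau(n)}-2\hor(o\cf W_{\tau(n)})$. Since $\rha=1$, Proposition \ref{pro:Wtaun} says $(W_{\tau(n)})\to\om$ a.s., so the confluent $o\cf W_{\tau(n)}$ descends to $\om$ along $\geo{o\,\om}$. A short excursion argument — every new minimum of the level is attained on $\geo{o\,\om}$ at the corresponding ancestral vertex of $o$, and by downward transience the walk climbs back above its deepest level by only $O_{\Prob}(1)$ — yields $\hor(o\cf W_{\tau(n)})=\ul Y_n+o_{\Prob}(\sqrt n)$; this is the simplest instance of the estimates of \cite{CKW}. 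Consequently $(\log\qq)\dist_{\T}(W_{\tau(n)},o)=(\log\qq)\bigl(Y_{\tau(n)}-2\ul Y_n\bigr)+o_{\Prob}(\sqrt n)$.

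The hyperbolic term is the heart of the matter, and here the tree analogy breaks down. By \eqref{eq:hyp-conf} one has $\dist_{\Hb}(Z_{\tau(n)},\im)=2(\log\qq)\,\hat Y_n-(\log\qq)Y_{\tau(n)}+O(1)$ with $\hat Y_n=\log_{\qq}\IM(Z_{\tau(n)}\wedge\im)$, but $Z_{\tau(n)}\wedge\im$ is the apex of a hyperbolic half-circle and is governed by the horizontal spread $|\RE Z_{\tau(n)}|$ rather than by any merging of vertical rays. I would nonetheless prove $\hat Y_n=\ol Y_n+o_{\Prob}(\sqrt n)$ via the following mechanism: at height $\qq^{k}$ the horizontal motion of $(Z_t)$ accumulates displacement of order $\qq^{k}$, so an excursion of the level up to its running maximum $\ol Y_n$ pushes $Z_{\tau(n)}$ far sideways, forcing $|\RE Z_{\tau(n)}|\gtrsim \qq^{\,\ol Y_n-O(1)}$ and hence $\hat Y_n\ge \ol Y_n-O(1)$. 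The reverse bound is easy: writing $\RE Z_{\tau(n)}=\sum_{k}\qq^{Y_{\tau(k-1)}}B_k$ and using the exponential moments of the $B_k$ from Corollary \ref{cor:mumom} gives $|\RE Z_{\tau(n)}|\le \qq^{\,\ol Y_n}O_{\Prob}(n)$, whence $\hat Y_n\le\ol Y_n+O(\log n)=\ol Y_n+o_{\Prob}(\sqrt n)$. I expect the lower bound to be the one genuine obstacle: it is an anti-concentration (no-cancellation) statement for the perturbed sum $\sum_{k}\qq^{Y_{\tau(k-1)}}B_k$ in terms of the running maximum of its exponent, which is exactly the circle of ideas of Grincevicius \cite{Gr1}, \cite{Gr2} exploited in \cite{Ber}, and the continuous density supplied by Proposition \ref{pro:dens} together with the sideways control of Proposition \ref{pro:sideways} should make it work. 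Granting it, $\dist_{\Hb}(Z_{\tau(n)},\im)=(\log\qq)\bigl(2\ol Y_n-Y_{\tau(n)}\bigr)+o_{\Prob}(\sqrt n)$.

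Adding the three contributions, the $\pm(\log\qq)Y_{\tau(n)}$ terms cancel and
$$
\dist_{\HT}(X_{\tau(n)},\of)=(\log\qq)\bigl(2\ol Y_n-2\ul Y_n-|Y_{\tau(n)}|\bigr)+o_{\Prob}(\sqrt n)\,.
$$
Finally I would invoke Donsker's invariance principle: for $\rha=1$ the walk $(Y_{\tau(n)})$ is symmetric nearest-neighbour on $\Z$ with unit step variance (indeed $\Var(Y)=1$ by Proposition \ref{pro:tau}(d)), so the rescaled paths $Y_{\tau(\lfloor ns\rfloor)}/\sqrt n$ converge in law to standard Brownian motion $(\mathcal B_s)$, jointly with the continuous functionals maximum, minimum and endpoint. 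Substituting $n=\nb_t$ with $\nb_t/t\to 1/\Ex(\tau)$ turns $\bigl(\ol Y_{\nb_t},\ul Y_{\nb_t},Y_{\tau(\nb_t)}\bigr)/\sqrt t$ into $\tfrac{1}{\sqrt{\Ex(\tau)}}\bigl(\ol{\mathcal M},\ul{\mathcal M},\mathcal N\bigr)$ in law, and the continuous mapping theorem gives
$$
\frac{1}{\sqrt t}\,\dist_{\HT}(X_t,\of)\ \tolaw\ \frac{\log\qq}{\sqrt{\Ex(\tau)}}\bigl(2\ol{\mathcal M}-2\ul{\mathcal M}-|\mathcal N|\bigr)\,,
$$
which is the asserted limit (note $2\ol{\mathcal M}-2\ul{\mathcal M}-|\mathcal N|\ge \ol{\mathcal M}-\ul{\mathcal M}\ge 0$, as it must be for a distance). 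The single delicate ingredient is the hyperbolic estimate $\hat Y_n=\ol Y_n+o_{\Prob}(\sqrt n)$; everything else is a combination of Proposition \ref{pro:metric}, the tree estimate of \cite{CKW}, and a routine two-time-scale Donsker argument.
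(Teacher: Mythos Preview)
Your proposal is essentially the paper's proof: decompose $\dist_{\HT}$ via Proposition~\ref{pro:metric}, reduce to the embedded chain at times $\tau(n)$, express the tree and hyperbolic distances as $Y_{\tau(n)}-2\ul Y_n$ and $2\ol Y_n-Y_{\tau(n)}$ respectively (up to $o_{\Prob}(\sqrt n)$, citing \cite{CKW} and \cite{Gr1},\cite{Gr2}), and finish with Donsker for the triple $(\ol Y_n,\ul Y_n,Y_{\tau(n)})$. The paper packages both key estimates slightly differently, by applying the group shifts $R_{\ul T(n)}^{-1}$ and $Z_{\ol\ttau(n)}^{-1}$ (where $\ol\ttau(n)=\tau(\ol T(n))$): after these shifts the two relevant points converge in law in $\wh\T\times\wh\T$, resp.\ $\wh\Hb\times\wh\Hb$, to independent $\partial^*$-valued random variables with continuous distributions, so their confluents are $O_{\Prob}(1)$; this yields your targets with the sharper error $O_{\Prob}(1)$ and avoids having to phrase the hyperbolic lower bound as an explicit anti-concentration estimate for $\sum_k\qq^{Y_{\tau(k-1)}}B_k$.

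One caveat: your tree heuristic (``by downward transience the walk climbs back above its deepest level by only $O_{\Prob}(1)$'') is not correct as stated---when $\rha=1$ the level process $Y_{\tau(n)}$ is recurrent simple random walk, so it certainly climbs $\asymp\sqrt n$ above its running minimum. The actual reason $\hor(o\cf W_{\tau(n)})-\ul Y_n=O_{\Prob}(1)$ is a branching effect: after the walk leaves $o_{-\ul Y_n}$ upward it enters one of $\pp$ subtrees, and the one containing $o$ is not distinguished, so with probability bounded away from $1$ it is avoided---this is precisely what the \cite{CKW} group-shift argument encodes. Since you already invoke \cite{CKW} for the rigorous statement, this is only a wording issue, not a gap.
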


\begin{proof} In the proof, we suppose that $(X_t)$ starts at $\of$, so that
$Z_0=\im$, $W_0=o$ and $Y_0 =0$. The passage to arbitrary starting point is
a simple exercise that we leave to the reader. 
By Corollary \ref{cor:inter} and \eqref{eq:nnt:t}, 
$$
\frac{1}{\sqrt t} \dist_{\HT}(X_t\,,\of) 
\sim \frac{1}{\sqrt t} \dist_{\HT}(X_{\tau(n_t)}\,,\of)
\sim \frac{1}{\Ex(\tau)}\frac{1}{\sqrt {n_t}} \dist_{\HT}(X_{\tau(n_t)}\,,\of)
\quad\text{almost surely,} 
$$
as $t \to \infty\,$. Thus, we want to show that 
$\dist_{\HT}(X_{\tau(n)}\,,\of)\big/\sqrt{n} \to (\log\qq) U_0$
in law, as $n \to \infty$. 
By Proposition \ref{pro:metric}, 
\begin{equation}\label{eq:decompo}
\frac{1}{\sqrt n} \dist_{\HT}(X_{\tau(n)}\,,\of) \sim
\frac{1}{\sqrt n} \dist_{\Hb}(Z_{\tau(n)}\,,\im) +  
\frac{\log\qq}{\sqrt n} \dist_{\T}(W_{\tau(n)}\,,o) -
\frac{\log\qq}{\sqrt n}\bigl|Y_{\tau(n)}\bigr|.
\end{equation}
By Lemma \ref{lem:gRno}, we can identify
$$
X_{\tau(n)} = \RR_n\of\,,\quad W_{\tau(n)} = R_no\,,\AND
Y_{\tau(n)} = \Phi(R_n).
$$
In the drift-free case, $\bigl(\Phi(R_n)\bigr)$ 
is nothing but classical simple random walk on $\Z$ starting at $0$. 
Define 
$$
\begin{gathered}
\ol M_n = \max \{ \Phi(R_k) : k = 0,\dots, n\} \AND
\ul M_n = \min \{ \Phi(R_k) : k = 0,\dots, n\}\,,\\ 
\ol T(n) = \max \{ k \le n : \Phi(R_k) = \ol M_n\} \AND 
\ul T(n) = \max \{ k \le n : \Phi(R_k) = \ul M_n\}
\end{gathered}
$$
It is well known that  
\begin{equation}\label{eq:inlaw}
\frac{1}{\sqrt n}\bigl(\,\ol M_n\,, \ul M_n\,, \Phi(R_n)\bigr) 
\to \bigl(\,\ol{\mathcal M}\,,\ul{\mathcal M}\,,\mathcal N\bigr) \quad\text{in law.}
\end{equation}
See e.g. {\sc Billingsley}~\cite[(9.2)+(9.8)]{Bi} for this result and
the joint distribution of the limiting triple, and {\sc Borodin 
and Salminen}~\cite[1.15.8(2) on page 174]{BoSa} for the joint distribution of 
$\bigl(\,\ol{\mathcal M} - \ul{\mathcal M}\,, \mathcal N  \bigr)$.

\smallskip

Each $Z_{\tau(n)}$ is an element of $\Aff(\Hb,\qq)$ and can be
inverted in that group. 
Since we are assuming that 
$Z_0 = \im$, all increments $Z_{\tau(n-1)}^{-1}Z_{\tau(n)}\,$, $n \ge 1$,
are i.i.d. and have the distribution $\wt\mu$ of \eqref{eq:wtmu}. The support
of $\wt\mu$ generates the whole of $\Aff(\Hb,\qq)$, and it has finite moments 
of exponential
order by Corollary \ref{cor:mumom}. We can now invoke the method and result of
\cite{Gr1}. Since its reformulation in our setting is not completely
transparent, we provide a brief ``translation''. \cite{Gr1} comprises the
following, where we set $\ol\ttau(n) = \tau\bigl(\ol T(n)\bigr)$.

\begin{itemize}
\item
The sequence of pairs of random variables  
$$
\bigl(Z_{\ol\ttau(n)}^{-1}\,,Z_{\ol\ttau(n)}^{-1}Z_{\tau(n)}\bigr)
$$
with values in $\Aff(\Hb,\qq) \times \Aff(\Hb,\qq)\equiv \LH \times \LH$
converges in law (i.e., weakly) in $\wh \Hb \times \wh\Hb$
to a pair of independent random variables $(Z^{\dag}, Z^{\ddag})$ with values in 
$\partial^*\Hb \times \partial^*\Hb \equiv \R^2$, both of which have continuous
distributions (in fact, continuous densities with respect to Lebesgue measure).
Thus, $Z^{\dag} \ne Z^{\ddag}$ with probability $1$, so that 
$$
Z_{\ol\ttau(n)}^{-1}\wedge Z_{\ol\ttau(n)}^{-1}Z_{\tau(n)}
\tolaw Z^{\dag} \wedge Z^{\ddag} \in \Hb\,.
$$ 
\end{itemize}

Note that
$$
\log \IM \bigl(Z_{\ol \ttau(n)}^{-1}\bigr) = -(\log \qq) \ol M_n \AND 
\log \IM \bigl(Z_{\ol \ttau(n)}^{-1}Z_{\tau(n)}\bigr) = 
(\log \qq) \bigl( \Phi(R_n) - \ol M_n\bigr).
$$
Using \eqref{eq:hyp-conf}, we get that
$$
\begin{aligned}
\dist_{\Hb}(Z_{\tau(n)}\,,\im) &=
\dist_{\Hb}\bigl(Z_{\ol\ttau(n)}^{-1}\,,\, Z_{\ol\ttau(n)}^{-1}Z_{\tau(n)}\bigr)\\ 
&\asymp 2 \log \IM(Z_{\ol\ttau(n)}^{-1} \wedge Z_{\ol\ttau(n)}^{-1}Z_{\tau(n)}) 
- \log \IM \bigl(Z_{\ol\ttau(n)}^{-1}\bigr)
-\log \IM \bigl(Z_{\ol\ttau(n)}^{-1}Z_{\tau(n)}\bigr)\\
&\hspace{-6pt}\simlaw \IM(Z^{\dag} \wedge Z^{\ddag}) + 
(\log \qq)\bigl( 2 \ol M_n - \Phi(R_n) \bigr)\,.
\end{aligned}
$$
On the tree, similarly to the above, the following is proved in \cite{CKW}.
\begin{itemize}
\item
The sequence of pairs of random variables  
$$
\bigl(R_{\ul T(n)}^{-1}o\,,R_{\ul T(n)}^{-1}R_n o\bigr)
$$
with values in $\T \times \T$
converges in law (i.e., weakly) in $\wh \T \times \wh\T$
to a pair of independent random variables $(R^{\dag}, R^{\ddag})$ with values in 
$\partial^*\T \times \partial^*\T $, both of which have continuous
distributions ($\equiv$ without point masses).
Thus, $R^{\dag} \ne R^{\ddag}$ with probability $1$, so that 
$$
R_{\ul T(n)}^{-1}o \cf R_{\ul T(n)}^{-1}R_n o
\tolaw R^{\dag} \cf R^{\ddag} \in \T\,.
$$ 
\end{itemize}
This time we use \eqref{eq:tree-conf}. Noting that
$$
\hor(R_{\ul T(n)}^{-1}o) = -\ul M_n \AND \hor(R_{\ul T(n)}^{-1}R_no) =
\Phi(R_n) - \ul M_n\,,
$$
we get
$$
\begin{aligned}
\dist_{\T}(R_no\,,o) &=
\dist_{\T}\bigl(R_{\ul T(n)}^{-1}o\,,\, R_{\ul T(n)}^{-1}R_no\bigr)\\ 
&= \hor(R_{\ul T(n)}^{-1}o) + \hor(R_{\ul T(n)}^{-1}R_no) 
- 2\hor(R_{\ul T(n)}^{-1}o\cf R_{\ul T(n)}^{-1}R_no)\\
&\hspace{-6pt}\simlaw  \Phi(R_n) - 2 \ul M_n-2 \hor(R^{\dag} \wedge R^{\ddag})\,. 
\end{aligned}
$$
Putting things together (which is legitimate because our discrete time processes
are all modelled via $\RR_n$ on the same probability space), we get from
\eqref{eq:decompo} 
$$
\frac{1}{\sqrt n} \dist_{\HT}(X_{\tau(n)}\,,\of) \simlaw
\frac{\log \qq}{\sqrt n} \bigl( 2 \ol M_n - 2 \ul M_n  - |\Phi(R_n)|\bigr)\,.  
$$
Now \eqref{eq:inlaw} yields the theorem.
\end{proof}

{\bf Acknowledgement.} We acknowledge helpful interaction with Sara Brofferio,
as well as valuable hints and corrections by the referee.



\projects{\noindent Partially supported by Austrian Science Fund projects FWF-P19115, 
 FWF-P24028 and W1230, by NAWI Graz, and by the ESF programme RGLIS.

  A. Bendikov was supported 
by the Polish Government Scientific Research Fund, Grant 2012/05/B/ST 1/00613 and by SFB 701 of 
the German Research Council DFG.}
\end{document}